\newcommand{\removelatexerror}{\let\@latex@error\@gobble}
\setlist{leftmargin=0pt,itemindent=15pt,labelwidth=8pt,labelsep=7pt,listparindent=15pt,topsep=2pt, itemsep = 2pt}
\title{\bf Distributed Zero-Order Algorithms for Nonconvex Multi-Agent Optimization%
}
\date{}
\author[1]{Yujie Tang}
\author[2]{Junshan Zhang}
\author[1]{Na Li}
\affil[1]{School of Engineering and Applied Sciences,
Harvard University}
\affil[2]{School of Electrical, Computer and Energy Engineering, Arizona State University}
\newtheorem{theorem}{Theorem}
\newtheorem{proposition}{Proposition}
\newtheorem{lemma}{Lemma}
\newtheorem{corollary}{Corollary}
\newtheorem{assumption}{Assumption}
\theoremstyle{definition}
\newtheorem{definition}{Definition}
\theoremstyle{remark}
\newtheorem{remark}{Remark}
\begin{document}

\maketitle

\begin{abstract}
Distributed multi-agent optimization finds many applications in distributed learning, control, estimation, etc. Most existing algorithms assume knowledge of first-order information of the objective and have been analyzed for convex problems. However, there are situations where the objective is nonconvex, and one can only evaluate the function values at finitely many points. In this paper we consider derivative-free distributed algorithms for nonconvex multi-agent optimization, based on recent progress in zero-order optimization. We develop two algorithms for different settings, provide detailed analysis of their convergence behavior, and compare them with existing centralized zero-order algorithms and gradient-based distributed algorithms.

\vspace{5pt}
\noindent{\bf Keywords: } Distributed optimization, nonconvex optimization, zero-order information
\end{abstract}

\section{Introduction}

Consider a set of $n$ agents connected over a network, each of which is associated with a smooth local objective function $f_i$ that can be nonconvex. The goal is to solve the optimization problem
$$
\min_{x\in\mathbb{R}^d}\ \ 
f(x)\coloneqq 
\frac{1}{n}\sum_{i=1}^n f_i(x)
$$
with the restriction that $f_i$ is only known to agent $i$ and each agent can exchange information only with its neighbors in the network during the optimization procedure. We focus on the situation where only zero-order information of $f_i$ is available to agent $i$.

\begin{table}[!t]
\aboverulesep = 1.2mm
\belowrulesep = 1.2mm
\renewcommand{\arraystretch}{1.25}
  \caption{Comparison of different algorithms for distributed optimization and zero-order optimization.}
  \label{tab:main_results}
    \centering\small
\begin{adjustbox}{width=\columnwidth,center}
    \begin{tabular}{>{\centering\arraybackslash}m{1.8 cm}>{\centering\arraybackslash}m{2.7cm}cc}
    \toprule
    & &  smooth & gradient dominated \\
    \midrule
    \multirow{6}{1.8cm}{\centering distributed zero-order (nonconvex)}
    & Alg. \ref{alg:2point}, this paper ($2$-point + DGD)
    & $\quad O\!\left(\!\sqrt{\dfrac{d}{m}}\log m\!\right)$
    & $O\!\left(\dfrac{d}{m}\right)$ \\
    \cmidrule{2-4}
    & Alg. \ref{alg:multipoint}, this paper ($2d$-point + gradient tracking)
    & $O\!\left(\dfrac{d}{m}\right)$
    & $O\!\left(\!\left[1-c\big(1-\rho^2\big)^2\left(\dfrac{\mu}{L}\right)^{\frac{4}{3}}\right]^{m/d}\right)$ 
    \\
    \cmidrule{2-4}
    & ZONE \cite{hajinezhad2019zone}
    & $O\left(\dfrac{\gamma(d)}{\sqrt{M}}\right)$
    & ---
    \\
    \midrule
    \multirow{5}{1.8cm}{\centering distributed first-order}
     & \multirow{3}{*}{\centering DGD} & $O\!\left(\dfrac{\log t}{\sqrt{t}}\right)$ \cite{chen2012fast,zeng2018nonconvex} (convex)
     & \multirow{3}{*}{\centering $O\!\left(\dfrac{ 1}{t}\right)$ \cite{olshevsky2019non} (strongly convex)} \\[8pt]
     & & $O\!\left(\!\dfrac{1}{\sqrt{T}}\!\right)$ \cite{lian2017can} (nonconvex)
     \\
    \cmidrule{2-4}
    & \pbox{2.3cm}{\relax\ifvmode\centering\fi gradient\\ tracking} &
    $O\!\left(\dfrac{1}{t}\right)$ \cite{scutari2019distributed} (nonconvex)&
    $O\!\left(\!\left[1 \!-\! c(1 \!-\! \rho)^2\!\left(\dfrac{\mu}{L}\right)^{\!\frac{3}{2}}\right]^{\!t}\right)$
    \cite{nedic2017achieving} (strongly convex) \\
    \midrule
    centralized zero-order& 
    \pbox{2.7cm}{\relax\ifvmode\centering\fi
    \cite{nesterov2017random} \\ ($2$-point estimator)}
    & $O\!\left(\dfrac{d}{m}\right)$ (nonconvex)
    & $O\!\left(\!\left[1\!-\!\dfrac{c}{d}\dfrac{\mu}{L}\right]^m\right)$ (strongly convex)\\
    \bottomrule
    \\[-8pt]
    \multicolumn{4}{l}{\scriptsize Note: The table summarizes best known convergence rates for deterministic nonconvex unconstrained optimization with 1) smooth, } \\[-6pt]
    \multicolumn{4}{l}{\scriptsize
    \phantom{Note: }2) gradient dominated objectives. The convex counterparts are listed if results for nonconvex cases have not been established.} \\[-4pt]
    \multicolumn{4}{l}{\scriptsize\phantom{Note: }$m$ denotes the number of function value queries, $t$ denotes the number of iterations, $d$ denotes the dimension of the decision} \\[-6pt]
    \multicolumn{4}{l}{\scriptsize\phantom{Note: }variable, $c$'s represent numerical constants that can be different for different algorithms.} \\[-4pt]
    \multicolumn{4}{l}{\scriptsize\phantom{Note: }$M$ denotes the total number of function value queries and $T$ denotes the total number of iterations provided before the}  \\[-6pt]
    \multicolumn{4}{l}{\scriptsize\phantom{Note: }optimization procedure. The rates in \cite{hajinezhad2019zone} and \cite{lian2017can} assume constant step sizes chosen based on $M$ or $T$.} \\[-4pt]
    \multicolumn{4}{l}{\scriptsize\phantom{Note: }The listed convergence rates are the ergodic rates of $\|\nabla f\|^2$ for the smooth case, and the objective error rates for the} \\[-4pt]
    \multicolumn{4}{l}{\scriptsize\phantom{Note: }gradient dominated case, respectively.} \\[-4pt]
    \multicolumn{4}{l}{\scriptsize\phantom{Note: }The rates provided in \cite{hajinezhad2019zone} do not include explicit dependence on $d$; we use $\gamma(d)$ to denote this dependence.} \\[-4pt]
    \multicolumn{4}{l}{\scriptsize\phantom{Note: }The cited results in this table may apply to more general settings (e.g., stochastic gradients \cite{lian2017can,olshevsky2019non}).} \\[-4pt]
    \multicolumn{4}{l}{\scriptsize\phantom{Note: }We do not include algorithms with Nesterov-type acceleration in this comparison.}
    \end{tabular}
\end{adjustbox}
\vspace{-10pt}
\end{table}

Distributed multi-agent optimization lies at the core of a wide range of applications, and a large body of literature has been contributed to distributed multi-agent optimization algorithms. One line of research combines (sub)gradient-based methods with a consensus/averaging scheme, where each iteration of a local agent consists of one or multiple consensus steps and a local gradient evaluation step. It has been shown that, for convex functions, the convergence rates of distributed gradient-based algorithms can match or nearly match those of centralized gradient-based algorithms. Specifically, \cite{nedic2009distributed,chen2012fast} proposed and analyzed consensus-based decentralized gradient descent (DGD) algorithms with $O(\log t/\sqrt{t})$ convergence for nonsmooth convex functions; \cite{shi2015extra,nedic2017achieving,qu2018harnessing} employed the \textit{gradient tracking} scheme and showed that the DGD with gradient tracking achieves $O(1/t)$ convergence for smooth convex functions and linear convergence for strongly convex functions; \cite{qu2019accelerated} employed Nesterov's gradient descent method and showed $O(1/t^{1.4-\epsilon})$ convergence for smooth convex functions and improved linear convergence for strongly convex functions where $\epsilon$ is an arbitrarily small positive number. Besides convergence rates, some works have additional focuses such as time-varying/directed graphs \cite{nedic2014distributed}, uncoordinated step sizes \cite{xu2015augmented}, stochastic (sub)gradient \cite{ram2010distributed}, etc.

While distributed convex optimization has broad applicability, nonconvex problems also appear in important applications such as distributed learning \cite{omidshafiei2017deep}, robotic networks \cite{charrow2014cooperative}, operation of wind farms \cite{marden2013model}, etc. Several works have considered nonconvex multi-agent optimization and developed various distributed gradient-based methods to converge to stationary points with convergence rate analysis, e.g., \cite{di2016next,lian2017can,zeng2018nonconvex,scutari2019distributed}. We notice that for smooth functions, either convex or nonconvex, in general DGD with \textit{gradient-tracking} converges faster than the method without gradient tracking, and its convergence rate has the same big-O dependence on the number of iterations as the centralized vanilla gradient descent method (See Table \ref{tab:main_results}).

Further, there has been increasing interest in zero-order optimization, where one does not have access to the gradient of the objective. Such situations can occur, for example, when only black-box procedures are available for computing the values of the functional characteristics of the problem, or when resource limitations restrict the use of fast or automatic differentiation techniques. Many existing works \cite{kiefer1952stochastic,flaxman2005online,bach2016highly,nesterov2017random,duchi2015optimal} on zero-order optimization are based on constructing gradient estimators using finitely many function evaluations, e.g., gradient estimator based on Kiefer-Wolfowitz scheme\cite{kiefer1952stochastic} by using $2d$-point function evaluations where $d$ is the dimension of the problem. However, this estimator does not scale up well with high-dimensional problems.
\cite{flaxman2005online} proposed and analyzed a single-point gradient estimator, and \cite{bach2016highly} further studied the convergence rate for highly smooth objectives. \cite{nesterov2017random} proposed two-point gradient estimators and showed that the convergence rates of the resulting algorithms are comparable to their first-order counterpart (See Table I). For instance, gradient descent with two-point gradient estimators converges with a rate of $O(d/m)$ where $m$ denotes the number of function value queries. \cite{duchi2015optimal} and \cite{shamir2017optimal} showed that two-point gradient estimators achieve the optimal rate $O(\sqrt{d/m})$ of stochastic zero-order convex optimization.

Some recent works have started to combine zero-order and distributed optimization methods \cite{hajinezhad2019zone,sahu2018distributed,yu2019distributed}. For example, \cite{hajinezhad2019zone} proposed the ZONE algorithm for stochastic nonconvex problems based on the method of multipliers. \cite{sahu2018distributed} proposed a distributed zero-order algorithm over random networks and established its convergence for strongly convex objectives. \cite{yu2019distributed} considered distributed zero-order methods for constrained convex optimization. However, there are still many questions remaining to be studied in distributed zero-order optimization. In particular, \textit{how do zero-order and distributed methods affect the performance of each other}, and \textit{could their fundamental structural properties be kept when combining the two?} For instance, it would be ideal if we could combine both $2$-point zero-order methods with DGD with gradient tracking and maintain the nice properties for both methods, leading to an ``optimal'' distributed zero-order algorithm if possible. This is unclear {\em a prior}, and indeed, as we shall show later, $2$-point gradient estimator and DGD with gradient tracking do not reconcile with each other well. 

\paragraph{Contributions.} Motivated by the above observations, we propose two distributed zero-order algorithms: Algorithm 1 is based on the $2$-point estimator and DGD; Algorithm 2 is based on the $2d$-point gradient estimator and DGD with gradient tracking. We analyze the performance of the two algorithms for deterministic nonconvex optimization, and compare their convergence rates with their distributed first-order and centralized zero-order counterparts. The convergence rates of the two algorithms are summarized in Table \ref{tab:main_results}. Specifically, it can be seen that the rates of Algorithm 1 are comparable with the first-order decentralized gradient descent but are inferior to the centralized zero-order method; the rates of Algorithm 2 are comparable with the centralized zero-order method and the first-order DGD with gradient tracking. On the other hand, Algorithm 1 uses the $2$-point gradient estimator that requires only $2$ function value queries, while Algorithm 2 employs the $2d$-gradient estimator whose computation involves $2d$ function value queries, indicating that Algorithm 1 could be favored for high-dimensional problems even though its convergence is slower asymptotically, while Algorithm \ref{alg:multipoint} could handle problems of relatively low dimensions better with faster convergence. These results shed light on how zero-order evaluations affect distributed optimization and how the presence of network structure affects zero-order algorithms. Different problems and different computation requirements would favor different integration of zero-order methods and distributed methods.

Compared to existing literature on distributed zero-order optimization, our Algorithm~1 is similar to the algorithms proposed in \cite{sahu2018distributed,yu2019distributed}, but our analysis assumes nonconvex objectives and also considers gradient dominated functions. While \cite{hajinezhad2019zone} analyzed the performance of the ZONE algorithm for unconstrained nonconvex problems, we shall see that our Algorithm~1 achieves comparable convergence behavior with ZONE-M, and Algorithm~2 converges faster than ZONE-M in the deterministic setting due to the use of the gradient tracking technique. A more detailed comparison will be given in Section~\ref{subsec:compare_other_algs}.

\paragraph*{Notation.}
We denote the $\ell_2$-norm of vectors and matrices by $\|\cdot\|$. The standard basis of $\mathbb{R}^d$ will be denoted by $\{e_k\}_{k=1}^d$. We let $\mathbf{1}_n\in\mathbb{R}^n$ denote the vector of all ones. We let $\mathbb{B}_d$ denote the closed unit ball in $\mathbb{R}^d$, and let $\mathbb{S}_{d-1}\coloneqq \{x\in\mathbb{R}^d:\|x\|=1\}$ denote the unit sphere. The uniform distributions over $\mathbb{B}_d$ and $\mathbb{S}_{d-1}$ will be denoted by $\mathcal{U}(\mathbb{B}_{d})$ and $\mathcal{U}(\mathbb{S}_{d-1})$. $I_d$ denotes the $d\times d$ identity matrix. For two matrices $A=[a_{ij}]\in\mathbb{R}^{p\times q}$ and $B=[b_{ij}]\in\mathbb{R}^{r\times s}$, their tensor product $A\otimes B$ is
$$
A\otimes B
=
\begin{bmatrix}
a_{11} B & \cdots & a_{1q} B \\
\vdots & \ddots &\vdots \\
a_{p1} B & \cdots & a_{pq} B
\end{bmatrix}
\in\mathbb{R}^{pr\times qs}.
$$

\section{Formulation and Algorithms}

\subsection{Problem Formulation}
Let $\mathcal{N} = \{1,2,\ldots,n\}$ be the set of agents. Suppose the agents are connected by a communication network, whose topology is represented by an undirected, connected graph $\mathcal{G}=(\mathcal{N},\mathcal{E})$ where the edges in $\mathcal{E}$ represent communication links.

Each agent $i$ is associated with a local objective function $f_i:\mathbb{R}^d\rightarrow\mathbb{R}$. The goal of the agents is to collaboratively solve the optimization problem
\begin{equation}\label{eq:main_problem}
\min_{x\in\mathbb{R}^d}\ \ 
f(x)\coloneqq 
\frac{1}{n}\sum_{i=1}^n f_i(x).
\end{equation}
We assume that at each time step, agent $i$ can only query the function values of $f_i$ at finitely many points, and can only communicate with its neighbors. Similar to \cite{nesterov2017random} and other works on zero-order optimization, we assume a deterministic setting where the queries of the function values are \emph{noise-free} and \emph{error-free}. The analysis of the deterministic setting will provide a baseline for extension to stochastic optimization which we leave as future work.

The following definitions will be useful later in the paper.
\begin{definition}
\begin{enumerate}
\item A function $f:\mathbb{R}^d\rightarrow\mathbb{R}$ is said to be \emph{$L$-smooth} if $f$ is continuously differentiable and satisfies
$$
\|\nabla f(x)-\nabla f(y)\|\leq L\|x-y\|,
\qquad
\forall x,y\in\mathbb{R}^d.
$$
\item A function $f:\mathbb{R}^d\rightarrow\mathbb{R}$ is said to be \emph{$G$-Lipschitz} if
$$
|f(x)-f(y)|\leq G\|x-y\|
\qquad
\forall x,y\in\mathbb{R}^d.
$$
\item A function $f:\mathbb{R}^d\rightarrow\mathbb{R}$ is said to be \emph{$\mu$-gradient dominated} if $f$ is differentiable, has a global minimizer $x^\ast$, and
$$
2\mu(f(x)-f(x^\ast)) \leq \|\nabla f(x)\|^2
\qquad
\forall x\in\mathbb{R}^d.
$$
\end{enumerate}
\end{definition}

The notion of gradient domination is also known as Polyak-{\L}ojasiewicz (PL) inequality, first introduced by \cite{polyak1963gradient} and \cite{lojasiewicz1963topological}. It can be viewed as a nonconvex analogy of strong convexity, as the centralized vanilla gradient descent achieves linear convergence for gradient dominated objective functions. The gradient domination condition has been frequently discussed in nonconvex optimization \cite{polyak1963gradient,karimi2016linear}. Also, nonconvex but gradient dominated objective functions appear in many applications, e.g., linear quadratic control problems \cite{fazel2018global} and deep linear neural networks \cite{shamir2018exponential}. 

\subsection{Preliminaries on Zero-Order and Distributed Optimization}\label{subsec:prelim}

We present some preliminaries to motivate our algorithm development.

\vspace{6pt}
\noindent \textbf{Zero-order optimization based on gradient estimation.} In zero-order optimization, one tries to minimize a function with the limitation that only function values at finitely many points may be obtained. One basic approach of designing zero-order optimization algorithms is to construct gradient estimators from zero-order information and substitute them for the true gradients. Here we introduce two types of zero-order gradient estimators for the noiseless setting:
\begin{enumerate} 
\item[i)] The \emph{$2d$-point gradient estimator} is given by
\begin{equation}\label{eq:multipoint_est}
\mathsf{G}^{(2d)}_f(x;u)
=
\sum_{k=1}^d\frac{f(x+ue_k)-f(x-ue_k)}{2u}e_k,
\end{equation}
where $u$ is some given positive number. Basically, it approximates the gradient $\nabla f(x)$ by taking finite differences along $d$ orthogonal directions, and can be viewed as a noise-free version of the classical Kiefer-Wolfowitz type method \cite{kiefer1952stochastic}. Given an $L$-smooth function $f:\mathbb{R}^d\rightarrow\mathbb{R}$, it can be shown that
\begin{equation*}
\|\mathsf{G}^{(2d)}_f(x;u)
-\nabla f(x)\|\leq \mfrac{1}{2}uL\sqrt{d}
\end{equation*}
for any $x\in\mathbb{R}^d$. The right-hand side decreases to zero as $u\rightarrow 0$. In other words, $\mathsf{G}^{(2d)}_f(x;u)$ can be arbitrarily close to $\nabla f(x)$ (as long as the finite differences can be evaluated accurately). One drawback of this estimator is that it requires $2d$ zero-order queries, which may not be computationally efficient for high-dimensional problems.

\item[ii)] The \textit{$2$-point gradient estimator} is given by
\begin{equation}\label{eq:2point_est}
\mathsf{G}^{(2)}_f(x;u,z)
\coloneqq 
d\cdot \frac{f(x+uz)-f(x-uz)}{2u}z,
\end{equation}
where $z\in\mathbb{R}^d$ is a random vector that is sampled from the distribution $\mathcal{U}(\mathbb{S}_{d-1})$, and $u>0$ is a given positive number. The following proposition indicates that when $z$ is uniformly sampled from the sphere $\mathbb{S}_{d-1}$, the expectation of $\mathsf{G}^{(2)}_f(x;u,z)$ is the gradient of a ``locally averaged'' version of $f$.
\begin{proposition}[{\hspace{1sp}\cite{flaxman2005online}}]\label{prop:basic_lemma_2point}
Suppose $f:\mathbb{R}^d\rightarrow\mathbb{R}$ is $L$-smooth. Then for any $u>0$ and $x\in\mathbb{R}^d$,
$$
\mathbb{E}_{z\sim\mathcal{U}(\mathbb{S}_{d-1})}
\!\left[\mathsf{G}^{(2)}_f(x;u,z)\right]
=\nabla f^u(x),
$$
where $f^u(x)\coloneqq \mathbb{E}_{y\sim\mathcal{U}(\mathbb{B}_d)}
\left[f(x+uy)\right]$.
\end{proposition}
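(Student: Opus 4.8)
The plan is to evaluate $\nabla f^u(x)$ in closed form and recognize it as $\mathbb{E}_{z\sim\mathcal{U}(\mathbb{S}_{d-1})}[\mathsf{G}^{(2)}_f(x;u,z)]$. Writing the ball average as an integral, $f^u(x) = \frac{1}{\mathrm{vol}(\mathbb{B}_d)}\int_{\mathbb{B}_d} f(x+uy)\,dy$, I would first justify differentiating under the integral sign: since $f$ is $L$-smooth its gradient is continuous and therefore bounded on the compact set $x+u\mathbb{B}_d$ uniformly for $x$ in a neighborhood, so dominated convergence applies and $\nabla f^u(x) = \frac{1}{\mathrm{vol}(\mathbb{B}_d)}\int_{\mathbb{B}_d}\nabla f(x+uy)\,dy$.

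The key step is to convert this ball integral of the gradient into a surface integral over the sphere via the divergence theorem. Applying it component-wise to the vector field $y\mapsto f(x+uy)e_j$, whose divergence is $u\,\partial_j f(x+uy)$, and using that the outward unit normal on $\mathbb{S}_{d-1}$ at $y$ is $y$ itself, gives $u\int_{\mathbb{B}_d}\partial_j f(x+uy)\,dy = \int_{\mathbb{S}_{d-1}} f(x+uy)\,y_j\,dS(y)$. Assembling the components yields $\nabla f^u(x) = \frac{1}{u\,\mathrm{vol}(\mathbb{B}_d)}\int_{\mathbb{S}_{d-1}} f(x+uy)\,y\,dS(y)$. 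Rewriting the surface integral as an expectation over $\mathcal{U}(\mathbb{S}_{d-1})$ introduces the factor $\mathrm{area}(\mathbb{S}_{d-1})/\mathrm{vol}(\mathbb{B}_d)$, which equals $d$ by a direct Gamma-function computation, so that $\nabla f^u(x) = \frac{d}{u}\mathbb{E}_{z\sim\mathcal{U}(\mathbb{S}_{d-1})}[f(x+uz)\,z]$.

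It then remains to match this expression with the symmetric two-point estimator. Expanding $\mathbb{E}_z[\mathsf{G}^{(2)}_f(x;u,z)] = \frac{d}{2u}(\mathbb{E}_z[f(x+uz)z] - \mathbb{E}_z[f(x-uz)z])$ and exploiting that $\mathcal{U}(\mathbb{S}_{d-1})$ is invariant under $z\mapsto -z$, the second expectation equals $-\mathbb{E}_z[f(x+uz)z]$, so the two terms combine into exactly $\frac{d}{u}\mathbb{E}_z[f(x+uz)z] = \nabla f^u(x)$, which is the claim.

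I expect the main obstacle to be the careful application of the divergence theorem together with the bookkeeping of normalizing constants — in particular verifying the identity $\mathrm{area}(\mathbb{S}_{d-1}) = d\,\mathrm{vol}(\mathbb{B}_d)$ and tracking the factor of $u$ produced by the chain rule. The remaining ingredients (differentiation under the integral and the symmetrization step) are routine once $L$-smoothness is invoked.
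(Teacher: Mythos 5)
Your proof is correct. Note that the paper itself does not prove this proposition: both in the statement and in Lemma~\ref{lemma:alg1:bounds_func} it simply invokes \cite[Lemma 1]{flaxman2005online} (the one-point version $\nabla f^u(x)=\frac{d}{u}\mathbb{E}_z[f(x+uz)z]$) together with the symmetry of $\mathcal{U}(\mathbb{S}_{d-1})$ under $z\mapsto -z$. Your argument---differentiation under the integral sign, the divergence theorem applied componentwise to $y\mapsto f(x+uy)e_j$ with the identity $\mathrm{area}(\mathbb{S}_{d-1})=d\,\mathrm{vol}(\mathbb{B}_d)$, and then the $z\mapsto -z$ symmetrization to pass from the one-point to the two-point estimator---is exactly the standard proof of that cited lemma plus the same symmetry step, so you have in effect supplied the self-contained proof the paper outsources to the reference.
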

It has been shown in \cite{nesterov2017random} that if we substitute $\mathsf{G}^{(2)}_f(x;u,z)$ for the gradient in the gradient descent algorithm, we have
$$
\frac{1}{t}\sum_{\tau=0}^{t-1}\|\nabla f(x_\tau)\|^2
=O\!\left(\frac{d}{m}\right)
$$
for nonconvex smooth objectives, and
$$
f(x_\tau)-f^\ast = O\!\left(\!\left[1-c\,\frac{\mu/L}{d}\right]^{\!m}\right)
$$
for smooth and strongly convex objectives, where $x_\tau$ denotes the $\tau$'th iterate and $m$ denotes the number of zero-order queries in $t$ iterations (see Table \ref{tab:main_results}). These rates are comparable to the rates of the (centralized) vanilla gradient descent method, i.e., $O(1/t)$ for nonconvex smooth objectives and linear convergence for smooth and strongly convex objectives.
\end{enumerate}

\vspace{6pt}
\noindent \textbf{Distributed optimization.} In this paper, we mainly focus on consensus-based algorithms for distributed optimization, where each agent maintains a local copy of the global variables, and weighs its neighbors’ information to updates its own local variable. Specifically, for a time-invariant and bidirectional communication network, we introduce 
a consensus matrix $W=[W_{ij}]\in\mathbb{R}^{n\times n}$ that satisfies the following assumption:
\begin{assumption}\label{assumption:consensus_weight}
\begin{enumerate}
\item $W$ is a doubly stochastic matrix.
\item $W_{ii}>0$ for all $i\in\mathcal{N}$, and for two distinct agents $i$ and $j$, $W_{ij}> 0$ if and only if $(i,j)\in\mathcal{E}$.
\end{enumerate}
\end{assumption}
When Assumption \ref{assumption:consensus_weight} is satisfied, we have \cite{qu2018harnessing}
\begin{equation}\label{eq:def_rho_net}
\rho \coloneqq 
\left\|W-n^{-1}\mathbf{1}_n\mathbf{1}_n^\top\right\|<1.
\end{equation} 
We present two consensus-based algorithms that will serve as the basis for designing distributed zero-order algorithms.
\begin{enumerate}
\item[i)] The \emph{decentralized gradient descent (DGD)} algorithm \cite{nedic2009distributed,chen2012fast} is given by the following iterations:
\begin{equation}\label{eq:DGD}
x^i(t)=\sum_{j=1}^n W_{ij}x^j(t-1)-\eta_t\nabla f_i(x^i(t-1)),
\end{equation}
where $x^i(t)\in\mathbb{R}^d$ denotes the local copy of the decision variable for the $i$'th agent, and $\eta_t$ is the step size. It has been shown that DGD in general converges more slowly than the centralized gradient descent algorithm \cite{chen2012fast,qu2018harnessing} for smooth functions. This is because the local gradient $\nabla f_i$ does not vanish at the stationary point, and a diminishing step size $\eta_t$ is necessary, which slows down the convergence.

\vspace{3pt}
\item[ii)] The \emph{DGD gradient tracking} method incorporates additional local variables $s^i(t)$ to track the global gradient $\nabla f=\frac{1}{n}\sum_{i}\nabla f_i$:
$$
\begin{aligned}
s^i(t) &=\sum_{j=1}^n W_{ij}s^j(t\!-\!1) + \nabla f_i(x^i(t\!-\!1))-\nabla f_i(x^i(t\!-\!2)), \\
x^i(t) &=\sum_{j=1}^n W_{ij}x^j(t\!-\!1)-\eta_t s^i(t),
\end{aligned}
$$
where we set $s^i(0)=\nabla f_i(x^i(0))$ for each $i$. Since gradient tracking has been proposed, it has attracted much attention and inspired many recent studies \cite{xu2015augmented, di2016next,nedic2017achieving,qu2018harnessing,scutari2019distributed}, as it can accelerate the convergence for smooth objectives compared to DGD. Here we provide a high level explanation of how gradient tracking works: For smooth functions, when $x^i(t)$ approaches consensus,  $\nabla f_i(x^i(t))$ will not change much because of the smoothness, and therefore the local variables $s^i(t)$ will eventually reach a consensus; on the other hand, by induction it can be shown that
$$
\frac{1}{n}\sum_{i=1}^n s^i(t)
=\frac{1}{n}\sum_{i=1}^n \nabla f_i(x^i(t)).
$$
Therefore, the sequence $(s^i(t))_{t\in\mathbb{N}}$ will eventually converge to the global gradient, and a constant stepsize $\eta_t=\eta$ is allowed, leading to comparable convergence rates as the centralized gradient methods.  See \cite[Section III and Section IV.B]{qu2018harnessing} for more discussion.
\end{enumerate}

\subsection{Our Algorithms}

Following the previous discussions, it would be ideal if we can combine the $2$-point gradient estimator and the DGD with gradient tracking and maintain a convergence rate comparable to the centralized vanilla gradient descent method. However, it turns out that such combination does not lead to the desired convergence rate. This is mainly because gradient tracking requires increasingly accurate local gradient information as one approaches the stationary point to achieve faster convergence compared to DGD, whereas the $2$-point gradient estimator can produce a variance that does not decrease to zero even if the radius $u$ decreases to zero; a more detailed explanation will be provided in Section \ref{subsec:compare}.  

We propose the following two distributed zero-order algorithms for the problem \eqref{eq:main_problem}:\footnote{
For both algorithms we employ the \emph{adapt-then-combine} (ATC) strategy \cite{sayed2014diffusion}, a commonly used variant for consensus optimization which is slightly different from the combine-then-adapt (CTA) strategy in \eqref{eq:DGD}. Both ATC and CTA can be used in our algorithms, and the convergence results will be similar.}

\begin{figure}[h]
\vspace{-6pt}
{
\removelatexerror
\begin{algorithm}[H]
\SetAlgoNoLine
\DontPrintSemicolon
\For{$t= 1,2,3,\ldots$}{
\ForEach{$i\in\mathcal{N}$}{
\begin{enumerate}[topsep=2pt,itemsep=5pt,rightmargin=45pt]
\item Generate $z^i(t)\sim\mathcal{U}(\mathbb{S}_{d-1})$ independently from $(z^i(\tau))_{\tau=1}^{t-1}$ and $(z^j(\tau))_{\tau=1}^t$ for $j\neq i$.\;
\item Update $x^i(t)$ by
\begin{align}
g^i(t) &=
\mathsf{G}_{f_i}^{(2)}(x^i(t-1); u_t,z^i(t)), \\
x^i(t) &=
\sum_{j=1}^n W_{ij}(x^j(t-1)
-\eta_t g^j(t)).
\label{eq:alg1_iteration}
\end{align}
\vspace{-12pt}
\end{enumerate}
}
}
\caption{2-point gradient estimator without global gradient tracking \label{alg:2point}}
\end{algorithm}
\vspace{4pt}
\begin{algorithm}[H]
\SetAlgoNoLine
\DontPrintSemicolon
Set $s^i(0)=g^i(0)=0$ for each $i\in\mathcal{N}$. \\
\For{$t= 1,2,3,\ldots$}{
\ForEach{$i\in\mathcal{N}$}{
\begin{enumerate}[topsep=2pt,itemsep=5pt,rightmargin=45pt]
\item Update $s^i(t)$ by
\begin{align}
g^i(t)
& =
\mathsf{G}^{(2d)}_{f_i}
(x^i(t-1);u_t), \\
s^i(t) &=\!\sum_{j=1}^n W_{ij} \!\left( s^j(t\!-\!1) \!+\! g^j(t) \!-\! g^j(t\!-\!1)\right)\!.
\end{align}

\item Update $x^i(t)$ by
\vspace{-5pt}
\begin{equation}
x^i(t) =
\sum_{j=1}^n W_{ij}(x^j(t-1)
-\eta s^j(t)).
\phantom{XXX}
\end{equation}
\vspace{-8pt}
\end{enumerate}
}
}
\caption{$2d$-point gradient estimator with global gradient tracking \label{alg:multipoint}}
\end{algorithm}
}
\vspace{-4pt}
\end{figure}

\begin{enumerate}
\item Algorithm \ref{alg:2point} employs the $2$-point gradient estimator \eqref{eq:2point_est}, and adopts the consensus procedure of the DGD algorithm that only involves averaging over the local decision variables.
\item Algorithm \ref{alg:multipoint} employs the $2d$-point gradient estimator \eqref{eq:multipoint_est}, and adopts the consensus procedure of the gradient tracking method where the auxiliary variable $s^i(t)$ is introduced to track the global gradient $\nabla f=\frac{1}{n}\sum_i\nabla f_i$. We shall see in Theorems \ref{theorem:alg2_non_grad_dom} and \ref{theorem:alg2_grad_dom} that $s^i(t)$ converges to the gradient of the global objective function as $t\rightarrow\infty$ under mild conditions.
\end{enumerate}

\section{Main Results}

In this section we present the convergence results of our algorithms. The proofs are postponed to the Appendix.

\subsection{Convergence of Algorithm \ref{alg:2point}}

Let $x^i(t)$ denote the sequence generated by Algorithm \ref{alg:2point} with a positive, non-increasing sequence of step sizes $\eta_t$. Denote
$$
\bar x(t)\coloneqq \frac{1}{n}\sum_{i=1}^n x^i(t),
\qquad
R_0\coloneqq \frac{1}{n}\sum_{i=1}^n\|x^i(0)-\bar x(0)\|^2.
$$

We first analyze the case with general nonconvex smooth objective functions.
\begin{theorem}\label{theorem:alg1_non_grad_dom}
Assume that each local objective function $f_i$ is uniformly $G$-Lipschitz and $L$-smooth for some positive constants $G$ and $L$, and that
$f^\ast\coloneqq \inf_{x\in\mathbb{R}^d}f(x)>-\infty$.
\begin{enumerate}
\item Suppose $\eta_1 L\leq 1/4$, $\sum_{t=1}^\infty \eta_t=+\infty$, $\sum_{t=1}^\infty \eta_t^2<+\infty$, and $\sum_{t=1}^\infty \eta_t u_t^2<+\infty$. Then almost surely, $\|x^i(t)-\bar x(t)\|$ converges to zero for all $i\in\mathcal{N}$, $\nabla f(\bar x(t))$ converges to zero, and $\lim_{t\rightarrow\infty}f(\bar x(t))$ exists.


\item Suppose that
\begin{equation*}
\eta_t=\frac{\alpha_\eta }{4L\sqrt{d}}\cdot\frac{1}{\sqrt{t}},\qquad u_t\leq \frac{\alpha_u G}{L\sqrt{d}}\cdot \frac{1}{t^{\gamma/2-1/4}}
\end{equation*}
with $\alpha_\eta\in(0,1]$, $\alpha_u\geq 0$ and $\gamma>1$. Then almost surely, $\|x^i(t)-\bar x(t)\|$ converges to zero for all $i$, and $
\liminf_{t\rightarrow\infty}\|\nabla f(\bar x(t))\|=0$.
Furthermore, we have
\begin{equation}\label{eq:alg1_bound_grad_2}
\begin{aligned}
\frac{\sum_{\tau=0}^{t-1}\!
\eta_{\tau+1} \mathbb{E}\!\left[\|\nabla f(\bar x(\tau))\|^2\right]}
{\sum_{\tau=0}^{t-1}
\eta_{\tau+1}}
\leq\ &
\sqrt{\frac{d}{t}}
\Bigg[
\frac{\alpha_\eta G^2}{3n^2}\ln(2t \!+\! 1)
\!+\!
\frac{8L(f(\bar x(0)) \!\!-\!\! f^\ast)}{\alpha_{\eta}}
\!+\!
\frac{6R_0 L^2 }{(1 \!-\! \rho^2)\sqrt{d}}
\\
&\qquad\qquad
+
\frac{9\alpha_\eta^2\kappa^2\rho^2 G^2}{4(1 \!-\! \rho^2)^2\sqrt{d}}
+\frac{9\alpha_u^2\gamma G^2}{4(\gamma \!-\! 1)}
\Bigg]
+o\!\left(\!\frac{1}{\sqrt{t}}\!\right),
\end{aligned}
\end{equation}
where $\kappa$ is some positive numerical constant, and
\begin{equation}\label{eq:alg1_consensus_2}
\frac{1}{n}\sum_{i=1}^n
\mathbb{E}\!\left[\|x^i(t)-\bar x(t)\|^2\right]
\leq \frac{\alpha_\eta^2\kappa^2\rho^2}{4(1-\rho^2)^2}\frac{G^2/L^2}{t}+o(t^{-1}).
\end{equation}
\end{enumerate}
\end{theorem}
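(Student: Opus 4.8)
The plan is to reduce everything to tracking the network average $\bar x(t)=\frac1n\sum_i x^i(t)$ together with the consensus error $\frac1n\sum_i\|x^i(t)-\bar x(t)\|^2$. Because $W$ is doubly stochastic, averaging the update \eqref{eq:alg1_iteration} over $i$ annihilates the mixing matrix and yields the perturbed centralized recursion $\bar x(t)=\bar x(t-1)-\eta_t\bar g(t)$ with $\bar g(t)=\frac1n\sum_i g^i(t)$. I would therefore set up two coupled recursions, a descent recursion for $\mathbb{E}[f(\bar x(t))]$ and a contraction recursion for the consensus error, and close them against each other.

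For the descent recursion I would apply the $L$-smoothness descent inequality to $f$ along $\bar x(t)=\bar x(t-1)-\eta_t\bar g(t)$ and take the conditional expectation given the past. By Proposition \ref{prop:basic_lemma_2point} the conditional mean of each $g^i(t)$ is $\nabla f_i^{u_t}(x^i(t-1))$, so $\mathbb{E}[\bar g(t)]=\frac1n\sum_i\nabla f_i^{u_t}(x^i(t-1))$; I would compare this with $\nabla f(\bar x(t-1))$ by splitting off (i) the smoothing bias $\|\nabla f_i^{u_t}-\nabla f_i\|\le Lu_t$ and (ii) the consensus bias $\|\nabla f_i(x^i(t-1))-\nabla f_i(\bar x(t-1))\|\le L\|x^i(t-1)-\bar x(t-1)\|$. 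The cross term $-\eta_t\langle\nabla f(\bar x(t-1)),\mathbb{E}\bar g(t)\rangle$ then produces the main descent $-\tfrac{\eta_t}{2}\|\nabla f(\bar x(t-1))\|^2$ plus these two bias contributions, while the quadratic term $\tfrac{L}{2}\eta_t^2\,\mathbb{E}\|\bar g(t)\|^2$ is split into $\|\mathbb{E}\bar g(t)\|^2$, absorbed into the descent using $\eta_1L\le1/4$ so that the net $\|\nabla f\|^2$ coefficient stays strictly negative, and the pure variance $\tfrac1{n^2}\sum_i\mathrm{tr}\,\mathrm{Var}(g^i(t))$. The variance is where the $2$-point estimator matters: using $\|g^i(t)\|\le dG$ (from $G$-Lipschitzness and $\|z\|=1$) together with $\mathbb{E}_z[(z^\top v)^2]=\|v\|^2/d$ gives $\mathbb{E}\|g^i(t)\|^2\lesssim dG^2$, a bound that does not vanish as $u_t\to0$.

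For the consensus error I would write the update in stacked form $X(t)=W(X(t-1)-\eta_tG(t))$ and use $\|W-n^{-1}\mathbf{1}_n\mathbf{1}_n^\top\|=\rho<1$ from \eqref{eq:def_rho_net} to obtain the contraction $\|X_\perp(t)\|\le\rho(\|X_\perp(t-1)\|+\eta_t\|G(t)\|)$, with $\|G(t)\|$ controlled by the same $dG$-type bound. Unrolling the geometric recursion and using that $\eta_t\propto t^{-1/2}$ is non-increasing yields \eqref{eq:alg1_consensus_2}, whose $1/(1-\rho^2)^2$ factor comes from squaring the convolution $\sum_\tau\rho^{t-\tau}\eta_\tau$; here the scaling $\eta_t\propto 1/\sqrt d$ is exactly what cancels the dimension, leaving a $d$-free $G^2/L^2$ coefficient. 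Substituting this into the descent recursion, summing $\tau=0$ to $t-1$, telescoping $f(\bar x(0))-f^\ast$, and dividing by $\sum_\tau\eta_{\tau+1}=\Theta(\sqrt{t/d})$ then produces \eqref{eq:alg1_bound_grad_2}: the $\ln(2t+1)$ term is $\sum_\tau\eta_{\tau+1}^2=\Theta(\ln t)$ against the non-vanishing variance, the $R_0/(1-\rho^2)$ term is the transient from initial disagreement, the $\rho^2/(1-\rho^2)^2$ term is the summed steady-state consensus error, and the $\gamma/(\gamma-1)$ term is $\sum_\tau\eta_{\tau+1}u_{\tau+1}^2=\Theta(\sum_\tau\tau^{-\gamma})$ from the smoothing bias.

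For Part 1 I would instead build a Lyapunov function $V(t)=f(\bar x(t))-f^\ast$ plus a multiple of the consensus error and verify, from the same one-step inequalities, that $\mathbb{E}[V(t)\mid\mathcal{F}_{t-1}]\le V(t-1)-c\,\eta_t\|\nabla f(\bar x(t-1))\|^2+a_t$ with $\sum_t a_t<\infty$, the summability coming from $\sum_t\eta_t^2<\infty$ and $\sum_t\eta_tu_t^2<\infty$. The Robbins--Siegmund supermartingale convergence theorem then gives that $V(t)$ (hence $\lim_t f(\bar x(t))$) converges almost surely and $\sum_t\eta_t\|\nabla f(\bar x(t))\|^2<\infty$ a.s.; combined with $\sum_t\eta_t=\infty$ this forces $\liminf_t\|\nabla f(\bar x(t))\|=0$, and since $\bar x$ moves by at most $\eta_t\|\bar g(t)\|\le\eta_t dG\to0$ per step the increments of $\|\nabla f(\bar x(t))\|$ vanish, upgrading $\liminf$ to $\lim=0$. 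Almost-sure consensus $\|x^i(t)-\bar x(t)\|\to0$ follows from the contraction recursion since $\eta_t\to0$ and $\|g^i(t)\|\le dG$ a.s. The main obstacle throughout is the non-vanishing variance of the $2$-point estimator: the whole argument hinges on choosing $\eta_t$ small enough (the condition $\eta_1L\le1/4$) that the absorbed $\|\mathbb{E}\bar g\|^2$ never overwhelms the descent, while simultaneously routing the estimator variance, the consensus error, and the smoothing bias into the summable or logarithmic remainder terms with the correct, carefully balanced dependence on $d$, $n$, and $\rho$.
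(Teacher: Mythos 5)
Your overall architecture is essentially the paper's own proof: the averaged recursion $\bar x(t)=\bar x(t-1)-\eta_t\bar g(t)$, the conditional descent inequality with separate smoothing-bias ($\leq Lu_t$) and consensus-bias terms (Lemma~\ref{lemma:alg1:basic_ineq}), the second-moment bound on $\bar g(t)$ whose non-vanishing $O(dG^2/n^2)$ variance term comes from the independence of the $z^i(t)$ across agents (Lemma~\ref{lemma:alg1:gt_square}), the $\rho$-contraction of the stacked consensus error unrolled into a convolution (Lemma~\ref{lemma:alg1:consensus_error} and Corollary~\ref{corollary:alg1:converge_consensus}), Robbins--Siegmund for Part 1, and telescoping plus normalization by $\sum_\tau\eta_{\tau+1}$ for Part 2. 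However, there is one genuine gap, in the final step of Part 1. You assert that because each step moves $\bar x$ by at most $\eta_t\|\bar g(t)\|\leq\eta_t dG\rightarrow 0$, ``the increments of $\|\nabla f(\bar x(t))\|$ vanish, upgrading $\liminf$ to $\lim=0$.'' That implication is false: a nonnegative sequence with vanishing increments and $\liminf=0$ need not converge to zero (a sequence that ramps from $0$ up to $1$ and back down with steps of size $1/t$ has $\liminf=0$ and $\limsup=1$). Vanishing increments alone cannot exclude slow excursions of the gradient norm away from zero.

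What actually closes the argument is the interplay of three facts you already have on hand: $\sum_t\eta_t\|\nabla f(\bar x(t-1))\|^2<\infty$ almost surely (from Robbins--Siegmund), $\sum_t\eta_t=\infty$, and the bounded-increment property with $M\coloneqq\sup_t\|\bar g(t)\|<\infty$. The paper's proof runs as follows: on the event $A_\delta=\{\limsup_t\|\nabla f(\bar x(t))\|\geq\delta\}$, pick times $t_k$ with $\|\nabla f(\bar x(t_k))\|\geq 2\delta/3$; by $L$-smoothness and the increment bound, the gradient norm remains at least $\delta/3$ until the accumulated step size after $t_k$ exceeds $\delta/(3LM)$, so each such (disjoint) excursion contributes at least $\delta^3/(27LM)$ to $\sum_t\eta_t\|\nabla f(\bar x(t-1))\|^2$; infinitely many excursions would force that sum to diverge, contradicting its almost-sure finiteness, hence $\mathbb{P}(A_\delta)=0$ for every $\delta>0$. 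Your sketch has all the ingredients for this argument, but the reasoning you state in its place would fail. A secondary omission: in Part 2 the step sizes $\eta_t\propto 1/\sqrt{t}$ violate $\sum_t\eta_t^2<\infty$, so the Part 1 machinery does not apply, and the almost-sure claim $\liminf_t\|\nabla f(\bar x(t))\|=0$ needs its own justification; the paper restores summability by dividing the supermartingale inequality by $(t+1)^\epsilon$ (alternatively one could combine your expectation rate \eqref{eq:alg1_bound_grad_2} with Fatou's lemma), whereas your proposal derives only the expectation bound.
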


\begin{remark}
Note that in
\eqref{eq:alg1_bound_grad_2}, we use the squared norm of the gradient to assess the sub-optimality of the iterates, and characterize the convergence by \emph{ergodic} rates. This type of convergence rate bound is common for local methods of unconstrained nonconvex problems where we do not aim for global optimal solutions \cite{ghadimi2013stochastic,nesterov2017random}.
\end{remark}

\begin{remark}
Each iteration of Algorithm \ref{alg:2point} requires $2$ queries of function values. Thus the convergence rate
\eqref{eq:alg1_bound_grad_2} can also be interpreted as $O(\sqrt{d/m}\log m)$ where 
$m$ denotes the number of function value queries.
Characterizing convergence rate in terms of the number of function value queries $m$ and the dimension $d$ is conventional for zero-order optimization. In scenarios where zero-order methods are applied, the computation of the function values is usually one of the most time-consuming procedures. In addition, it is also of interest to characterize how the convergence scales with the dimension $d$.
\end{remark}

The next theorem shows that for a gradient dominated global objective, a better convergence rate can be achieved.
{
\begin{theorem}\label{theorem:alg1_grad_dom}
Assume that each local objective function $f_i$ is uniformly $L$-smooth for some $L>0$. Furthermore, assume that $\inf_{x\in\mathbb{R}^d}f_i(x)=f_i^\ast>-\infty$ for each $i$, and that the global objective function $f$ is $\mu$-gradient dominated and has a minimum value denoted by $f^\ast$. Suppose
\begin{equation*}
\eta_t=\frac{2\alpha_\eta}{\mu(t+t_0)},
\quad
u_t\leq\frac{\alpha_u}{\sqrt{t+t_0}}
\end{equation*}
for some $\alpha_\eta>1$ and $\alpha_u>0$, where
$$
t_0\geq
\frac{2\alpha_\eta L}{\mu(1\!-\!\rho^2)}
\!\left(
\frac{32Ld}{3\mu}+
9\rho
\right)
-1.
$$
Then, using Algorithm~\ref{alg:2point}, we have
\begin{align}
\mathbb{E}\!\left[
f(\bar x(t)) \!-\! f^\ast
\right]
\leq\ &
\!\left(\!\frac{32\alpha_\eta^2 L^2\Delta}{\mu^2}
+\frac{6\alpha_\eta\alpha_u^2 L^2}{\mu} \!\right)
\!\frac{d}{t}
+o(t^{-1}), \label{eq:alg1:bound_f_grad_dom} \\
\frac{1}{n} \! \sum_{i=1}^n \! \mathbb{E}\!\left[
\|x^i(t) \!-\! \bar x(t)\|^2\right]
\leq\ &
\frac{32\alpha_\eta^2\rho^2 L\Delta}{\mu^2(1-\rho^2)}
\!\left(
\frac{4d}{3}+\frac{6\rho^2}{1 \!-\!\rho^2}
\right)\!
\frac{1}{t^2} + o(t^{-2}),
\label{eq:alg1_consensus_grad_dom}
\end{align}
where $\Delta\coloneqq
f^\ast-\frac{1}{n}\sum_{i=1}^n f^\ast_i$.
\end{theorem}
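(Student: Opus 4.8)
The plan is to control two coupled quantities: the optimality gap of the network-average iterate $\bar x(t)$ and the consensus error $\frac1n\sum_i\|x^i(t)-\bar x(t)\|^2$. I would first record the properties of the $2$-point estimator I need: by Proposition \ref{prop:basic_lemma_2point}, $\mathbb E[g^i(t)\mid\mathcal F_{t-1}]=\nabla f_i^{u_t}(x^i(t-1))$ with smoothing bias $\|\nabla f_i^{u_t}(x)-\nabla f_i(x)\|=O(u_tL)$, together with a Nesterov-type second-moment bound $\mathbb E[\|g^i(t)\|^2\mid\mathcal F_{t-1}]=O\big(d\,\|\nabla f_i(x^i(t-1))\|^2+d^2u_t^2L^2\big)$. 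Since $W$ is doubly stochastic, averaging \eqref{eq:alg1_iteration} gives $\bar x(t)=\bar x(t-1)-\eta_t\bar g(t)$ with $\bar g(t)=\frac1n\sum_i g^i(t)$, so the average iterate follows an inexact gradient step and the two recursions can be set up in parallel.

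For the optimality gap, I would apply $L$-smoothness of $f$ to $\bar x(t)=\bar x(t-1)-\eta_t\bar g(t)$, take conditional expectation, and use the mean formula to write the cross term as $-\eta_t\|\nabla f(\bar x(t-1))\|^2$ plus inner products with the smoothing bias and with the consensus-induced error $\frac1n\sum_i[\nabla f_i(x^i)-\nabla f_i(\bar x)]$ (the latter bounded by $L$ times the consensus error via Cauchy--Schwarz). The decisive point is the $\frac{L}{2}\eta_t^2\mathbb E\|\bar g(t)\|^2$ term: its variance part carries the factor $d$, and because the local gradients $\nabla f_i$ do not vanish at the global optimum, $\frac1n\sum_i\|\nabla f_i(x^i)\|^2$ stays bounded away from zero. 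I would bound it using $\|\nabla f_i(x)\|^2\le 2L(f_i(x)-f_i^\ast)$, which produces exactly the heterogeneity constant $\Delta=f^\ast-\frac1n\sum_i f_i^\ast$. Invoking gradient domination $\|\nabla f(\bar x)\|^2\ge 2\mu(f(\bar x)-f^\ast)$ turns the $-\eta_t\|\nabla f\|^2$ term into a contraction, while the surviving $d$-proportional variance splits into a part proportional to $f(\bar x)-f^\ast$ (to be reabsorbed into the descent) and a part proportional to $\Delta$ and to the consensus error (forcing). Reabsorbing the first part is where the lower bound on $t_0$ enters: it guarantees that $\eta_t$ dominates $Ld\,\eta_t^2$ for every $t$, which is precisely the meaning of the $\tfrac{32Ld}{3\mu}$ term in the threshold, while the $\rho$ and $(1-\rho^2)$ factors there absorb the consensus coupling.

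For the consensus error I would stack the iterates, let $\Pi$ denote the projection orthogonal to the consensus direction, and use $\|W-n^{-1}\mathbf1_n\mathbf1_n^\top\|=\rho<1$ from \eqref{eq:def_rho_net} to obtain a recursion of the form $\mathbb E\|\Pi X(t)\|^2\le\rho^2(1+\delta)\,\mathbb E\|\Pi X(t-1)\|^2+O(\rho^2\eta_t^2)\,\mathbb E\|\Pi G(t)\|^2$. Bounding the forcing by the estimator second moment (again $O(d)$, and, after $\|\nabla f_i\|^2\le 2L(f_i-f_i^\ast)$, ultimately $O(d\,\Delta)$ once the optimality gap is small) and solving the $\rho^2$-geometric recursion with an $O(d/t^2)$ input yields the $O(d/t^2)$ bound \eqref{eq:alg1_consensus_grad_dom}. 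Because this is one order faster than the optimality gap, it re-enters the descent inequality only as a lower-order perturbation.

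Finally I would assemble the scalar recursion $V(t)\le\big(1-\tfrac{2\alpha_\eta}{t+t_0}\big)V(t-1)+\tfrac{C\,d}{(t+t_0)^2}+o(t^{-2})$ for $V(t)=\mathbb E[f(\bar x(t))-f^\ast]$, with $C$ collecting the $\Delta$-variance and the $u_t^2$-smoothing contributions, and close it with a standard lemma: if $a_t\le(1-\tfrac c t)a_{t-1}+\tfrac b{t^2}$ with $c>1$, then $a_t\le\tfrac{b}{c-1}\,t^{-1}+o(t^{-1})$. Here $c=2\alpha_\eta$, so the hypothesis $\alpha_\eta>1$ is exactly what makes the lemma applicable and gives the $O(d/t)$ rate \eqref{eq:alg1:bound_f_grad_dom} with the stated coefficient. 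The main obstacle is the two-way coupling between optimality gap and consensus error, together with the two features that never disappear in the distributed zero-order setting: the non-vanishing local gradients (encoded by $\Delta$) and the $d$-factor in the estimator variance. Keeping all contraction coefficients positive for every $t$ under the explicit $t_0$ threshold, while tracking the exact $d$- and $\Delta$-dependence through both recursions so that they combine into the stated leading constants, is the delicate part of the argument.
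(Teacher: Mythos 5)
Your outline reproduces the paper's skeleton almost step for step: the descent inequality for $\bar x(t)$ (Lemma~\ref{lemma:alg1:basic_ineq}), the second-moment bound of $\bar g(t)$ whose $d$-factor and heterogeneity constant $\Delta$ enter via $\|\nabla f_i\|^2\le 2L(f_i-f_i^\ast)$ (Lemma~\ref{lemma:gradient_upper_bound}), gradient domination together with the $t_0$-induced step-size condition to reabsorb the $\delta$-proportional variance (Lemma~\ref{lemma:alg1_graddom:opt_gap_bound}), the $\rho$-contracting consensus recursion (Lemma~\ref{lemma:alg1_graddom:consns_err_bound}), and a scalar sequence lemma at the end. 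The genuine gap is at the one point you wave at rather than argue: the two recursions are circularly coupled, and your plan resolves the circle by assuming its conclusion. Your consensus bound acquires its $\Delta$-only leading constant ``once the optimality gap is small,'' and your gap recursion closes only after the consensus error is known to be $O(d/t^2)$; neither fact is available first. The paper breaks the circle with a dedicated bootstrap (Lemma~\ref{lemma:alg1_graddom:boundedness_delta}): it stacks $\bigl(e_{\mathrm{c}}(t)/n,\ \mathbb{E}[f(\bar x(t))-f^\ast]\bigr)$ into a component-wise $2\times 2$ matrix recursion, shows the matrix norm is $1-\alpha_\eta/t+O(t^{-2})$, and extracts the \emph{coarse} rate $\mathbb{E}[f(\bar x(t))-f^\ast]=O(t^{-1/2})$. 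Only with this in hand does the consensus bound \eqref{eq:alg1_consensus_grad_dom} hold with the stated constant (the gap-dependent term in the consensus recursion then contributes $O(t^{-5/2})=o(t^{-2})$), and only then does the refined scalar recursion yield \eqref{eq:alg1:bound_f_grad_dom}. The natural one-pass repair of your plan---a joint Lyapunov function $\mathbb{E}[f(\bar x(t))-f^\ast]+\beta\, e_{\mathrm{c}}(t)/n$---does not give the theorem as stated: the cross-coupling forces you to give up a portion of the $\eta_t\mu/2$ contraction (tightening the requirement on $\alpha_\eta$ beyond $\alpha_\eta>1$), and it only delivers $e_{\mathrm{c}}(t)/n=O(t^{-1})$ rather than the $O(t^{-2})$ claimed in \eqref{eq:alg1_consensus_grad_dom}, so a second refinement pass is unavoidable in any case. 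Some such two-stage argument is a missing idea, not a routine detail.

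A smaller but real slip: after Young's inequality spends half of the descent term on the bias and consensus cross terms, and the $\delta$-proportional variance eats another $\eta_t\mu/2$ (this is exactly what the condition $\eta_t L\le 3\mu/(32Ld)$, i.e.\ the $\tfrac{32Ld}{3\mu}$ term in the $t_0$ threshold, buys), the surviving contraction is $1-\eta_t\mu/2=1-\alpha_\eta/(t+t_0)$, not $1-2\alpha_\eta/(t+t_0)$. So in your closing lemma $c=\alpha_\eta$, which is precisely why $\alpha_\eta>1$ is the right hypothesis; with your claimed $c=2\alpha_\eta$ the lemma would already apply for $\alpha_\eta>1/2$, contradicting your own remark that $\alpha_\eta>1$ is ``exactly'' what is needed and signaling that the bookkeeping of the absorbed terms is off by a factor of two.
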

}

\begin{remark}
The convergence rate \eqref{eq:alg1:bound_f_grad_dom} can also be described as $\mathbb{E}[f(\bar x(t))-f^\ast]=O(d/m)$, where $m$ is the number of function value queries.
\end{remark}

Table \ref{tab:main_results} shows that, while Algorithm \ref{alg:2point} employs a randomized 2-point zero-order estimator of $\nabla f_i$, its convergence rates are comparable with the decentralized gradient descent (DGD) algorithm \cite{lian2017can,nedic2016stochastic}. However, its convergence rates are inferior to its centralized zero-order counterpart in \cite{nesterov2017random}.

\subsection{Convergence of Algorithm \ref{alg:multipoint}}
Let $(x^i(t),s^i(t))$ denote the sequence generated by Algorithm \ref{alg:multipoint} with a constant step size $\eta$. Denote
$$
\bar x(t)
\coloneqq \frac{1}{n}\sum_{i=1}^n x^i(t),
\qquad
\qquad
R_0
\coloneqq
\frac{1}{n}\sum_{i=1}^n\!
\left(\! \frac{\eta \rho^2 }{2L}\|\nabla f_i(x^i(0))\|^2
\!+\!\|x^i(0)-\bar x(0)\|^2 \!\right)
+\frac{\eta\rho^2 u_1^2 Ld}{4}.
$$
We first analyze the case where the local objectives are nonconvex and smooth.
\begin{theorem}\label{theorem:alg2_non_grad_dom}
Assume that each local objective function $f_i$ is uniformly $L$-smooth for some positive constant $L$, and that $f^\ast\coloneqq \inf_{x\in\mathbb{R}^d} f(x)>-\infty$.
Suppose
$$
\eta L\leq
\min\left\{
\frac{1}{6},
\frac{(1-\rho^2)^2}{4\rho^2(3+4\rho^2)}\right\},
\quad
R_u\coloneqq d\sum_{t=1}^\infty u_t^2
<+\infty,
$$
and that $u_t$ is non-increasing. Then $\lim_{t\rightarrow\infty} f(\bar x(t))$ exists,
\begin{equation}\label{eq:alg2:convergence_alg2}
\begin{aligned}
\frac{1}{t}\sum_{\tau=0}^{t-1}
\|\nabla f(\bar x(\tau))\|^2
\leq\ &
\frac{1}{t}\left[\frac{3.2 (f(\bar x(0))-f^\ast)}{\eta}
+ \frac{12.8 L^2 R_0 }{1-\rho^2}
+2.4 R_u L^2\right],
\end{aligned}
\end{equation}
and
\begin{align}
\frac{1}{t}\sum_{\tau=0}^{t-1}
\frac{1}{n}\sum_{i=1}^n\|x^i(\tau)-\bar x(\tau)\|^2
\leq\ &
\frac{1}{t}\!
\left[
1.6\eta(f(\bar x(0))-f^\ast)
+\frac{3.2 R_0}{1-\rho^2}
+0.35 R_u\right],
\label{eq:alg2:consensus_alg2} \\
\frac{1}{t}\!
\sum_{\tau=1}^{t}\!
\frac{1}{n}\!\sum_{i=1}^n\!\|s^i(\tau)\!-\!\nabla f(\bar x(\tau\!\!-\!\!1))\|^2
\leq\ &
\frac{1}{t}\!
\left[
9.6L(f(\bar x(0))-f^\ast)
+\frac{19.2 L R_0}{\eta(1-\rho^2)}
+\frac{2.35}{\eta}
LR_u
\right]
. \label{eq:alg2:grad_tracking_alg2}
\end{align}
\end{theorem}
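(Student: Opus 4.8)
The plan is to run a Lyapunov/potential-function argument on the average iterate together with the consensus and gradient-tracking disagreements, and then telescope. Write $\bar s(t)=\frac1n\sum_i s^i(t)$ and $\Omega(t):=\frac1n\sum_i\|x^i(t)-\bar x(t)\|^2$. Three structural facts drive everything: (i) the $2d$-point estimator is a \emph{deterministic} biased gradient with $\|g^i(t)-\nabla f_i(x^i(t-1))\|\le \tfrac12 u_t L\sqrt d$ by the bound stated in Section~\ref{subsec:prelim}; (ii) the tracking step preserves means, so by induction (using $s^i(0)=g^i(0)=0$) one has $\bar s(t)=\frac1n\sum_i g^i(t)$; and (iii) since $W$ is doubly stochastic, $\bar x(t)=\bar x(t-1)-\eta\,\bar s(t)$.

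First I would derive a descent inequality for $f(\bar x(t))$. Applying $L$-smoothness to $\bar x(t)=\bar x(t-1)-\eta\bar s(t)$, splitting $\bar s(t)=\nabla f(\bar x(t-1))+(\bar s(t)-\nabla f(\bar x(t-1)))$, and using Young's inequality gives
\[
f(\bar x(t))\le f(\bar x(t-1))-\tfrac{\eta}{2}\|\nabla f(\bar x(t-1))\|^2+\tfrac{\eta}{2}\|\bar s(t)-\nabla f(\bar x(t-1))\|^2+\tfrac{L\eta^2}{2}\|\bar s(t)\|^2 .
\]
By fact (ii), fact (i), and $L$-smoothness of each $f_i$, the tracking error of the \emph{average} splits cleanly into a consensus part and a zero-order part,
\[
\|\bar s(t)-\nabla f(\bar x(t-1))\|^2\le 2L^2\,\Omega(t-1)+\tfrac12 u_t^2 L^2 d ,
\]
so the objective is reduced to $\|\nabla f\|^2$ plus controllable consensus and $u$-dependent terms.

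Next I would set up the two disagreement recursions in stacked form using $\mathbf W=W\otimes I_d$ and $Q=I_n-\frac1n\mathbf 1\mathbf 1^\top$, exploiting $QW=WQ=W-\frac1n\mathbf 1\mathbf 1^\top$ with $\|W-\frac1n\mathbf 1\mathbf 1^\top\|=\rho$. For consensus, $\|(Q\otimes I)\mathbf x(t)\|\le \rho(\|(Q\otimes I)\mathbf x(t-1)\|+\eta\|(Q\otimes I)\mathbf s(t)\|)$; for tracking, $\|(Q\otimes I)\mathbf s(t)\|\le \rho(\|(Q\otimes I)\mathbf s(t-1)\|+\|(Q\otimes I)(\mathbf g(t)-\mathbf g(t-1))\|)$. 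The increment $\mathbf g(t)-\mathbf g(t-1)$ is bounded, via fact (i) and smoothness, by $L\|\mathbf x(t-1)-\mathbf x(t-2)\|$ plus a $u$-term, and $\mathbf x(t-1)-\mathbf x(t-2)=(\mathbf W-I)(Q\otimes I)\mathbf x(t-2)-\eta\mathbf W\mathbf s(t-1)$ couples the increment back to the consensus error and to $\|\mathbf s(t-1)\|$ (split into its mean $\bar s$ and its disagreement). Squaring each recursion with Young's inequality and normalizing by $n$ yields coupled inequalities among $\Omega(t)$, the tracking disagreement $\Psi(t):=\frac1n\|(Q\otimes I)\mathbf s(t)\|^2$, $\|\nabla f(\bar x(t-1))\|^2$, and $u_t^2L^2 d$.

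Finally I would form $V(t)=a(f(\bar x(t))-f^\ast)+b\,\Omega(t)+c\,\Psi(t)$ and choose $a,b,c$ and all Young parameters so the coupled inequalities combine into
\[
V(t)\le V(t-1)-\alpha_1\|\nabla f(\bar x(t-1))\|^2-\alpha_2\,\Omega(t)-\alpha_3\,\Psi(t)+\beta\,u_t^2L^2 d
\]
with $\alpha_1,\alpha_2,\alpha_3>0$. This is where the step-size condition enters in two roles: $\eta L\le 1/6$ keeps the net coefficient of $\|\nabla f\|^2$ positive after absorbing $\tfrac{L\eta^2}{2}\|\bar s(t)\|^2$, while $\eta L\le(1-\rho^2)^2/(4\rho^2(3+4\rho^2))$ ensures the $\rho^2$-contraction in the disagreement recursions dominates the $\eta$-scaled coupling so that $\alpha_2,\alpha_3>0$. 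Summing from $\tau=1$ to $t$, using $V\ge0$, $f(\bar x(t))\ge f^\ast$, and $L^2 d\sum_{\tau}u_\tau^2=L^2 R_u$, bounds each of $\sum_\tau\|\nabla f(\bar x(\tau-1))\|^2$, $\sum_\tau\Omega(\tau)$, $\sum_\tau\Psi(\tau)$ by a constant; dividing by $t$ gives \eqref{eq:alg2:convergence_alg2}--\eqref{eq:alg2:grad_tracking_alg2}, with $V(0)$ yielding $R_0$ (the $\|\nabla f_i(x^i(0))\|^2$ and $u_1^2Ld$ terms arise from bounding the initial $\Psi$ through $s^i(0)=0$ and $g^i(1)$). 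For \eqref{eq:alg2:grad_tracking_alg2} I would convert via $\|s^i-\nabla f(\bar x)\|^2\le 2\Psi+2\|\bar s-\nabla f(\bar x)\|^2$ using the average-error bound already derived. The main obstacle is the bookkeeping of the third paragraph: the tracking recursion feeds on $\|\mathbf x(t-1)-\mathbf x(t-2)\|$, which reintroduces $\|\mathbf s(t-1)\|$ and hence both $\|\nabla f\|^2$ and the disagreements, so the coupling is genuinely circular; closing this loop with positive coefficients rather than merely up to constants forces a simultaneous choice of Lyapunov weights and Young parameters, and it is this balancing that pins down both the explicit numerical constants and the precise step-size threshold in the hypothesis.
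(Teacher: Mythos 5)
Your proposal follows essentially the same route as the paper's proof: the same descent inequality for $f(\bar x(t))$ (the paper's Lemma~\ref{lemma:alg2:basic_ineq}), the same deterministic bias bound $\tfrac12 u_t L\sqrt d$ for the $2d$-point estimator, the same mean-preservation facts $\bar s(t)=\bar g(t)$ and $\bar x(t)=\bar x(t-1)-\eta\bar g(t)$, the same coupled consensus/tracking-disagreement recursions driven by $\|\nabla f(\bar x(t-2))\|^2$ and $u$-terms (Lemma~\ref{lemma:alg2_consensus_basic}), and the same telescoping to obtain the ergodic bounds. The only difference is bookkeeping: where you would combine $\delta$, $\Omega$, $\Psi$ into a single weighted Lyapunov function, the paper first contracts the two disagreements through the $2\times 2$ matrix $A$ (showing $\|A\|\le(2+\rho^2)/3$ under the step-size condition), sums the resulting geometric series, and then substitutes the summed disagreement bound into the telescoped descent inequality---an equivalent way of closing the same circular coupling with positive coefficients.
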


\begin{remark}\label{remark:alg2_general}
Theorem \ref{theorem:alg2_non_grad_dom} shows that Algorithm \ref{alg:multipoint} achieves a convergence rate of $O(1/t)$ in terms of the averaged squared norm of $\nabla f(\bar x(t))$, and has a consensus rate of $O(1/t)$ for the averages of the squared consensus error $\|x^i(t) \!-\! \bar x(t)\|^2$ and the squared gradient tracking error $\|s^i(t) \!-\! \nabla f(\bar x(t \!-\! 1))\|^2$. They match the rates for distributed nonconvex optimization with gradient tracking \cite{scutari2019distributed}. On the other hand, since each iteration requires $2d$ queries of function values, we get a $O(d/m)$ rate in terms of the number of function value queries $m$. This matches the convergence rate of centralized zero-order algorithms without Nesterov-type acceleration \cite{nesterov2017random}.
\end{remark}

Now we proceed to the situation with a gradient dominated global objective.
\begin{theorem}\label{theorem:alg2_grad_dom}
Assume that each local objective function $f_i$ is uniformly $L$-smooth for some positive constant $L$, and that the global objective function $f$ is $\mu$-gradient dominated and achieves it global minimum at $x^\ast$. Suppose the step size $\eta$ satisfies
\begin{equation}\label{eq:alg2_grad_dom:eta_condition}
\eta L= \alpha\cdot \left(\frac{\mu}{L}\right)^{\frac{1}{3}}\frac{(1-\rho^2)^2}{14}
\end{equation}
for some $\alpha \in(0,1]$, and $(u_t)_{t\geq 1}$ is non-increasing. Let
$$
\lambda
\coloneqq 1-\alpha\Big(\frac{1-\rho^2}{5}\Big)^2
\Big(\frac{\mu}{L}\Big)^{\frac{4}{3}}.
$$
Then
\begin{equation}\label{eq:lemma_converge_alg2_grad_dom}
f(\bar x(t))-f(x^\ast)
\leq
O(\lambda^t)
+
5(1-\rho^2)Ld
\sum_{\tau=0}^{t-1}
\lambda^\tau u^2_{t-\tau},
\end{equation}
\begin{equation}\label{eq:lemma_consensus_alg2_grad_dom}
\frac{1}{n}\sum_{i=1}^n\| x^i(t) \!-\! \bar x(t)\|^2
\leq
O(\lambda^t)
+\frac{3\eta Ld}{1 \!-\! \rho^2}
\sum_{\tau=0}^{t-1}
\lambda^\tau u^2_{t-\tau},
\end{equation}
\begin{equation}
\frac{1}{n}\sum_{i=1}^n \|s^i(t)\!-\!\nabla f(\bar x(t\!-\!1))\|^2
\leq
O(\lambda^t) + 
\frac{18L^2d}{1\!-\!\rho^2}
\sum_{\tau=0}^{t-1}\lambda^{\tau} u_{t\!-\!\tau}^2.
\end{equation}
\end{theorem}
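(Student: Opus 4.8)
The plan is to build a single scalar Lyapunov function coupling the three quantities in the statement and to show it contracts at rate $\lambda$ up to an additive error driven by the finite-difference radius $u_t$. Write $\bar s(t)\coloneqq\frac1n\sum_i s^i(t)$; by the gradient-tracking identity $\bar s(t)=\frac1n\sum_i g^i(t)=\frac1n\sum_i\mathsf{G}^{(2d)}_{f_i}(x^i(t-1);u_t)$, so averaging the $x$-update gives $\bar x(t)=\bar x(t-1)-\eta\bar s(t)$. The first step is to control $\|\bar s(t)-\nabla f(\bar x(t-1))\|$: using the deterministic estimator bound $\|\mathsf{G}^{(2d)}_{f_i}(x;u_t)-\nabla f_i(x)\|\le\tfrac12 u_t L\sqrt d$ together with $L$-smoothness to replace $\nabla f_i(x^i(t-1))$ by $\nabla f_i(\bar x(t-1))$, this discrepancy is bounded by a multiple of the consensus error $\Omega_2(t-1)\coloneqq\frac1n\sum_i\|x^i(t-1)-\bar x(t-1)\|^2$ plus $u_t^2 Ld$. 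Feeding this into the descent lemma for the $L$-smooth $f$ yields a one-step inequality of the form $f(\bar x(t))-f^\ast\le(1-c_0\eta\mu)(f(\bar x(t-1))-f^\ast)+c_1\eta L^2\,\Omega_2(t-1)+c_2\eta L^2 d\,u_t^2$, where the contraction factor arises by combining the $-\eta\|\nabla f(\bar x(t-1))\|^2$ term with the gradient-domination inequality $\|\nabla f(\bar x(t-1))\|^2\ge 2\mu(f(\bar x(t-1))-f^\ast)$.

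Next I would derive the two companion recursions. For the consensus error, projecting the $x$-update onto the orthogonal complement of $\mathbf 1_n$ and using $\|W-n^{-1}\mathbf 1_n\mathbf 1_n^\top\|=\rho$ gives, after a Young splitting, a bound $\Omega_2(t)\le\rho^2(1+\epsilon)\,\Omega_2(t-1)+\rho^2(1+\epsilon^{-1})\eta^2\,\Omega_3(t)$, where $\Omega_3(t)\coloneqq\frac1n\sum_i\|s^i(t)-\nabla f(\bar x(t-1))\|^2$. For the tracking error, the $s$-update and the identity $\bar s(t)=\frac1n\sum_i g^i(t)$ give an analogous $\rho$-contraction whose forcing term is governed by $\frac1n\sum_i\|g^i(t)-g^i(t-1)\|^2$; bounding the latter via $L$-smoothness in terms of the iterate increments $\|x^i(t-1)-x^i(t-2)\|^2$ (hence by $\eta^2$ times the magnitude of $s$ and the previous $\Omega$'s) plus an $L^2 d(u_t^2+u_{t-1}^2)$ estimator term closes the loop, and monotonicity of $u_t$ lets me absorb $u_{t-1}^2$ into $u_t^2$-type terms up to constants.

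The third step is to assemble the three one-step inequalities into a $3\times 3$ linear recursion $\vec\Omega(t)\le M\,\vec\Omega(t-1)+\vec b_t$ for $\vec\Omega=(f-f^\ast,\Omega_2,\Omega_3)$, with $\vec b_t$ proportional to $(1-\rho^2)Ld\,u_t^2$ after rescaling, and to exhibit a positive weight vector $w$ (equivalently a weighted Lyapunov function $V(t)=w^\top\vec\Omega(t)$) for which $w^\top M\le\lambda\,w^\top$ under the stated step size. This is where the calibration $\eta L=\alpha(\mu/L)^{1/3}(1-\rho^2)^2/14$ enters: the off-diagonal couplings between objective, consensus, and tracking each contribute $O(\eta)$ or $O(\eta^2)$ feedback against the $\rho^2$ network contraction, and balancing the largest eigenvalue of $M$ against the $1-2c_0\eta\mu$ objective contraction is optimized at this fractional power of $\mu/L$, producing $\lambda=1-\alpha((1-\rho^2)/5)^2(\mu/L)^{4/3}$ (note $1-\lambda\asymp\eta\mu$, as expected). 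Once $V(t)\le\lambda V(t-1)+C(1-\rho^2)Ld\,u_t^2$ is established, unrolling gives $V(t)\le\lambda^t V(0)+C(1-\rho^2)Ld\sum_{\tau=0}^{t-1}\lambda^\tau u_{t-\tau}^2$, and reading off each component (each $\Omega_j$ being dominated by $V$ up to a constant) yields the three displayed bounds.

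The hard part will be this third step: choosing the Lyapunov weights and verifying, with explicit constants, that $M$ contracts at exactly rate $\lambda$ for all $\alpha\in(0,1]$ and all admissible $\rho$. The difficulty is that the tracking error feeds on iterate increments, which themselves depend on $s$, creating a two-way coupling between $\Omega_2$ and $\Omega_3$ that is tamed only by the $\rho^2$ network contraction; pushing $\eta$ large enough to make the objective contract quickly simultaneously inflates these coupling terms, and the nonobvious $(\mu/L)^{1/3}$ scaling of $\eta$ is precisely the value keeping the spectral radius of $M$ below $\lambda$. Making the numerical constants line up (the $14$ and the $5$) requires tracking the Young-inequality splitting parameters carefully throughout, which is the most error-prone portion of the argument.
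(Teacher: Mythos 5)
Your plan is correct in outline and shares the paper's skeleton: the same descent inequality sharpened by gradient domination, the same $\rho$-contraction for the consensus error, the same tracking recursion driven by $\|g^i(t)-g^i(t-1)\|^2$ and the $2d$-point bias $\tfrac12 u_t L\sqrt d$, and the same balance $1-\lambda\asymp\eta\mu$ (indeed $1-\lambda=\tfrac{14}{25}\eta\mu$ here). Where you genuinely diverge is the assembly, and the comparison is instructive. The paper never forms a $3\times3$ system: it stacks the two network errors into the \emph{staggered} vector $\bigl(\tfrac{5\eta}{2\sqrt{57}L}e_{\mathrm{g}}(t),\,e_{\mathrm{c}}(t-1)\bigr)$, where $e_{\mathrm{g}}(t)=\|s(t)-\mathbf 1_n\otimes\bar g(t)\|^2$ measures deviation from the \emph{average estimate} rather than from $\nabla f(\bar x(t-1))$ (the latter is recovered only at the very end via \eqref{eq:alg2:temp_bound_1}); the staggered time indices are exactly what repairs the defect in your recursion, where $\Omega_2(t)$ is driven by $\Omega_3(t)$ rather than $\Omega_3(t-1)$, and the scaling $\tfrac{5\eta}{2\sqrt{57}L}$ is chosen so that the resulting $2\times2$ matrix $A$ in Lemma~\ref{lemma:alg2_consensus_basic} is \emph{symmetric}, so its norm has a closed form; that block is then collapsed to a scalar $\sigma(t)$ and coupled to $\delta(t)$ through a second symmetric $2\times2$ matrix $B$ whose norm is computed to be at most $\lambda$. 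Your direct $3\times3$ weighted-Lyapunov route is viable—the coupling matrix is nonnegative and irreducible, since the objective feeds the tracking error through $\|\nabla f(\bar x(t-2))\|^2\le 2L\,\delta(t-2)$ (Lemma~\ref{lemma:gradient_upper_bound}), so a positive left Perron vector exists whenever the spectral radius is below $\lambda$—and it buys you the third bound of the theorem directly as a state component rather than a posteriori. What it costs is what you flagged, plus one hazard you understate: in the increment $x(t-1)-x(t-2)$ the term $((W\otimes I_d)-I_{nd})(x(t-2)-\mathbf 1_n\otimes\bar x(t-2))$ is \emph{not} $O(\eta)$, so the raw consensus error enters the tracking recursion with an $O\!\left(L^2/(1-\rho^2)\right)$ coefficient, not $O(\eta^2)$; only after rescaling the tracking variable by $\Theta(\eta/L)$ (the paper's $\tfrac{5\eta}{2\sqrt{57}L}$, i.e., your choice of weights $w$) do all off-diagonal couplings become $O\!\left(\eta L/(1-\rho^2)\right)$, small enough for the $(1-\rho^2)$ spectral margin of $W$ to absorb them. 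Verifying $w^\top M\le\lambda\,w^\top$ entrywise with the theorem's exact constants then amounts to three scalar inequalities with no closed-form norm to lean on—the paper's symmetrization is precisely its device for making the $14$, the $25$, and the forcing coefficients $5(1-\rho^2)Ld$, $3\eta Ld/(1-\rho^2)$, $18L^2d/(1-\rho^2)$ computable.
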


\begin{remark}
If we use an exponentially decreasing sequence $u_t\propto \tilde{\lambda}^{t/2}$ with $\tilde\lambda<\lambda$, then both the objective error $f(\bar x(t))-f(x^\ast)$ and the consensus errors $\|x^i(t)-\bar x(t)\|^2$ and $\|s^i(t)-\nabla f(\bar x(t-1))\|^2$ achieve linear convergence rate $O(\lambda^t)$, or $O(\lambda^{m/d})$ in terms of the number of function value queries. In addition, we notice that the decaying factor $\lambda$ given by Theorem \ref{theorem:alg2_grad_dom} has a better dependence on $\mu/L$ than in \cite{nedic2017achieving} for convex problems. We point out that this is not a result of using zero-order techniques, but rather a more refined analysis of the gradient tracking procedure.
\end{remark}

{
\begin{remark}
Note that the conditions on the step sizes in Theorems~2, 3 and 4 depend on $\rho$, a measure of the connectivity of the network. In order to choose step sizes to satisfy theses conditions in the distributed setting, one possible approach is as follows: Assuming that each agent knows an upper bound $\overline{n}$ on the total number of agents, by \cite[Lemma 2]{olshevsky2017linear}, if one chooses $W$ to be the lazy Metropolis matrix, then $\rho\leq 1-1/(71\overline{n}^2)$, based on which the agents can then derive their step sizes according to the conditions in the theorems.
We also note that some existing works (e.g., \cite{li2019decentralized}) attempt to get rid of the dependence of step sizes on the graph topology, and whether those techniques can be applied in our work is beyond the scope of this paper but is an interesting future direction.
\end{remark}
}

\subsection{Comparison of the Two Algorithms}\label{subsec:compare}

We see from the above results that Algorithm \ref{alg:multipoint} converges faster than Algorithm \ref{alg:2point} asymptotically as $m\rightarrow\infty$ in theory. However, each iteration of Algorithm \ref{alg:multipoint} makes progress only after $2d$ queries of function values, which could be an issue if $d$ is very large. On the contrary, each iteration of Algorithm \ref{alg:2point} only requires $2$ function value queries, meaning that progress can be made relatively immediately without exploring all the $d$ dimensions. This observation suggests that, when neglecting communication delays, Algorithm \ref{alg:2point} is more favorable for high-dimensional problems, whereas Algorithm \ref{alg:multipoint} could handle problems of relatively low dimensions better with faster convergence.

We emphasize that there still exists a trade-off between the convergence rate and the ability to handle high-dimensional problems  even if one combines the $2$-point gradient estimator \eqref{eq:multipoint_est} with the gradient tracking method as
\begin{equation}\label{eq:alg_2p_grad_track}
\begin{aligned}
g^i(t)
& =
\mathsf{G}^{(2)}_{f_i}
(x^i(t-1);u_t,z^i(t)),\ \ z^i(t)\sim\mathcal{U}(\mathbb{S}_{d-1}), \\
s^i(t) &= \sum_{j=1}^n W_{ij} \!\left( s^j(t\!-\!1) + g^j(t) -g^j(t\!-\!1)\right).
\\
x^i(t) & =
\sum_{j=1}^n W_{ij}(x^j(t-1)
-\eta s^j(t)).
\end{aligned}
\end{equation}
Theoretical analysis suggests that, in order for $s^i(t)$ to reach a consensus in the sense that $\mathbb{E}\!\left[\|s^i(t) \!-\! s^j(t)\|^2\right]\!$ converges to $0$, we need
$$
\lim_{t\rightarrow\infty}\mathbb{E}
\!\left[
\|g^i(t)-g^i(t-1)\|^2\right]\rightarrow 0.
$$
On the other hand, we have the following lemma regarding the variance of the $2$-point gradient estimator $\mathsf{G}^{(2)}_f(x;u,z)$.

\begin{lemma}\label{lemma:2point_var}
Let $f:\mathbb{R}^d\rightarrow\mathbb{R}$ be an arbitrary $L$-smooth function. Then
$$
\lim_{u\rightarrow 0^+}\mathbb{E}_{z}
\!\left[
\|\mathsf{G}^{(2)}_f(x;u,z)\|^2
\right]
=(d-1)\|\nabla f(x)\|^2,
$$
where $z\sim\mathcal{U}(\mathbb{S}_{d-1})$.
\end{lemma}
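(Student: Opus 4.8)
The plan is to evaluate the limit directly by replacing the symmetric difference quotient with the directional derivative and then integrating against the uniform measure on the sphere. As the discussion preceding the lemma makes explicit, the relevant quantity is the \emph{variance} of $\mathsf{G}^{(2)}_f(x;u,z)$, so I will compute $\lim_{u\to 0^+}\mathbb{E}_z\big[\|\mathsf{G}^{(2)}_f(x;u,z)-\mathbb{E}_z\mathsf{G}^{(2)}_f(x;u,z)\|^2\big]$ and show it equals $(d-1)\|\nabla f(x)\|^2$. Throughout I fix $x$ and abbreviate $g=\nabla f(x)$. By Proposition~\ref{prop:basic_lemma_2point} the mean is $\mathbb{E}_z\mathsf{G}^{(2)}_f(x;u,z)=\nabla f^u(x)$, so this variance equals $\mathbb{E}_z[\|\mathsf{G}^{(2)}_f(x;u,z)-\nabla f^u(x)\|^2]$; it therefore suffices to identify the pointwise-in-$z$ limit of $\mathsf{G}^{(2)}_f(x;u,z)$ and of $\nabla f^u(x)$, and to justify passing the limit through the expectation.

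First I would expand the scalar difference quotient. Since $\|z\|=1$, we have $\mathsf{G}^{(2)}_f(x;u,z)=d\,\frac{f(x+uz)-f(x-uz)}{2u}\,z$, and $L$-smoothness gives the two-sided Taylor estimate $|f(x\pm uz)-f(x)\mp u\,g^\top z|\le \frac{L}{2}u^2\|z\|^2=\frac{L}{2}u^2$. Combining the two bounds yields $\frac{f(x+uz)-f(x-uz)}{2u}=g^\top z+r_u(z)$ with $|r_u(z)|\le \frac{L}{2}u$ uniformly over $z\in\mathbb{S}_{d-1}$. Hence $\mathsf{G}^{(2)}_f(x;u,z)\to d\,(g^\top z)\,z$ uniformly in $z$ as $u\to 0^+$, while $\nabla f^u(x)=\mathbb{E}_{y\sim\mathcal{U}(\mathbb{B}_d)}[\nabla f(x+uy)]\to g$ by continuity of $\nabla f$ and dominated convergence. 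The integrand $\|\mathsf{G}^{(2)}_f(x;u,z)-\nabla f^u(x)\|^2$ is uniformly bounded for all small $u$, since $\|\mathsf{G}^{(2)}_f(x;u,z)\|\le d(\|g\|+\frac{L}{2}u)$ and $\|\nabla f^u(x)\|$ stays bounded as $u\to 0^+$, so dominated convergence permits replacing the integrand by its limit.

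It then remains to compute the limiting integral. Expanding $\|d\,(g^\top z)\,z-g\|^2=(d^2-2d)(g^\top z)^2+\|g\|^2$, using $\|z\|^2=1$ and $g^\top z\,z^\top g=(g^\top z)^2$, and invoking the sphere second-moment identity $\mathbb{E}_z[zz^\top]=\frac{1}{d}I_d$ (immediate from the rotational symmetry of $\mathcal{U}(\mathbb{S}_{d-1})$ together with $\mathrm{tr}\,\mathbb{E}_z[zz^\top]=\mathbb{E}_z\|z\|^2=1$), I obtain $\mathbb{E}_z[(g^\top z)^2]=g^\top\mathbb{E}_z[zz^\top]g=\frac{1}{d}\|g\|^2$. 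Therefore the limiting variance is $(d^2-2d)\frac{\|g\|^2}{d}+\|g\|^2=(d-1)\|g\|^2$, which is the claimed limit $(d-1)\|\nabla f(x)\|^2$.

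I expect the only delicate point to be the uniform control of the remainder $r_u(z)$ over the whole sphere: this is precisely where $L$-smoothness (rather than mere differentiability) is needed, since it supplies a Taylor error that is $O(u^2)$ with a constant independent of the direction $z$, making $r_u(z)$ uniformly $O(u)$ and thereby licensing the interchange of limit and expectation. Once that interchange is justified, the remaining sphere-moment computation is entirely elementary, and the $(d-1)$ factor emerges from the single integral above rather than from any cancellation that would need to be tracked separately.
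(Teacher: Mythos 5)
Your proof is correct and takes essentially the same route as the paper's: both pass the limit through the expectation (the paper via the bounded convergence theorem with the bound $\sup_{y\in\mathbb{B}_d}\|\nabla f(x+y)\|$, you via the uniform Taylor remainder $|r_u(z)|\le\frac{1}{2}Lu$) and then evaluate the sphere moment $\mathbb{E}_z[(\nabla f(x)^\top z)^2]=\frac{1}{d}\|\nabla f(x)\|^2$ from $\mathbb{E}_z[zz^\top]=\frac{1}{d}I_d$. You also correctly diagnosed that the lemma is really a statement about the variance $\mathbb{E}_z\big[\|\mathsf{G}^{(2)}_f(x;u,z)-\nabla f^u(x)\|^2\big]$ (the displayed equation omits the centering, since the raw second moment tends to $d\|\nabla f(x)\|^2$), exactly as the paper's own proof confirms: it computes the uncentered limit $d\|\nabla f(x)\|^2$ and then subtracts $\|\nabla f^u(x)\|^2$ using $\mathbb{E}_z[\mathsf{G}^{(2)}_f(x;u,z)]=\nabla f^u(x)$, whereas you center first and expand $\|d(\nabla f(x)^\top z)z-\nabla f(x)\|^2$ directly---a cosmetic difference only.
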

\begin{proof}
Notice that for any $z\in\mathbb{S}_{d-1}$ and $u\in(0,1]$, we have
$$
\left|\frac{f(x\!+\!uz)-f(x\!-\!uz)}{2u}\right|
\leq
\sup_{y\in\mathbb{B}_d}
\|\nabla f(x+y)\|.
$$
Therefore
$$
\begin{aligned}
\lim_{u\rightarrow 0}
\mathbb{E}_{z}
\!\left[
\|\mathsf{G}^{(2)}_f(x;u,z)\|^2\right]
=\ &
d^2\mathbb{E}_{z}\!
\left[
\left|
\lim_{u\rightarrow 0}
\frac{f(x\!+\!uz) \!-\! f(x\!-\!uz)}{2u}
\right|^2
\right]
\!\!=
d^2\mathbb{E}_{z}\!
\left[
\left|
\nabla f(x)^\top z
\right|^2
\right] \\
=\ &
d^2 \nabla f(x)^\top \mathbb{E}_{z}\!
\left[
zz^\top
\right]\nabla f(x)
= d\|\nabla f(x)\|^2,
\end{aligned}
$$
where in the second step we exchanged the order of limit and expectation by the bounded convergence theorem, and in the last step we used $d\,\mathbb{E}_{z}\!
\left[
zz^\top
\right]=I_d$ for $z \sim\mathcal{U}(\mathbb{S}_{d-1})$. Then, noticing that $\nabla f^u(x)\rightarrow\nabla f(x)$ as $u\rightarrow 0$, we get
$$
\begin{aligned}
\lim_{u\rightarrow 0}
\mathbb{E}_{z}
\!\left[
\|\mathsf{G}^{(2)}_f(x;u,z)
-\nabla f^u(x)\|^2
\right]
=\ &
\lim_{u\rightarrow 0}
\left(
\mathbb{E}_{z}
\!\left[
\|\mathsf{G}^{(2)}_f(x;u,z)\|^2\right]
-\|\nabla f^u(x)\|^2\right) \\
=\ &
(d-1)\|\nabla f(x)\|^2,
\end{aligned}
$$
which completes the proof.
\end{proof}
Lemma~\ref{lemma:2point_var} suggests that, each gradient estimator $\mathsf{G}^{(2)}_{f_i}(x^i(t-1);u_t,z^i(t))$ in \eqref{eq:alg_2p_grad_track} will produce a non-vanishing variance approximately equal to $(d-1)\,\mathbb{E}\!\left[\|\nabla f_i(x^i(t-1))\|^2\right]$ even if we let $u\rightarrow 0$ as $x^i(t)$ approaches a stationary point. Consequently, $\mathbb{E}\!\left[\|g^i(t)-g^i(t-1)\|^2\right]$ is not guaranteed to converge to zero as $t\rightarrow \infty$. The non-vanishing variance will then be reflected in $s^i(t)$ that tracks the global gradient, and consequently the overall convergence will be slowed down. We refer to \cite{nedic2017achieving,qu2018harnessing,pu2018distributed} for related analysis, and to Section~\ref{sec:simulation} for a numerical example.

\subsection{Comparison with Existing Algorithms}\label{subsec:compare_other_algs}

In this subsection, we provide a detailed comparison with existing literature on distributed zero-order optimization, specifically \cite{hajinezhad2019zone,sahu2018distributed,yu2019distributed}.

\begin{enumerate}
\item References \cite{sahu2018distributed,yu2019distributed} discuss convex problems, while \cite{hajinezhad2019zone} and our work focus on nonconvex problems.

\item  In terms of the assumptions on the noisy function queries, \cite{yu2019distributed} and our work consider a noise-free case. \cite{hajinezhad2019zone} considers stochastic queries but assumes two function values can be obtained for a single random sample.
\cite{sahu2018distributed} assumes independent additive noise on each function value query. We expect that our Algorithm~1 can be generalized to the setting adopted in \cite{hajinezhad2019zone} with heavier mathematics. Extensions to general stochastic cases remain our ongoing work.

\item In terms of the approach to reach consensus among agents, our algorithms are similar to \cite{sahu2018distributed,yu2019distributed}, where some weighted average of the neighbors' local variables is utilized, while \cite{hajinezhad2019zone} uses the method of multipliers to design their algorithms. We also mention that, our Algorithm~2 employs the gradient tracking technique, which, to our best knowledge, has not been discussed in existing literature on distributed zero-order optimization yet.

\item Regarding the convergence rates for nonconvex optimization, \cite{hajinezhad2019zone} proved that its proposed ZONE algorithm achieves $O(1/{T})$ rate if each iteration also employs $O(T)$ function value queries, where $T$ is the number of iterations planned in advance. Therefore in terms of the number of function value queries $M$, its convergence rate is in fact $O(1/{\sqrt{M}})$, which is roughly comparable with Algorithm 1 and slower than Algorithm 2 in our paper. Also, \cite{hajinezhad2019zone} did not discuss the dependence on the problem dimension $d$. Moreover, our algorithms only require constant numbers ($2$ or $2d$) of function value queries which is more appealing for practical implementation when $T$ is set to be very large for achieving sufficiently accurate solutions.
\end{enumerate}

\section{Numerical Examples}\label{sec:simulation}

We consider a multi-agent nonconvex optimization problem formulated as
\begin{equation}\label{eq:num_eg}
\begin{aligned}
\min_{x\in\mathbb{R}^d}\ \ 
& \frac{1}{n}\sum_{i=1}^n f_i(x),\\
f_i(x)=\ &
\frac{a_i}{1+\exp(-\xi_i^\top x\!-\!\nu_i)}
+b_i\ln(1+\|x\|^2),
\end{aligned}
\end{equation}
where $a_i,b_i,\nu_i\in\mathbb{R}$ and $\xi_i\in\mathbb{R}^d$ for each $i=1,\ldots,N$.

For the numerical example, we set the dimension to be $d=64$ and the number of agents to be $n=50$. The parameters $a_i$, $\nu_i$ and each entry of $\xi_i$ are randomly generated from the standard Gaussian distribution, and $(b_1,\ldots,b_n)$ is generated from the distribution $\mathcal{N}\big(\mathbf{1}_n,
I_n-\frac{1}{n}\mathbf{1}_n\mathbf{1}_n^\top\big)$ so that $\frac{1}{n}\sum_i b_i=1$. The graph $\mathcal{G}=(\mathcal{N},\mathcal{E})$ is generated by uniformly randomly sampling $n$ points on $\mathbb{S}_2$, and then connecting pairs of points with spherical distances less than $\pi/4$. The Metropolis-Hastings weights \cite{xiao2005scheme} are employed for constructing $W$.

We compare the following algorithms on the problem \eqref{eq:num_eg}:
\begin{enumerate}[itemsep = 0pt]
\item Algorithm~\ref{alg:2point} with $\eta_t=0.02/\sqrt{t}$ and $u_t=4/\sqrt{t}$;
\item Algorithm~\ref{alg:multipoint} with $\eta=0.02$ and $u_t=4/t^{3/4}$;
\item ZONE-M \cite{hajinezhad2019zone}, where we test two setups $J=1$, $\rho_t=4\sqrt{t}$, $u_t=4/\sqrt{t}$ and $J=100$, $\rho_t=0.4\sqrt{t}$, $u_t=4/\sqrt{t}$;
\item $2$-point gradient estimator combined with gradient tracking [see \eqref{eq:alg_2p_grad_track}] with $\eta=2\times 10^{-4}$ and $u_t=4/t^{3/4}$.
\end{enumerate}
All algorithms start from the same initial points, which are randomly generated from the distribution $\mathcal{N}(0,\frac{25}{d}I_{d})$ for each agent.

\subsection{Comparison of Algorithm~1 and Algorithm~2}

\begin{figure}[t]
\centering
\includegraphics[width=.55\textwidth]{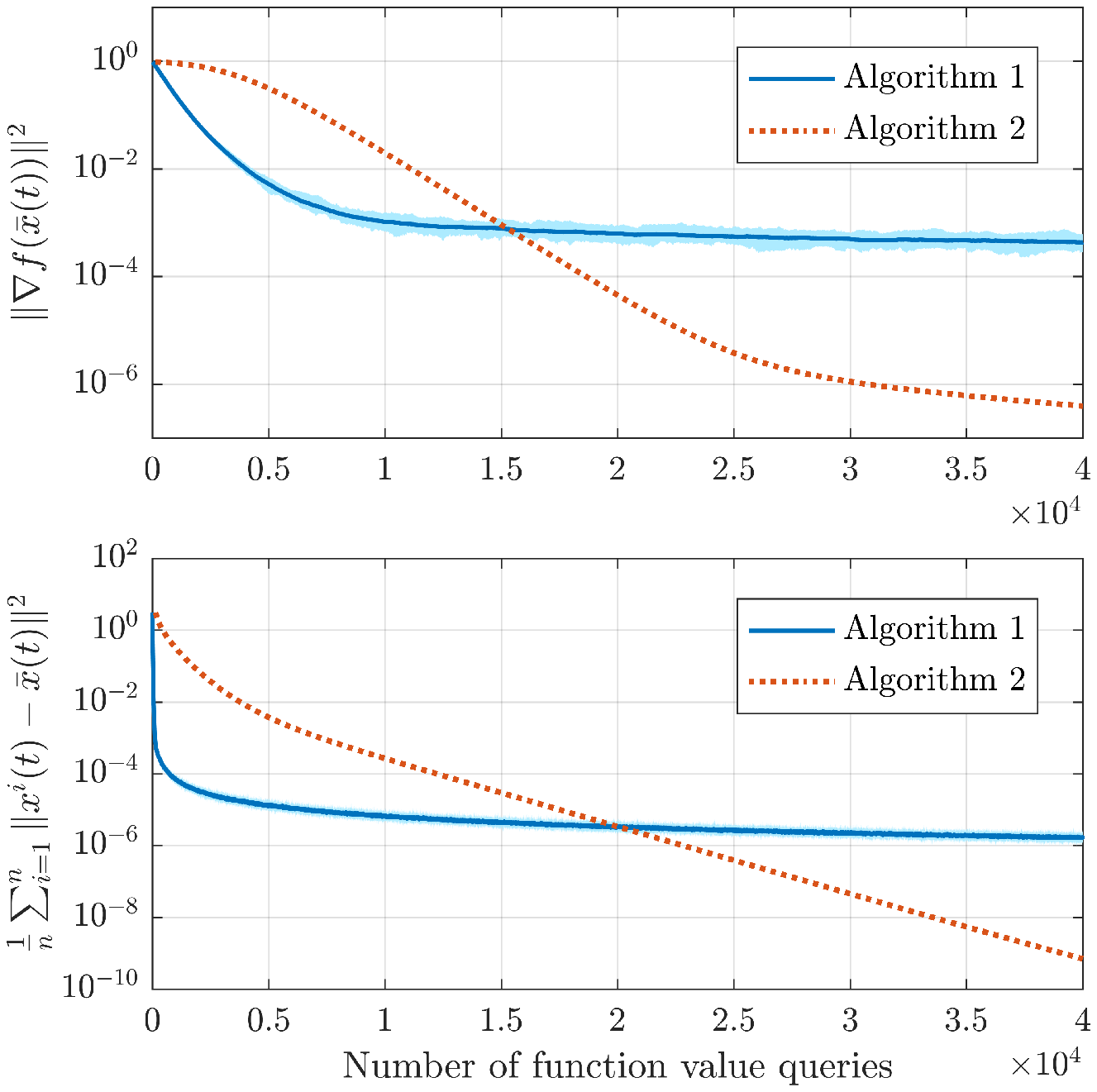}
\caption{Convergence of Algorithm~1 and Algorithm~2. For Algorithm~1, the light blue shaded areas represent the results for $50$ random instances, and the dark blue curves represent their average.}
\label{fig:numerical_alg1_alg2}
\end{figure}

Figure~\ref{fig:numerical_alg1_alg2} shows the convergence behavior of Algorithm~1 and Algorithm~2, where the top figure illustrates the squared norm of the gradient at $\bar x(t)$, and the bottom figure illustrates the consensus error $\frac{1}{n}\sum_{i=1}^n \|x^i(t)-\bar x(t)\|^2$. The horizontal axis has been normalized as the number of function value queries $m$. We can see that Algorithm 1 converges faster during the initial stage, but then slows down and converges at a relatively stable sublinear rate. The convergence of Algorithm 2 is relatively slow initially, but then becomes faster as $m\gtrsim 0.5\times 10^4$, and when $m\gtrsim 2\times 10^4$, Algorithm~2 achieves smaller squared gradient norm and consensus error compared to Algorithm~1; the convergence slows down as $m$ exceeds $2.5\times 10^4$ but is still faster than Algorithm~1. Further investigation of the simulation results suggests that the speed-up of Algorithm~2 within $0.5\times 10^4\lesssim m\lesssim 2.5\times 10^4$ is due to $\bar{x}(t)$ becoming sufficiently close to a local optimal, around which the objective function is locally strongly convex; the slow-down after $m$ exceeds $2.5\times 10^4$ is caused by the zero-order gradient estimation error that becomes dominant, and can be postponed or avoided if we let $u_t$ decrease more aggressively.

From these results, it can be seen that, if the total number of function value queries is limited by, say $m\lesssim 1.5\times 10^4$, then Algorithm 1 might be favorable compared to Algorithm 2 despite slower asymptotic convergence rate, while if more function value queries are allowed, then Algorithm 2 could be favored. We observe that this is related with the discussion in Section \ref{subsec:compare}.

\subsection{Comparison with Other Algorithms}

\begin{figure}[t]
    \centering
\includegraphics[width=.55\textwidth]{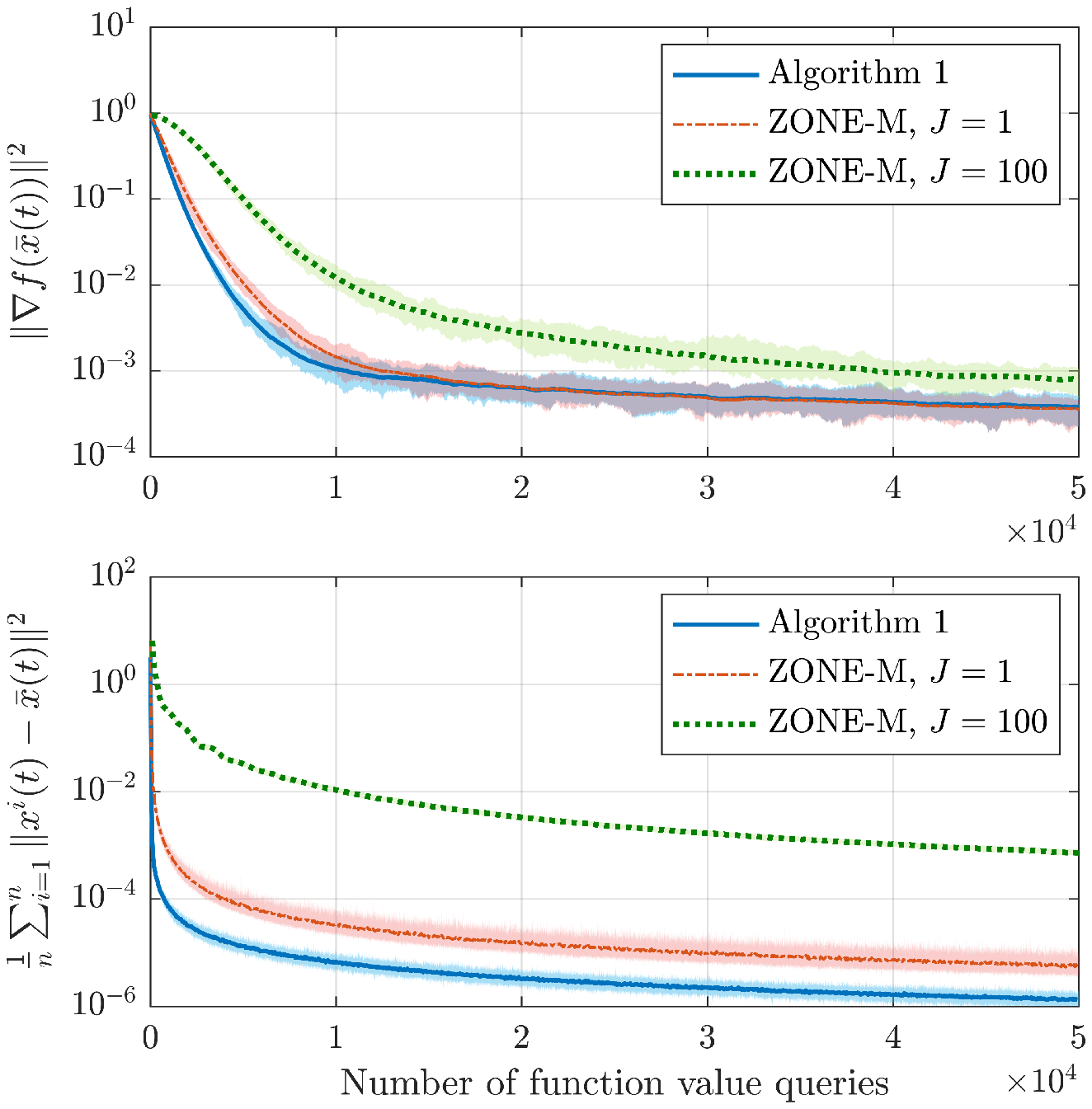}
\caption{Convergence of Algorithm 1 and ZONE-M with $J=1$ and $J=100$. For each algorithm, the light shaded areas represent the results for $50$ random instances, and the dark curves represent their average.}
\label{fig:numerical_alg1_zone}
\end{figure}

Figure~\ref{fig:numerical_alg1_zone} compares the convergence of Algorithm 1 and the two setups of ZONE-M, including the curves for the squared norm of the gradient $\|\nabla f(\bar x(t))\|^2$ and the consensus error $\frac{1}{n}\sum_{i=1}^n \|x^i(t)-\bar x(t)\|^2$. The horizontal axis has been normalized as the number of function value queries $m$. It can be seen that Algorithm 1 and ZONE-M with $\rho_t\propto\sqrt{t}, J=1$ have similar convergence behavior. For ZONE-M with $\rho_t\propto\sqrt{t}$ and $J=100$, while the convergence of $\|\nabla f(\bar x(t))\|^2$ is comparable with Algorithm 1 and ZONE-M with $J=1$, the consensus error decreases much slower, as ZONE-M with $J=100$ conducts much fewer consensus averaging steps per function value query compared to Algorithm~1 and ZONE-M with $J=1$.

\begin{figure}[th]
    \centering
\includegraphics[width=.55\textwidth]{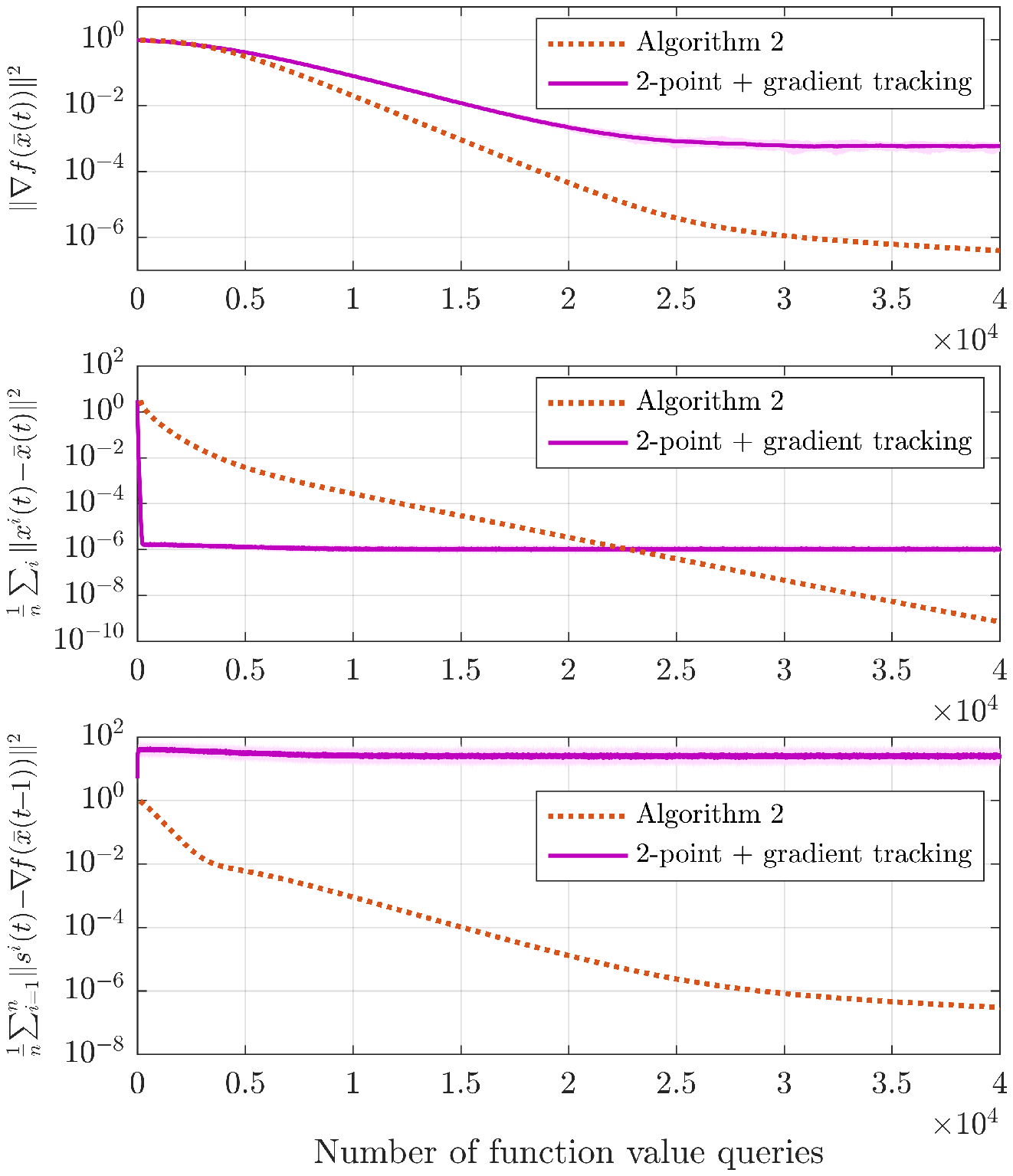}
\caption{Convergence of Algorithm 2 and $2$-point estimator combined with gradient tracking. For $2$-point estimator combined with gradient tracking, the light pink shaded areas represent the results for $50$ random instances, and the dark purple curves represent their average.}
\label{fig:numerical_alg2_comb}
\end{figure}

Figure~\ref{fig:numerical_alg2_comb} compares the convergence of Algorithm~2 and the $2$-point estimator combined with gradient tracking \eqref{eq:alg_2p_grad_track}, including the curves for the squared norm of the gradient $\|\nabla f(\bar x(t))\|^2$, the consensus error $\frac{1}{n}\sum_{i=1}^n \|x^i(t)-\bar x(t)\|^2$ and also the gradient tracking error $\frac{1}{n}\sum_{i=1}^n \|s^i(t)-\nabla f(\bar x(t\!-\!1))\|^2$. It's straightforward to see that Algorithm~2 has better asymptotic convergence behavior than the $2$-point estimator combined with gradient tracking. Moreover, for the $2$-point estimator combined with gradient tracking, the gradient tracking error does not converge to zero but remains at a constant level, indicating that the gradient tracking technique is ineffective in this case. These observations are in accordance with our theoretical discussion in Section~\ref{subsec:compare}.

\section{Conclusion}

We proposed two distribtued zero-order algorithms for nonconvex multi-agent optimization, established theoretical results on their convergence rates, and showed that they achieve comparable performance with their distributed gradient-based or centralized zero-order counterparts. We also provided a brief discussion on how the dimension of the problem will affect their performance in practice. There are many lines of future work, such as 1) introducing noise or errors when evaluating $f_i(x)$, 2) investigating how to escape from saddle-point for distributed zero-order methods, 3) extension to nonsmooth problems, 4) investigating whether the step sizes can be independent of the network topology, 5) studying time-varying graphs, and 6) investigating the fundamental gap between centralized methods and distributed methods, especially for high-dimensional problems.

\appendix

\section{Auxiliary Results for Convergence Analysis}

Recall that $W\in\mathbb{R}^{n\times n}$ is a consensus matrix that satisfies Assumption~\ref{assumption:consensus_weight} in the main text, and
$$
\rho :=
\left\|W-n^{-1}\mathbf{1}_n\mathbf{1}_n^\top\right\|<1.
$$
The following lemma is a standard result in consensus optimization.
\begin{lemma}\label{lemma:consensus_contraction}
For any $x^1,\ldots,x^n\in\mathbb{R}^{d}$, we have
$$
\|(W\otimes I_d)(x-\mathbf{1}_n\otimes\bar x)\|
\leq\rho \|x-\mathbf{1}_n\otimes\bar x\|,
$$
where we denote
$$
x=\begin{bmatrix}
x_1 \\ \vdots \\ x_n
\end{bmatrix},
\qquad
\bar x=\frac{1}{n}\sum_{i=1}^n x^i.
$$
\end{lemma}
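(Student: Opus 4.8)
The plan is to exploit the fact that the error vector $e\coloneqq x-\mathbf{1}_n\otimes\bar x$ lies in the orthogonal complement of the consensus subspace, on which $W$ acts exactly like $W-n^{-1}\mathbf{1}_n\mathbf{1}_n^\top$. Writing $\Pi\coloneqq n^{-1}\mathbf{1}_n\mathbf{1}_n^\top$, my first step is to recognize the block-averaging operator: computing the $i$-th block of $(\Pi\otimes I_d)x$ gives $\sum_j n^{-1}x^j=\bar x$, so $\mathbf{1}_n\otimes\bar x=(\Pi\otimes I_d)x$ and hence $e=((I_n-\Pi)\otimes I_d)x$.

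Next I would record the two algebraic consequences of Assumption~\ref{assumption:consensus_weight}: double stochasticity gives $W\mathbf{1}_n=\mathbf{1}_n$, so that $W\Pi=n^{-1}(W\mathbf{1}_n)\mathbf{1}_n^\top=\Pi$, while idempotence gives $\Pi^2=\Pi$. Combined with the previous step, these yield $(\Pi\otimes I_d)e=((\Pi-\Pi^2)\otimes I_d)x=0$; that is, $e$ carries no component along the consensus direction.

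The crucial step is then to substitute $W-\Pi$ for $W$ when acting on $e$. Since $W\otimes I_d=((W-\Pi)\otimes I_d)+(\Pi\otimes I_d)$ and $(\Pi\otimes I_d)e=0$, we obtain $(W\otimes I_d)e=((W-\Pi)\otimes I_d)e$. Taking norms and invoking submultiplicativity together with the Kronecker-norm identity $\|(W-\Pi)\otimes I_d\|=\|W-\Pi\|$ gives $\|(W\otimes I_d)e\|\le\|W-\Pi\|\,\|e\|=\rho\|e\|$ by the definition \eqref{eq:def_rho_net} of $\rho$. I expect no genuine obstacle here: the only steps requiring care are the Kronecker-product bookkeeping—verifying $\mathbf{1}_n\otimes\bar x=(\Pi\otimes I_d)x$ and the norm identity $\|A\otimes I_d\|=\|A\|$—and the observation that $e$ is orthogonal to the consensus subspace, after which the contraction follows with no quantitative estimation.
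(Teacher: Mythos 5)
Your proof is correct: the identification $\mathbf{1}_n\otimes\bar x=(\Pi\otimes I_d)x$ with $\Pi\coloneqq n^{-1}\mathbf{1}_n\mathbf{1}_n^\top$, the identities $W\Pi=\Pi$ (from $W\mathbf{1}_n=\mathbf{1}_n$) and $\Pi^2=\Pi$ yielding $(\Pi\otimes I_d)e=0$, and the norm identity $\|(W-\Pi)\otimes I_d\|=\|W-\Pi\|=\rho$ all check out. The paper omits the proof entirely, citing the lemma as a standard result in consensus optimization, and your argument is precisely the standard one being invoked.
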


The following lemma provides a useful property of smooth functions.
\begin{lemma}\label{lemma:gradient_upper_bound}
Suppose $f:\mathbb{R}^p\rightarrow\mathbb{R}$ is $L$-smooth and $\inf_{x\in\mathbb{R}^p} f(x)=f^\ast>-\infty$. Then
$$
\|\nabla f(x)\|^2\leq 2L(f(x)-f^\ast).
$$
\end{lemma}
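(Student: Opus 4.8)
The plan is to derive the bound from the standard quadratic upper bound furnished by $L$-smoothness, then to optimize the resulting inequality over a cleverly chosen point and invoke the global lower bound $f^\ast$.

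First I would recall the descent lemma: since $f$ is $L$-smooth, integrating $\nabla f$ along the segment between $x$ and an arbitrary point $y$ yields the inequality
$$
f(y)\leq f(x)+\nabla f(x)^\top(y-x)+\frac{L}{2}\|y-x\|^2
\qquad\forall\,x,y\in\mathbb{R}^p.
$$
This is the only consequence of smoothness we need. The key idea is that the right-hand side, viewed as a function of $y$, is a convex quadratic whose unique minimizer is $y=x-\frac{1}{L}\nabla f(x)$.

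Next I would substitute this minimizing choice of $y$ into the descent inequality. Evaluating the linear and quadratic terms gives $\nabla f(x)^\top(y-x)=-\frac{1}{L}\|\nabla f(x)\|^2$ and $\frac{L}{2}\|y-x\|^2=\frac{1}{2L}\|\nabla f(x)\|^2$, so the two terms combine to $-\frac{1}{2L}\|\nabla f(x)\|^2$ and we obtain
$$
f\!\left(x-\tfrac{1}{L}\nabla f(x)\right)\leq f(x)-\frac{1}{2L}\|\nabla f(x)\|^2.
$$
Finally, since $f^\ast=\inf_{x}f(x)$ is a global lower bound, the left-hand side is at least $f^\ast$; substituting $f^\ast\leq f(x)-\frac{1}{2L}\|\nabla f(x)\|^2$ and rearranging yields $\|\nabla f(x)\|^2\leq 2L(f(x)-f^\ast)$, which is the claim.

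There is no real obstacle here, as this is a classical estimate; the only point requiring a little care is justifying the descent lemma itself, but since $L$-smoothness is exactly the Lipschitz continuity of $\nabla f$, the standard one-line integral argument $f(y)-f(x)=\int_0^1\nabla f(x+s(y-x))^\top(y-x)\,ds$ combined with the Cauchy--Schwarz and Lipschitz bounds suffices. The choice of the minimizing $y$ is what makes the constants line up to give exactly $2L$.
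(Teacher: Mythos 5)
Your proof is correct and is essentially identical to the paper's: the paper's one-line argument is precisely $f^\ast \leq f\bigl(x - L^{-1}\nabla f(x)\bigr) \leq f(x) - \frac{1}{2L}\|\nabla f(x)\|^2$, i.e., the descent lemma evaluated at the quadratic-minimizing point $y = x - \frac{1}{L}\nabla f(x)$ followed by the global lower bound. Your additional remarks on justifying the descent lemma via the integral representation of $f(y)-f(x)$ are a sound (if implicit in the paper) completion of the same route.
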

\begin{proof}
The $L$-smoothness of $f$ implies
$$
f^\ast\leq f(x-L^{-1}\nabla f(x))
\leq f(x)-\frac{1}{2L}\|\nabla f(x)\|^2.
$$
\end{proof}
For a $\mu$-gradient dominated and $L$-smooth function, we can see from Lemma \ref{lemma:gradient_upper_bound} that $\mu\leq L$.

The following lemma will be used to establish convergence of the proposed algorithms.
\begin{lemma}[\cite{robbins1971convergence}]
\label{lemma:martingale_converge}
Let $(\Omega,\mathcal{F},\mathbb{P})$ be a probability space and $(\mathcal{F}_t)_{t\in\mathbb{N}}$ be a filtration. Let $U(t), \xi(t)$ and $\zeta(t)$ be nonnegative $\mathcal{F}_t$-measurable random variables for $t\in\mathbb{N}$ such that
$$
\mathbb{E}\!\left[U(t+1)|\mathcal{F}_{t}\right]
\leq U(t)+\xi(t)-\zeta(t),\qquad
\forall t=0,1,2,\ldots
$$
Then almost surely on the event $\{\sum_{t=0}^\infty \xi(t)<+\infty\}$, $U(t)$ converges to a random variable and $\sum_{t=0}^\infty \zeta(t)<+\infty$.

As a special case, let $U_t$, $\xi_t$ and $\zeta_t$ be (deterministic) nonnegative sequences for $t\in\mathbb{N}$ such that
$$
U_{t+1}
\leq U_t+\xi_t-\zeta_t,
$$
with $\sum_{t=0}^\infty\xi_t<+\infty$. Then $U_t$ converges and $\sum_{t=0}^\infty\zeta_t<+\infty$.
\end{lemma}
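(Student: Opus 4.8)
The plan is to deduce the statement from the classical nonnegative supermartingale convergence theorem. The obstacle is that the natural process built from $U(t)$ is only a supermartingale up to the additive perturbation $\xi(t)$, and after absorbing that perturbation the process need not be bounded below; I would therefore localize with a family of stopping times that cap the accumulated perturbation.

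First I would introduce the compensated process
$$
T_t \coloneqq U(t) + \sum_{s=0}^{t-1}\bigl(\zeta(s)-\xi(s)\bigr),
$$
and check directly from the hypothesis $\mathbb{E}[U(t+1)\mid\mathcal{F}_t]\leq U(t)+\xi(t)-\zeta(t)$ that the added telescoping terms exactly cancel the drift, giving $\mathbb{E}[T_{t+1}\mid\mathcal{F}_t]\leq T_t$. Thus $(T_t)$ is a supermartingale. It is not bounded below in general, since $\sum_{s<t}\xi(s)$ may be large, so the convergence theorem cannot be applied to $(T_t)$ directly.

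The hard part is precisely this lack of a lower bound, and I would resolve it by localization. For each $a>0$ define the stopping time
$$
\tau_a \coloneqq \inf\Bigl\{\, t\geq 0 : \sum_{s=0}^{t}\xi(s) > a \,\Bigr\},
$$
with $\inf\emptyset=+\infty$. By construction $\sum_{s=0}^{(t\wedge\tau_a)-1}\xi(s)\leq a$ for every $t$, and since $U(t)\geq 0$ and $\zeta(s)\geq 0$ this forces $T_{t\wedge\tau_a}\geq -a$. Because a stopped supermartingale is again a supermartingale, $T_{t\wedge\tau_a}+a$ is a nonnegative supermartingale and therefore converges almost surely to a finite limit. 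Writing $\{\sum_s\xi(s)<+\infty\}=\bigcup_{a\in\mathbb{N}}\{\sum_s\xi(s)<a\}$ and noting that $\tau_a=+\infty$ on $\{\sum_s\xi(s)<a\}$, on which $T_t=T_{t\wedge\tau_a}$, I would conclude that $T_t$ converges almost surely on the event $\{\sum_s\xi(s)<+\infty\}$.

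Finally I would unwind the definition of $T_t$ on that event. There the partial sums $\sum_{s<t}\xi(s)$ converge, so $U(t)+\sum_{s<t}\zeta(s)$ converges to a finite limit. Setting $A_t\coloneqq\sum_{s<t}\zeta(s)$, which is nondecreasing because $\zeta(s)\geq 0$, I would argue that if $A_t\to+\infty$ then $U(t)=\bigl(U(t)+A_t\bigr)-A_t\to-\infty$, contradicting $U(t)\geq 0$; hence $\sum_s\zeta(s)<+\infty$, and consequently $U(t)$ converges. The deterministic special case is identical with $\mathcal{F}_t$ taken trivial, in which case the supermartingale inequality degenerates to the pointwise recursion $T_{t+1}\leq T_t$ and the same telescoping-and-unwinding argument applies verbatim.
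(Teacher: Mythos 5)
Your proof is correct, but there is nothing in the paper to compare it against: the paper does not prove this lemma at all, it imports it as a known result of Robbins and Siegmund \cite{robbins1971convergence} and uses it as a black box. What you have written is essentially a reconstruction of the classical proof of that theorem, and every step checks out: the compensated process $T_t = U(t) + \sum_{s<t}(\zeta(s)-\xi(s))$ is indeed a supermartingale (the verification uses that $\xi(s),\zeta(s)$ are $\mathcal{F}_t$-measurable for $s\le t$, which the hypotheses provide); the stopping times $\tau_a$ are genuine stopping times and restore the lower bound $T_{t\wedge\tau_a}\ge -a$; the stopped process is again a supermartingale; and writing $\{\sum_t\xi(t)<\infty\}=\bigcup_{a\in\mathbb{N}}\{\sum_t\xi(t)<a\}$ with $\tau_a=\infty$ on each piece correctly transfers the convergence to the desired event. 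The unwinding step is also sound: monotonicity of $A_t=\sum_{s<t}\zeta(s)$ together with $U(t)\ge 0$ rules out $A_t\to\infty$, so $\sum_t\zeta(t)<\infty$ and then $U(t)$ converges as a difference of convergent sequences. One technical caveat worth noting: the nonnegative-supermartingale convergence theorem you invoke presumes integrability, which is not among the lemma's hypotheses ($U(0)$ need not have finite expectation). This is repaired by one more round of the same localization idea: intersect with the $\mathcal{F}_0$-events $\{U(0)\le b\}$, $b\in\mathbb{N}$, on which the stopped, shifted process has expectation at most $a+b$ at every time, then take a further countable union. With that standard patch your argument is complete and self-contained, which is exactly what it buys over the paper's citation-only treatment; the paper's choice, in turn, keeps the appendix short at the cost of relying on the reader to consult \cite{robbins1971convergence}.
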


We will extensively use the following properties of the distribution $\mathcal{U}(\mathbb{S}_{d-1})$:
\begin{equation}\label{eq:alg1:basic_uniform_sphere}
\mathbb{E}_{z\sim\mathcal{U}(\mathbb{S}_{d-1})}
\!
\left[d\cdot \langle g,z\rangle z\right]
=g,
\qquad
\mathbb{E}_{z\sim\mathcal{U}(\mathbb{S}_{d-1})}
\!
\left[d\cdot \langle g,z\rangle^2\right]
=\|g\|^2
\end{equation}
for any (deterministic) $g\in\mathbb{R}^d$.

The following results discuss the bias and the second moment of the $2$-point gradient estimator.

\begin{lemma}\label{lemma:alg1:bounds_func}
\begin{enumerate}
\item Let $u> 0$ be arbitrary, and suppose $f:\mathbb{R}^d\rightarrow\mathbb{R}$ is differentiable. Then
\begin{equation}\label{eq:alg1:grad_smoothed_ver}
\mathbb{E}_{z\sim\mathcal{U}(\mathbb{S}_{d-1})}
\!\!
\left[\mathsf{G}^{(2)}_f(x;u,z)\right]
=\nabla f^u(x).
\end{equation}
where $f^u(x):=\mathbb{E}_{y\sim\mathcal{U}(\mathbb{B}_d)}\left[
f(x+uy)
\right]$. Moreover, if $f$ is $L$-smooth, then $f^u$ is also $L$-smooth.

\item Suppose $f:\mathbb{R}^d\rightarrow\mathbb{R}$ is $L$-Lipschitz, and let $u$ be positive. Then for any $x\in\mathbb{R}^d$ and $h\in\mathbb{R}^d$, we have
\begin{equation}\label{eq:alg1:finite_difference}
\left|\frac{f(x+uh)-f(x-uh)}{2u}-\langle \nabla f(x),h\rangle\right|
\leq
\frac{1}{2}uL\|h\|^2.
\end{equation}
In addition,
\begin{equation}\label{eq:alg1:bound_diff_smooth_grad}
\|\nabla f(x)-\nabla f^u(x)\|
\leq uL.
\end{equation}

\item {\cite[Lemma 10]{shamir2017optimal}} Suppose $f:\mathbb{R}^d\rightarrow\mathbb{R}$ is $G$-Lipschitz. Then for any $x\in\mathbb{R}^d$ and $u> 0$,
\begin{equation}\label{eq:alg1:bound_Eg2}
\mathbb{E}_{z\sim\mathcal{U}(\mathbb{S}_{d-1})}
\!\!\left[
\left\|\mathsf{G}^{(2)}_f(x;u,z)\right\|^2\right]
\leq \kappa^2 G^2 d
\end{equation}
where $\kappa>0$ is some numerical constant.

\item Suppose $f:\mathbb{R}^d\rightarrow\mathbb{R}$ is $L$-smooth. Then for any $x\in\mathbb{R}^d$ and $u> 0$,
\begin{equation}\label{eq:alg1:bound_Eg2_ver2}
\mathbb{E}_{z\sim\mathcal{U}(\mathbb{S}_{d-1})}
\!\!\left[
\left\|\mathsf{G}^{(2)}_f(x;u,z)\right\|^2\right]
\leq
\frac{4d}{3}\|\nabla f(x)\|^2
+u^2L^2d^2.
\end{equation}

\end{enumerate}
\end{lemma}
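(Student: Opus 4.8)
The plan is to reduce the second moment to a scalar quantity and then split that scalar into a gradient term and a finite-difference error term via a weighted Young's inequality, with the weight chosen to reproduce the stated constants exactly. First, since $\|z\|=1$ for every $z\in\mathbb{S}_{d-1}$, the definition \eqref{eq:2point_est} gives
$$
\left\|\mathsf{G}^{(2)}_f(x;u,z)\right\|^2=d^2\,D(z)^2,
\qquad
D(z)\coloneqq \frac{f(x+uz)-f(x-uz)}{2u},
$$
so it suffices to bound $\mathbb{E}_z[D(z)^2]$. Because $f$ is $L$-smooth, the finite-difference estimate \eqref{eq:alg1:finite_difference} applies with $h=z$ and $\|z\|^2=1$, so we may write $D(z)=\langle\nabla f(x),z\rangle+\epsilon(z)$ with $|\epsilon(z)|\leq \tfrac12 uL$.

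Next I would apply the elementary inequality $(a+b)^2\leq (1+\theta)a^2+(1+\theta^{-1})b^2$ with $\theta=\tfrac13$, which yields $D(z)^2\leq \tfrac43\langle\nabla f(x),z\rangle^2+4\,\epsilon(z)^2$. Taking $\mathbb{E}_{z\sim\mathcal{U}(\mathbb{S}_{d-1})}$ and invoking the second identity in \eqref{eq:alg1:basic_uniform_sphere}, namely $\mathbb{E}_z[\langle\nabla f(x),z\rangle^2]=\tfrac1d\|\nabla f(x)\|^2$, together with $\mathbb{E}_z[\epsilon(z)^2]\leq \tfrac14u^2L^2$, gives
$$
\mathbb{E}_z\!\left[D(z)^2\right]\leq \frac{4}{3d}\|\nabla f(x)\|^2+u^2L^2.
$$
Multiplying through by $d^2$ recovers the claimed bound.

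This argument is entirely routine; the only step requiring any thought is the calibration $\theta=\tfrac13$, which is precisely what forces the gradient coefficient to equal $4/3$ while turning the factor $\tfrac14$ coming from $|\epsilon(z)|^2$ into $4\cdot\tfrac14=1$, so that the error term becomes exactly $u^2L^2d^2$. A cruder choice (such as $\theta=1$) would inflate both constants, so no genuine obstacle arises beyond this bookkeeping.
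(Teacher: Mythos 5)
Your argument for the second-moment bound \eqref{eq:alg1:bound_Eg2_ver2} is correct, and it is essentially the paper's own proof of that part: the paper decomposes $\mathsf{G}^{(2)}_f(x;u,z)$ into the gradient term $d\langle\nabla f(x),z\rangle z$ plus the finite-difference error term, applies the same weighted Young's inequality with weights $1+\tfrac{1}{3}$ on the gradient term and $1+3$ on the error term, and then uses the sphere moment identity $\mathbb{E}_z\!\left[d\langle g,z\rangle^2\right]=\|g\|^2$ together with \eqref{eq:alg1:finite_difference}. Your factoring out of the scalar $D(z)$ (legitimate since $\|z\|=1$) is only a cosmetic difference, and your calibration $\theta=\tfrac{1}{3}$ reproduces exactly the paper's constants.

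The gap is one of coverage: the statement is a four-part lemma, and your proposal addresses only part 4. In particular, your proof invokes \eqref{eq:alg1:finite_difference}, which is itself part 2 of the lemma and requires proof; the paper establishes it by writing
$$
\frac{f(x+uh)-f(x-uh)}{2u}-\langle\nabla f(x),h\rangle
=\frac{1}{2}\int_{-1}^1\langle\nabla f(x+ush)-\nabla f(x),h\rangle\,ds
$$
and bounding the integrand by $Lu|s|\|h\|^2$ using smoothness (note that the hypothesis of part 2 should read ``$L$-smooth''; the stated ``$L$-Lipschitz'' is a typo, which your application implicitly corrects). Part 1 (unbiasedness and the $L$-smoothness of $f^u$) likewise needs a short argument, which the paper obtains from \cite{flaxman2005online} and by averaging $\nabla f$ over the ball, while part 3 is cited from \cite{shamir2017optimal} without proof. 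So what you have written is a correct, paper-identical proof of part 4, but as a proof of the lemma it is incomplete: the ingredient you borrow (part 2) and the remaining parts still need to be supplied.
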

\begin{proof}
\begin{enumerate}
\item The equality \eqref{eq:alg1:grad_smoothed_ver} follows from \cite[Lemma 1]{flaxman2005online} and the fact that the distribution $\mathcal{U}(\mathbb{S}_{d-1})$ has zero mean. When $f$ is $L$-smooth, we have
$$
\begin{aligned}
\left\|\nabla f^u(x_1)- \nabla f^u(x_2)\right\|
& =
\left\|\frac{1}{\int_{\mathbb{B}_d}\,dy}\int_{\mathbb{B}_d}
\left(\nabla f(x_1+uy)
-\nabla f(x_2+uy)
\right)\,dy\right\| \\
&\leq
\frac{1}{\int_{\mathbb{B}_d}\,dy}\int_{\mathbb{B}_d}
\|\nabla f(x_1+uy)-\nabla f(x_2+uy)\|\,dy
\leq L \|x_1-x_2\|
\end{aligned}
$$
for any $x_1,x_2\in\mathbb{R}^d$.

\item[2.]
We have
$$
\begin{aligned}
&\left|\frac{f(x+uh)-f(x-uh)}{2u}-\langle \nabla f(x),h\rangle\right|
=
\left|\frac{1}{2u}\int_{-1}^1\langle\nabla f(x+ush),uh\rangle\,ds
-\langle \nabla f(x),h\rangle\right| \\
=\ &
\frac{1}{2}\left|\int_{-1}^1\langle\nabla f(x+ush)-\nabla f(x),h\rangle\,ds\right|
\leq
\frac{1}{2}
\int_{-1}^1 Lu|s|\|h\|^2\,ds
=\frac{1}{2}uL\|h\|^2,
\end{aligned}
$$
and
$$
\begin{aligned}
\left\|\nabla f(x)-\nabla f^u(x)\right\|
=\ &
\left\|
\frac{1}{\int_{\mathbb{B}_d}\,dy}
\int_{\mathbb{B}_d}
(\nabla f(x)-\nabla f(x+uy))\,dy
\right\|
\leq
\frac{uL}{\int_{\mathbb{B}_d}\,dy}
\int_{\mathbb{B}_d}\|y\|\,dy
\leq uL.
\end{aligned}
$$

\item[4.] We have
$$
\begin{aligned}
& \mathbb{E}_{z\sim\mathcal{U}(\mathbb{S}_{d-1})}\!\left[
\left\|\mathsf{G}_f^{(2)}(x;u,z)\right\|^2\right] \\
=\ &
\mathbb{E}_{z\sim\mathcal{U}(\mathbb{S}_{d-1})}\!\left[
\left\|
d\!\left(\!\frac{f(x\!+\!uz)-f(x\!-\!uz)}{2u}-\langle \nabla f(x),z\rangle\!\right)\! z
+d\langle \nabla f(x),z\rangle z
\right\|^2\right] \\
\leq\ &
(1\!+\!3)\,\mathbb{E}_{z\sim\mathcal{U}(\mathbb{S}_{d-1})}\!\left[d^2
\left|
\frac{f(x \!+\! uz) \!-\! f(x \!-\! uz)}{2u}
-\langle\nabla f(x),z\rangle
\right|^2 \! \|z\|^2\right]\!
+\left(\!1\!+\!\frac{1}{3}\right)
\mathbb{E}_{z\sim\mathcal{U}(\mathbb{S}_{d-1})}\!\left[
\left\|
d\langle\nabla f(x),z\rangle z\right\|^2\right] \\
\leq\ &
4d^2\cdot \frac{1}{4}u^2L^2
+\frac{4d}{3}\|\nabla f(x)\|^2
=\frac{4d}{3}\|\nabla f(x)\|^2
+u^2L^2d^2.
\end{aligned}
$$
\end{enumerate}
\end{proof}

We will also use the following inequalities:
\begin{equation}\label{eq:lower_bound_sum_poly}
\sum_{t=t_1}^{t_2} \frac{1}{t^{\epsilon}}\geq
\int_{t_1}^{t_2+1}\frac{ds}{s^\epsilon}=
\frac{(t_2+1)^{1-\epsilon}-t_1^{1-\epsilon}}{1-\epsilon},
\end{equation}
and
\begin{equation}\label{eq:upper_bound_sum_poly}
\sum_{t=t_1}^{t_2} \frac{1}{t^{\epsilon}}\leq
\left\{
\begin{aligned}
& 1 + \int_{3/2}^{t_2+1/2}\frac{ds}{s^\epsilon}=
1+\frac{(t_2+1/2)^{1-\epsilon}-(3/2)^{1-\epsilon}}{1-\epsilon}, & \quad  & t_1=1,\\
& \int_{t_1-1/2}^{t_2+1/2}\frac{ds}{s^\epsilon}
=\frac{(t_2+1/2)^{1-\epsilon}-(t_1-1/2)^{1-\epsilon}}{1-\epsilon}, & \quad &
t_1>1.
\end{aligned}
\right.
\end{equation}
where $\epsilon>0$ and $\epsilon\neq 1$, and
\begin{equation}\label{eq:bound_harmonic_number}
\ln \frac{t_2+1}{t_1}
= \int_{t_1}^{t_2+1}\frac{ds}{s}
\leq
\sum_{t=t_1}^{t_2}\frac{1}{t}
\leq \int_{t_1-1/2}^{t_2+1/2}\frac{ds}{s}
=\ln\frac{2t_2+1}{2t_1-1}.
\end{equation}
Especially, when $\epsilon>1$, we have
\begin{equation}\label{eq:zeta_upperbound}
\sum_{t=1}^\infty \frac{1}{t^{\epsilon}}\leq
1+
\int_{3/2}^\infty\frac{ds}{s^\epsilon}=
1+\frac{1}{(\epsilon-1)(3/2)^{\epsilon-1}}
\leq\frac{\epsilon}{\epsilon-1}.
\end{equation}
Finally, we note that
\begin{equation}\label{eq:conv_exp_poly_asymp}
\sum_{\tau=0}^{t-1}\frac{\lambda^\tau}{(t-\tau)^\epsilon}=\frac{1}{(1-\lambda)t^\epsilon}+o(t^{-\epsilon})
\end{equation}
for any $\lambda\in(0,1)$ and $\epsilon>0$.

\section{Proof of Theorem~\ref{theorem:alg1_non_grad_dom}}

Let $(\mathcal{F}_t)_{t\in \mathbb{N}}$ be a filtration such that $(z^i(t),x^i(t))$ is $\mathcal{F}_t$-measurable for each $t\geq 1$. We denote
$$
x(t)=\begin{bmatrix}
x^1(t) \\
\vdots \\
x^n(t)
\end{bmatrix},
\quad
g(t)=\begin{bmatrix}
g^1(t) \\
\vdots \\
g^n(t)
\end{bmatrix},
\qquad
\bar x(t)=\frac{1}{n}\sum_{i=1}^n x^i(t),
\quad
\bar g(t)=\frac{1}{n}\sum_{i=1}^n g^i(t),
$$
and $\delta(t)= f(\bar x(t))-f^\ast$, $e_{\mathrm{c}}(t)
= \mathbb{E}\!\left[
\|x(t)-\mathbf{1}_n\otimes\bar{x}(t)\|^2
\right]
$.
We can see that the iterations of Algorithm~\ref{alg:2point} can be equivalently written as
$$
x(t)=(W\otimes I_d)(x(t-1)
-\eta_t g(t)),
\qquad
\bar x(t)=\bar x(t-1)-\eta_t\bar g(t).
$$
We recall that each $f_i$ is assumed to be $G$-Lipschitz and $L$-smooth.

First, we analyze how the objective value at the averaged iterate $f(\bar x(t))$ evolves as the iterations proceed.
\begin{lemma}\label{lemma:alg1:basic_ineq}
We have
\begin{equation}\label{eq:alg1:lemma_basic_ineq}
\begin{aligned}
\mathbb{E}\!\left[
f(\bar x(t))|\mathcal{F}_{t-1}\right]
\leq\ &
f(\bar x(t-1))
-\frac{\eta_t}{2}\|\nabla f(\bar x(t-1))\|^2
+\frac{\eta_t L^2}{n}
\|x(t-1)-\mathbf{1}_n\otimes\bar x(t-1)\|^2 \\
&+\frac{\eta_t^2 L}{2}\mathbb{E}\!\left[\|\bar g(t)\|^2|\mathcal{F}_{t-1}\right]
+\eta_tu_t^2L^2.
\end{aligned}
\end{equation}
\end{lemma}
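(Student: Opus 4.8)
The plan is to combine the descent lemma for the $L$-smooth global objective with the unbiasedness of the two-point estimator relative to the smoothed function $f_i^{u_t}$, and then split the resulting first-order error into a consensus part and a smoothing-bias part. Since $f = \frac1n\sum_i f_i$ is $L$-smooth and $\bar x(t) = \bar x(t-1) - \eta_t \bar g(t)$, the standard quadratic upper bound gives
$$
f(\bar x(t)) \le f(\bar x(t-1)) - \eta_t\langle \nabla f(\bar x(t-1)), \bar g(t)\rangle + \frac{\eta_t^2 L}{2}\|\bar g(t)\|^2 .
$$
Taking $\mathbb{E}[\,\cdot\mid \mathcal{F}_{t-1}]$ and using that the $z^i(t)$ are independent of $\mathcal{F}_{t-1}$, Proposition~\ref{prop:basic_lemma_2point} (equivalently \eqref{eq:alg1:grad_smoothed_ver}) yields $\mathbb{E}[\bar g(t)\mid\mathcal{F}_{t-1}] = \frac1n\sum_{i=1}^n \nabla f_i^{u_t}(x^i(t-1))$, so only the linear term needs care while $\mathbb{E}[\|\bar g(t)\|^2\mid\mathcal{F}_{t-1}]$ is carried along unevaluated.

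Next I would treat the cross term. Write $g := \nabla f(\bar x(t-1))$ and $b := g - \mathbb{E}[\bar g(t)\mid\mathcal{F}_{t-1}]$. Then $-\langle g, \mathbb{E}[\bar g(t)\mid\mathcal{F}_{t-1}]\rangle = -\|g\|^2 + \langle g,b\rangle$, and Young's inequality $\langle g,b\rangle \le \tfrac12\|g\|^2 + \tfrac12\|b\|^2$ converts this into $-\tfrac12\|\nabla f(\bar x(t-1))\|^2 + \tfrac12\|b\|^2$, producing the desired $-\tfrac{\eta_t}{2}\|\nabla f(\bar x(t-1))\|^2$ term plus a residual $\tfrac{\eta_t}{2}\|b\|^2$ to be absorbed.

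The only substantive estimate is the bound on $\|b\|^2$. Writing $b = \frac1n\sum_i [\nabla f_i(\bar x(t-1)) - \nabla f_i^{u_t}(x^i(t-1))]$ and inserting the intermediate point $\nabla f_i(x^i(t-1))$, I split each summand into $[\nabla f_i(\bar x(t-1)) - \nabla f_i(x^i(t-1))] + [\nabla f_i(x^i(t-1)) - \nabla f_i^{u_t}(x^i(t-1))]$. Jensen's inequality on the outer average followed by $\|a+c\|^2 \le 2\|a\|^2 + 2\|c\|^2$ reduces matters to two familiar bounds: the first bracket is at most $2L^2\|x^i(t-1) - \bar x(t-1)\|^2$ by $L$-smoothness of $f_i$, and the second is at most $2u_t^2 L^2$ by the smoothing-bias estimate \eqref{eq:alg1:bound_diff_smooth_grad}. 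This gives $\|b\|^2 \le \frac{2L^2}{n}\|x(t-1) - \mathbf{1}_n\otimes\bar x(t-1)\|^2 + 2u_t^2 L^2$, so $\tfrac{\eta_t}{2}\|b\|^2$ is exactly the consensus term $\frac{\eta_t L^2}{n}\|x(t-1)-\mathbf{1}_n\otimes\bar x(t-1)\|^2$ plus $\eta_t u_t^2 L^2$; together with the retained $\frac{\eta_t^2 L}{2}\mathbb{E}[\|\bar g(t)\|^2\mid\mathcal{F}_{t-1}]$ this is precisely \eqref{eq:alg1:lemma_basic_ineq}.

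The main obstacle is not any individual inequality---each is standard---but the bookkeeping of separating the consensus error from the smoothing bias inside $b$ and tracking the factor-of-two splits so that the constants land exactly as stated. The one point requiring attention is to keep $\mathbb{E}[\|\bar g(t)\|^2\mid\mathcal{F}_{t-1}]$ unevaluated here, since it is estimated separately (via \eqref{eq:alg1:bound_Eg2} or \eqref{eq:alg1:bound_Eg2_ver2}) in the subsequent steps of the convergence proof.
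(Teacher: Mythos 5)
Your proof is correct and follows essentially the same route as the paper's: the descent lemma for the $L$-smooth $f$, unbiasedness of the two-point estimator with respect to the smoothed functions (Proposition~\ref{prop:basic_lemma_2point}), and Young's inequality to split the resulting bias into a consensus part and a smoothing part, with identical final constants. The only immaterial differences are bookkeeping: you apply a single Young's inequality with weights $\left(\tfrac12,\tfrac12\right)$ and split the bias through the intermediate point $\nabla f_i(x^i(t-1))$, invoking $L$-smoothness of $f_i$ and the bound \eqref{eq:alg1:bound_diff_smooth_grad} at $x^i(t-1)$, whereas the paper applies Young's inequality twice with weights $\left(\tfrac14,1\right)$ and splits through $\nabla f^{u_t}(\bar x(t-1))$, using the $L$-smoothness of the smoothed functions $f_i^{u_t}$.
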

\begin{proof}
Since $\bar x(t)=\bar x(t-1)-\eta_t\bar g(t)$, by the $L$-smoothness of the function $f$, we get
$$
f(\bar x(t))
\leq f(\bar x(t-1))
-\eta_t\langle\nabla f(\bar x(t-1)),
\bar g(t)\rangle
+\eta_t^2\frac{L}{2}
\|\bar g(t)\|^2.
$$
Note that by \eqref{eq:alg1:grad_smoothed_ver} of Lemma \ref{lemma:alg1:bounds_func}, we have
$$
\mathbb{E}\!\left[\bar g(t)|\mathcal{F}_{t-1}\right]
=\frac{1}{n}\sum_{i=1}^n
\nabla f^{u_t}_i(x^i(t-1)).
$$
By taking the expectation conditioned on $\mathcal{F}_{t-1}$, we get
$$
\begin{aligned}
\mathbb{E}\!\left[f(\bar x(t))|\mathcal{F}_{t-1}\right]
\leq\ &
f(\bar x(t-1))
-\eta_t\|\nabla f(\bar x(t-1))\|^2
+ \eta_t^2\frac{L}{2}\mathbb{E}\!\left[\|\bar g(t)\|^2|\mathcal{F}_{t-1}\right]
\\
&-\eta_t
\left\langle\nabla f(\bar x(t-1)),
\frac{1}{n}\sum_{i=1}^n\left(
\nabla f_i^{u_t}(x^i(t-1))-\nabla f_i^{u_t}(\bar x(t-1))
\right)
\right\rangle
\\
&-\eta_t
\left\langle\nabla f(\bar x(t-1)),
\nabla f^{u_t}(\bar x(t-1))-\nabla f(\bar x(t-1))
\right\rangle.
\end{aligned}
$$
Since each $f_i^{u_t}$ is $L$-smooth (see Part 1 of Lemma \ref{lemma:alg1:bounds_func}), we have
$$
\begin{aligned}
&-\left\langle\nabla f(\bar x(t-1)),
\frac{1}{n}\sum_{i=1}^n\left(
\nabla f_i^{u_t}(x^i(t-1))-\nabla f_i^{u_t}(\bar x(t-1))
\right)
\right\rangle \\
\leq\ &
\frac{1}{2}
\left(
\frac{1}{2}\|\nabla f(\bar x(t-1))\|^2
+
2
\left\|
\frac{1}{n}
\sum_{i=1}^n
\left(\nabla f_i^{u_t}(x^i(t-1))-\nabla f_i^{u_t}(\bar x(t-1))\right)
\right\|^2
\right) \\
\leq\ &
\frac{1}{4}\|\nabla f(\bar x(t-1))\|^2
+
\left(\frac{1}{n}\sum_{i=1}^n L
\|x^i(t-1)-\bar x(t-1)\|\right)^2 \\
\leq\ &
\frac{1}{4}\|\nabla f(\bar x(t-1))\|^2
+\frac{L^2}{n}\|x(t-1)-\mathbf{1}_n\otimes\bar x(t-1)\|^2,
\end{aligned}
$$
and by \eqref{eq:alg1:bound_diff_smooth_grad}, we have
$$
\begin{aligned}
&-\left\langle\nabla f(\bar x(t-1)),
\nabla f^{u_t}(\bar x(t-1))-\nabla f(\bar x(t-1))
\right\rangle \\
\leq\ &
\frac{1}{2}
\left(
\frac{1}{2}\|\nabla f(\bar x(t-1))\|^2
+2\left\|\nabla f^{u_t}(\bar x(t-1))-\nabla f(\bar x(t-1))\right\|^2
\right)
\\
\leq\ &
\frac{1}{4}\|\nabla f(\bar x(t-1))\|^2
+u_t^2 L^2.
\end{aligned}
$$
Therefore
$$
\begin{aligned}
\mathbb{E}\!\left[f(\bar x(t))|\mathcal{F}_{t-1}\right]
\leq\ &
f(\bar x(t-1))
-\frac{\eta_t}{2}\|\nabla f(\bar x(t-1))\|^2
+\eta_t^2\frac{L}{2}\mathbb{E}\!\left[\|\bar g(t)\|^2|\mathcal{F}_{t-1}\right]
\\
&+\frac{\eta_t L^2}{n}
\|x(t-1)-\mathbf{1}_n\otimes\bar x(t-1)\|^2
+\eta_t u_t^2L^2.
\end{aligned}
$$
\end{proof}
Lemma~\ref{lemma:alg1:basic_ineq} suggests that we further need to bound two terms, the second moment of $\bar g(t)$ and the expected consensus error $e_{\mathrm{c}}(t\!-\!1)$.

\begin{lemma}\label{lemma:alg1:gt_square}
We have
$$
\mathbb{E}\!\left[
\|\bar g(t)\|^2
|\mathcal{F}_{t-1}\right]
\leq
\frac{4G^2d}{3n^2}
+2\|\nabla f(\bar x(t-1))\|^2 +\frac{4L^2}{n}\|x(t-1)-\mathbf{1}_n\otimes\bar x(t-1)\|^2
+u_t^2L^2d^2.
$$
\end{lemma}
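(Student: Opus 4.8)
The plan is to decompose the conditional second moment of $\bar g(t)$ into the squared norm of its conditional mean plus the conditional variance, and treat the two pieces separately. By Part~1 of Lemma~\ref{lemma:alg1:bounds_func}, the conditional mean is $\bar h(t):=\mathbb{E}[\bar g(t)\mid\mathcal{F}_{t-1}]=\frac{1}{n}\sum_{i=1}^n\nabla f_i^{u_t}(x^i(t-1))$, and since the conditional mean and the fluctuation $\bar g(t)-\bar h(t)$ are orthogonal, $\mathbb{E}[\|\bar g(t)\|^2\mid\mathcal{F}_{t-1}]=\|\bar h(t)\|^2+\mathbb{E}[\|\bar g(t)-\bar h(t)\|^2\mid\mathcal{F}_{t-1}]$. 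The first term will produce the $\|\nabla f(\bar x(t-1))\|^2$ and consensus-error contributions, while the second will produce the $G^2d$ and $u_t$-dependent contributions.

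For the mean term, I would write $\bar h(t)=\nabla f(\bar x(t-1))+A+B$, where $A=\frac{1}{n}\sum_i\bigl(\nabla f_i^{u_t}(x^i(t-1))-\nabla f_i^{u_t}(\bar x(t-1))\bigr)$ and $B=\nabla f^{u_t}(\bar x(t-1))-\nabla f(\bar x(t-1))$; here I use that smoothing commutes with the averaging $f=\frac{1}{n}\sum_i f_i$, so that $\frac{1}{n}\sum_i\nabla f_i^{u_t}(\bar x(t-1))=\nabla f^{u_t}(\bar x(t-1))$. Since each $f_i^{u_t}$ is $L$-smooth (Part~1 of Lemma~\ref{lemma:alg1:bounds_func}), Cauchy--Schwarz gives $\|A\|^2\leq\frac{L^2}{n}\|x(t-1)-\mathbf{1}_n\otimes\bar x(t-1)\|^2$, and \eqref{eq:alg1:bound_diff_smooth_grad} gives $\|B\|\leq u_tL$. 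Applying the elementary bound $\|a+b+c\|^2\leq 2\|a\|^2+4\|b\|^2+4\|c\|^2$ then yields $\|\bar h(t)\|^2\leq 2\|\nabla f(\bar x(t-1))\|^2+\frac{4L^2}{n}\|x(t-1)-\mathbf{1}_n\otimes\bar x(t-1)\|^2+4u_t^2L^2$.

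For the variance term, the key observation is that the sampling rule in Algorithm~\ref{alg:2point} makes $\{z^i(t)\}_{i\in\mathcal{N}}$ mutually independent given $\mathcal{F}_{t-1}$, so the fluctuations $g^i(t)-\nabla f_i^{u_t}(x^i(t-1))$ are conditionally independent and zero-mean; the cross terms vanish and $\mathbb{E}[\|\bar g(t)-\bar h(t)\|^2\mid\mathcal{F}_{t-1}]=\frac{1}{n^2}\sum_i\operatorname{Var}(g^i(t)\mid\mathcal{F}_{t-1})\leq\frac{1}{n^2}\sum_i\mathbb{E}[\|g^i(t)\|^2\mid\mathcal{F}_{t-1}]$. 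I would then bound each second moment by \eqref{eq:alg1:bound_Eg2_ver2} and use the $G$-Lipschitz hypothesis $\|\nabla f_i\|\leq G$ to obtain $\mathbb{E}[\|g^i(t)\|^2\mid\mathcal{F}_{t-1}]\leq\frac{4d}{3}G^2+u_t^2L^2d^2$, which collects into the $\frac{4G^2d}{3}$ and $u_t^2L^2d^2$ terms. Summing the mean and variance bounds then gives the claimed inequality.

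The main obstacle will be the bookkeeping of the powers of $n$ and the constants while routing each source of error to the right place: the averaging must be exploited (via the cross-term cancellation from conditional independence) so the estimator variance appears with its reduced prefactor, and the gradient signal must be extracted as $\|\nabla f(\bar x(t-1))\|^2$ rather than as the per-agent $\|\nabla f_i(x^i(t-1))\|^2$, which forces the detour through $\nabla f_i^{u_t}(\bar x(t-1))$ and the consensus error. A secondary subtlety is consolidating the several $u_t$-dependent pieces --- the smoothing bias $\|B\|\leq u_tL$ and the finite-difference error implicit in \eqref{eq:alg1:bound_Eg2_ver2} --- into a single $u_t^2$ term with the stated coefficient.
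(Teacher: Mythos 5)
Your proposal is correct in its essentials but takes a genuinely different route from the paper's. The paper never forms the conditional bias--variance decomposition: it splits each estimator pointwise into the directional-derivative ``signal'' $d\langle\nabla f_i(x^i(t-1)),z^i(t)\rangle z^i(t)$ plus a finite-difference error of magnitude at most $\frac{1}{2}u_tL$ (via \eqref{eq:alg1:finite_difference}), applies Young's inequality with weights $(4/3,4)$, and then computes the expectation of the signal part \emph{exactly}, using conditional independence across agents and the sphere moments \eqref{eq:alg1:basic_uniform_sphere}, obtaining $\frac{d-1}{n^2}\sum_i\|\nabla f_i(x^i(t-1))\|^2+\big\|\frac{1}{n}\sum_i\nabla f_i(x^i(t-1))\big\|^2$; the squared average is then bounded by $\frac{3}{2}\|\nabla f(\bar x(t-1))\|^2+\frac{3L^2}{n}\|x(t-1)-\mathbf{1}_n\otimes\bar x(t-1)\|^2$. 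Your mean/fluctuation split, with the mean handled exactly as in the proof of Lemma~\ref{lemma:alg1:basic_ineq} and the fluctuation handled by variance-of-an-average plus the packaged second-moment bound \eqref{eq:alg1:bound_Eg2_ver2}, is a valid alternative. What the paper's route buys is tighter bookkeeping of the $u_t$-dependence: the finite-difference error is paid only once, inside the Young step, giving exactly $u_t^2L^2d^2$, whereas you pay the smoothing bias twice (as $4u_t^2L^2$ in the mean term and again inside \eqref{eq:alg1:bound_Eg2_ver2}), ending with coefficient $\big(\frac{d^2}{n}+4\big)u_t^2L^2$, which exceeds the stated $u_t^2L^2d^2$ when $d\le 2$ --- a constants-level blemish, not a structural one.

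The more substantive point concerns the powers of $n$ that you flagged as the main bookkeeping obstacle: your variance term comes out as $\frac{1}{n^2}\sum_{i=1}^n\big(\frac{4d}{3}G^2+u_t^2L^2d^2\big)=\frac{4G^2d}{3n}+\frac{u_t^2L^2d^2}{n}$, i.e.\ with a $1/n$ prefactor, not the $1/n^2$ claimed in the lemma, and you cannot do better: averaging $n$ conditionally independent estimators reduces variance by $1/n$, and $\sum_i\|\nabla f_i(x^i(t-1))\|^2$ is only bounded by $nG^2$ under the $G$-Lipschitz hypothesis. Note, however, that the paper's own proof has exactly the same defect: its penultimate line contains $\frac{4(d-1)}{3n^2}\sum_i\|\nabla f_i(x^i(t-1))\|^2$, which is only bounded by $\frac{4(d-1)G^2}{3n}$, yet the final line silently replaces it by $\frac{4G^2d}{3n^2}$ --- a step valid only for $n=1$. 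So your argument establishes what the paper's argument actually establishes; the $n^{-2}$ in the statement of Lemma~\ref{lemma:alg1:gt_square} (and in its downstream uses, e.g.\ \eqref{eq:alg1:main_bound} and the $\frac{\alpha_\eta G^2}{3n^2}\ln(2t+1)$ term of \eqref{eq:alg1_bound_grad_2}) appears to be an error in the paper rather than a gap in your proof.
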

\begin{proof}
Since
$$
\begin{aligned}
\|\bar g(t)\|^2
=\ &
\Bigg\|\frac{d}{n}\sum_{i=1}^n
\Bigg[
\langle \nabla f_i(x^i(t-1)),z^i(t)\rangle z^i(t) \\
& \qquad
+
\left(\frac{f_i(x^i(t\!-\!1)
\!+\!
u_tz^i(t))
\!-\!
f_i(x^i(t\!-\!1)
\!-\!
u_tz^i(t))}{2u_t}
-\langle\nabla f_i(x^i(t\!-\!1)),z^i(t)\rangle
\!\right)\!z^i(t)
\Bigg]\Bigg\|^2,
\end{aligned}
$$
by \eqref{eq:alg1:finite_difference} of Lemma \ref{lemma:alg1:bounds_func}, we see that
$$
\begin{aligned}
&\mathbb{E}\!\left[\|\bar g(t)\|^2|\mathcal{F}_{t-1}\right] \\
\leq \ &
\mathbb{E}\!\left[
\left.\left(1+\frac{1}{3}\right)
\left(\frac{d}{n}\sum_{i=1}^n
\langle\nabla f_i(x^i(t-1)),z^i(t)\rangle z^i(t)
\right)^2
+(1+3)\left(\frac{d}{n}\sum_{i=1}^n \frac{1}{2}u_t L\right)^2
\right|\mathcal{F}_{t-1}
\right] \\
=\ &
\frac{4}{3}\!
\left(\!
\frac{d}{n^2}\sum_{i=1}^n
\| \nabla f_i(x^i(t-1))\|^2
+
\frac{1}{n^2}\sum_{i\neq j}
\langle \nabla f_i(x^i(t-1)), \nabla f_j(x^j(t-1))\rangle
\!\right)\!
+u_t^2 L^2 d^2,
\end{aligned}
$$
where we used \eqref{eq:alg1:basic_uniform_sphere} and the fact that $\langle\nabla f_i(x^i(t-1)),z^i(t)\rangle z^i(t)$ and $\langle\nabla f_j(x^j(t-1)),z^j(t)\rangle z^j(t)$ are independent for $j\neq i$ conditioned on $\mathcal{F}_{t-1}$. Then since
$$
\begin{aligned}
&\frac{d}{n^2}\sum_{i=1}^n
\|\nabla f_i(x^i(t-1))\|^2
+\frac{1}{n^2}\sum_{i\neq j}
\langle \nabla f_i(x^i(t-1)), \nabla f_j(x^j(t-1))\rangle \\
=\ &
\frac{d-1}{n^2}\sum_{i=1}^n
\|\nabla f_i(x^i(t-1))\|^2
+\left\|\frac{1}{n}\sum_{i=1}^n
\nabla f_i(x^i(t-1))\right\|^2,
\end{aligned}
$$
and
$$
\begin{aligned}
&\left\|\frac{1}{n}\sum_{i=1}^n\nabla f_i(x^i(t-1))\right\|^2 \\
\leq\ &
\left(1+\frac{1}{2}\right)\left\|\frac{1}{n}\sum_{i=1}^n\nabla f_i(\bar x(t-1))\right\|^2
+(1+2)\left\|\frac{1}{n}\sum_{i=1}^n\left(\nabla f_i(x^i(t-1))-\nabla f_i(\bar x(t-1))\right)\right\|^2 \\
\leq\ &
\frac{3}{2}\|\nabla f(\bar x(t-1))\|^2
+3\cdot \frac{1}{n}
\sum_{i=1}^n
\|\nabla f_i(x^i(t-1))-\nabla f_i(\bar x(t-1))\|^2 \\
\leq\ &
\frac{3}{2}\|\nabla f(\bar x(t-1))\|^2
+\frac{3 L^2}{n}
\|x(t-1)-\mathbf{1}_n\otimes\bar x(t-1)\|^2,
\end{aligned}
$$
we get
$$
\begin{aligned}
\mathbb{E}\!\left[\|\bar g(t)\|^2|\mathcal{F}_{t-1}\right]\leq\ &
\frac{4(d-1)}{3n^2}\sum_{i=1}^n \|\nabla f_i(x^i(t-1))\|^2
+\frac{4}{3}\left\|\frac{1}{n}\sum_{i=1}^n\nabla f_i(x^i(t-1))\right\|^2
+u_t^2 L^2 d^2 \\
\leq\ &
\frac{4G^2d}{3n^2}
+
2\|\nabla f(\bar x(t-1))\|^2
+\frac{4L^2}{n}\left\|
x(t-1)-\mathbf{1}\otimes\bar x(t-1)
\right\|^2
+u_t^2 L^2 d^2.
\end{aligned}
$$
\end{proof}

\begin{lemma}\label{lemma:alg1:consensus_error}
For each $t\geq 1$, we have
\begin{equation}\label{eq:alg1:consensus_error_as_bound}
\begin{aligned}
\|x(t)-\mathbf{1}_n\otimes\bar x(t)\|^2
\leq \ &
\left(\frac{1+\rho^2}{2}\right)^t
\|x(0)-\mathbf{1}_n\otimes\bar x(0)\|^2
+
\frac{2n\rho^2}{1-\rho^2}G^2d^2\sum_{\tau=0}^{t-1}
\left(\frac{1+\rho^2}{2}\right)^{\tau}
\eta_{t-\tau}^2
\end{aligned}
\end{equation}
almost surely, and
\begin{equation}\label{eq:alg1:consensus_error_Esq_bound}
\begin{aligned}
e_{\mathrm{c}}(t)
\leq\ &
\left(\frac{1+\rho^2}{2}\right)^t
e_{\mathrm{c}}(0)
+
\frac{2n\rho^2\kappa^2}{1-\rho^2}G^2d
\sum_{\tau=0}^{t-1}
\left(\frac{1+\rho^2}{2}\right)^{\tau}
\eta_{t-\tau}^2.
\end{aligned}
\end{equation}
\end{lemma}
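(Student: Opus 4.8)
The plan is to establish a one-step geometric contraction for the consensus error $a_t:=\|x(t)-\mathbf{1}_n\otimes\bar x(t)\|$ and then unroll it; the two bounds \eqref{eq:alg1:consensus_error_as_bound} and \eqref{eq:alg1:consensus_error_Esq_bound} will follow from the same recursion, differing only in how the gradient-estimator deviation is bounded. First I would rewrite $a_t$ in a contractible form. Since $W$ is doubly stochastic, averaging preserves the mean, so $\bar x(t)=\bar x(t-1)-\eta_t\bar g(t)$ and, using $W\mathbf{1}_n=\mathbf{1}_n$, $\mathbf{1}_n\otimes\bar x(t)=(W\otimes I_d)(\mathbf{1}_n\otimes\bar x(t))$. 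Subtracting this from $x(t)=(W\otimes I_d)(x(t-1)-\eta_t g(t))$ and factoring out $(W\otimes I_d)$ gives
$$x(t)-\mathbf{1}_n\otimes\bar x(t)=(W\otimes I_d)\big[(x(t-1)-\mathbf{1}_n\otimes\bar x(t-1))-\eta_t(g(t)-\mathbf{1}_n\otimes\bar g(t))\big].$$
Lemma~\ref{lemma:consensus_contraction} contracts the right-hand side by $\rho$, and the triangle inequality then yields $a_t\leq\rho\,a_{t-1}+\rho\eta_t b_t$, where $b_t:=\|g(t)-\mathbf{1}_n\otimes\bar g(t)\|$.

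Next I would square this relation and absorb the cross term via Young's inequality $2\rho^2\eta_t a_{t-1}b_t\leq\rho^2 c\,a_{t-1}^2+\rho^2 c^{-1}\eta_t^2 b_t^2$. Choosing $c=(1-\rho^2)/(2\rho^2)$ makes the coefficient of $a_{t-1}^2$ exactly $\rho^2(1+c)=(1+\rho^2)/2$, while the coefficient of $\eta_t^2 b_t^2$ becomes $\rho^2(1+c^{-1})=\rho^2(1+\rho^2)/(1-\rho^2)\leq 2\rho^2/(1-\rho^2)$, using $1+\rho^2\leq 2$. This produces the recursion
$$a_t^2\leq\frac{1+\rho^2}{2}\,a_{t-1}^2+\frac{2\rho^2}{1-\rho^2}\,\eta_t^2 b_t^2.$$
Applying Young's inequality pathwise like this is the key move: it keeps $b_t^2$ intact so that a single recursion serves both parts, and it avoids ever confronting the correlation between $a_{t-1}$, which is $\mathcal{F}_{t-1}$-measurable, and $b_t$, which depends on the fresh samples $z^i(t)$.

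It then remains to bound $b_t^2=\sum_i\|g^i(t)-\bar g(t)\|^2\leq\sum_i\|g^i(t)\|^2$. For the almost-sure bound \eqref{eq:alg1:consensus_error_as_bound}, $G$-Lipschitzness gives the deterministic estimate $\|g^i(t)\|\leq dG$, since $|f_i(x+u_tz)-f_i(x-u_tz)|\leq 2u_tG$ and $\|z\|=1$; hence $b_t^2\leq nd^2G^2$ surely. For the expected bound \eqref{eq:alg1:consensus_error_Esq_bound}, I would take expectations of the squared recursion and invoke the second-moment estimate \eqref{eq:alg1:bound_Eg2} of Lemma~\ref{lemma:alg1:bounds_func}, giving $\mathbb{E}[b_t^2]\leq n\kappa^2 G^2 d$; the sharpening from $d^2$ to $\kappa^2 d$ is precisely what improves the dimension dependence of \eqref{eq:alg1:consensus_error_Esq_bound} relative to \eqref{eq:alg1:consensus_error_as_bound}. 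In both cases the recursion has the form $a_t^2\leq\beta\,a_{t-1}^2+C\eta_t^2$ with $\beta=(1+\rho^2)/2$, which unrolls to $a_t^2\leq\beta^t a_0^2+C\sum_{\tau=0}^{t-1}\beta^\tau\eta_{t-\tau}^2$, matching the two claims after substituting $C=\frac{2n\rho^2}{1-\rho^2}G^2d^2$ and $C=\frac{2n\rho^2\kappa^2}{1-\rho^2}G^2 d$ respectively. I expect no serious obstacle; the one delicate point is calibrating the Young parameter $c$ so the contraction factor lands precisely on $(1+\rho^2)/2$, and applying Young's inequality pathwise before taking expectations, which is what lets a single recursion deliver both the sure and the in-expectation bounds without separately analyzing the random cross term.
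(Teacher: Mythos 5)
Your proposal is correct and follows essentially the same route as the paper's proof: the identical decomposition $x(t)-\mathbf{1}_n\otimes\bar x(t)=(W\otimes I_d)\bigl[(x(t-1)-\mathbf{1}_n\otimes\bar x(t-1))-\eta_t(g(t)-\mathbf{1}_n\otimes\bar g(t))\bigr]$, the same Young parameter $(1-\rho^2)/(2\rho^2)$ landing the contraction factor on $(1+\rho^2)/2$, the same two bounds on $\|g(t)-\mathbf{1}_n\otimes\bar g(t)\|^2$ (the pathwise $nG^2d^2$ from $G$-Lipschitzness and the conditional-expectation bound $n\kappa^2G^2d$ from \eqref{eq:alg1:bound_Eg2}), and the same unrolling by induction. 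No gaps.
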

\begin{proof}
We have
$$
x(t)-\mathbf{1}_n\otimes\bar x(t)
=(W\otimes I_d)\left(x(t-1)-
\mathbf{1}_n\otimes\bar x(t-1)
-\eta_t (g(t)
-\mathbf{1}_n\otimes\bar g(t))\right),
$$
and therefore
\begin{equation}\label{eq:alg1:iteration_consensus_error}
\begin{aligned}
\|x(t)-\mathbf{1}_n\otimes\bar x(t)\|^2
\leq\ &
\rho^2
\big(\|x(t-1)-\mathbf{1}_n\otimes\bar x(t-1)\|^2
+\eta_t^2
\|g(t)-\mathbf{1}_n\otimes\bar g(t)\|^2 \\
&
+2\eta_t\|x(t-1)-\mathbf{1}_n\otimes\bar x(t-1)\|
\|g(t)-\mathbf{1}_n\otimes\bar g(t)\|
\big) \\
\leq\ 
&\rho^2
\left(\|x(t-1)-\mathbf{1}_n\otimes\bar x(t-1)\|^2
+\eta_t^2
\|g(t)-\mathbf{1}_n\otimes\bar g(t)\|^2
\right) \\
&
\!\!\!+
\frac{1-\rho^2}{2\rho^2}\cdot \rho^2\|x(t-1)-\mathbf{1}_n\otimes\bar x(t-1)\|^2
+
\frac{2\rho^2}{1-\rho^2}
\cdot \eta_t^2\rho^2\|g(t)-\mathbf{1}_n\otimes\bar g(t)\|^2 \\
=\ &
\frac{1+\rho^2}{2}
\|x(t-1)-\mathbf{1}_n\otimes\bar x(t-1)\|^2
+\eta_t^2
\frac{\rho^2(1+\rho^2)}{1-\rho^2}
\|g(t)-\mathbf{1}_n\otimes\bar g(t)\|^2,
\end{aligned}
\end{equation}
where we used Lemma \ref{lemma:consensus_contraction} in the first inequality. Since each $f_i$ is $G$-Lipschitz, we have $\left\|g^i(t)\right\|\leq Gd$, and therefore
$$
\begin{aligned}
\|g(t)-\mathbf{1}_n\otimes\bar g(t)\|^2
=\ &
\sum_{i=1}^n\left\|g^i(t)-\frac{1}{n}\sum_{j=1}^n g^j(t)\right\|^2
=\sum_{i=1}^n \left\|g^i(t)\right\|^2
-\frac{1}{n}\left\|\sum_{j=1}^n g^j(t)\right\|^2
\\
\leq\ &
\sum_{i=1}^n \|g^i(t)\|^2
\leq nG^2d^2,
\end{aligned}
$$
and by \eqref{eq:alg1:bound_Eg2} of Lemma \ref{lemma:alg1:bounds_func}, we have
$$
\begin{aligned}
\mathbb{E}\!\left[\|g(t)-\mathbf{1}_n\otimes\bar g(t)\|^2|\mathcal{F}_{t-1}\right]
\leq\ &
\mathbb{E}
\left[\left.
\sum_{i=1}^n\left\|g^i(t)\right\|^2
\right|\mathcal{F}_{t-1}\right]
\leq n\kappa^2 G^2 d.
\end{aligned}
$$
By plugging these bounds into \eqref{eq:alg1:iteration_consensus_error} and noting that $\rho<1$, we get \eqref{eq:alg1:consensus_error_as_bound} and \eqref{eq:alg1:consensus_error_Esq_bound} by mathematical induction.
\end{proof}

\begin{corollary}\label{corollary:alg1:converge_consensus}
\begin{enumerate}
\item Let $\eta_t$ be a non-increasing sequence that converges to zero. Then
$$
\lim_{t\rightarrow\infty}
\|x(t)-\mathbf{1}_n\otimes\bar x(t)\|^2
=0.
$$
Furthermore, if $\sum_{\tau=1}^{\infty}\eta_t^3<+\infty$, then
$$
\sum_{t=1}^\infty \eta_t
\|x(t-1)-\mathbf{1}_n\otimes\bar x(t-1)\|^2
<+\infty
$$
almost surely.
\item Suppose $\eta_t=\eta_1/t^\beta$ for $\beta>1/3$. Then
\begin{equation}\label{eq:alg1:sum_sq_consensus_errors}
\sum_{t=1}^\infty \eta_t e_{\mathrm{c}}(t\!-\!1)
\leq
\frac{2\eta_1 e_{\mathrm{c}}(0)}{1-\rho^2}
+\eta_1^3\frac{12\beta n\kappa^2\rho^2}{(3\beta-1)(1-\rho^2)^2} G^2 d.
\end{equation}
\end{enumerate}
\end{corollary}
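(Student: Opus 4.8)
The plan is to derive both parts directly from Lemma~\ref{lemma:alg1:consensus_error}, using the almost-sure estimate \eqref{eq:alg1:consensus_error_as_bound} for Part~1 and the expectation estimate \eqref{eq:alg1:consensus_error_Esq_bound} for Part~2. Throughout, I would write $\gamma\coloneqq(1+\rho^2)/2$, so that $\gamma\in(0,1)$ and $\sum_{\tau=0}^\infty\gamma^\tau=1/(1-\gamma)=2/(1-\rho^2)$. In each case the right-hand side of the relevant bound splits into a transient geometric term and a convolution term $\sum_{\tau=0}^{t-1}\gamma^\tau\eta_{t-\tau}^2$ between the contraction kernel $\gamma^\tau$ and the squared step sizes, so the whole argument reduces to controlling this convolution.

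For the first claim of Part~1, I would bound the two terms of \eqref{eq:alg1:consensus_error_as_bound} separately. The transient term $\gamma^t\|x(0)-\mathbf{1}_n\otimes\bar x(0)\|^2\to 0$ since $\gamma<1$. For the convolution, fix $\varepsilon>0$; since $\eta_t$ is non-increasing with $\eta_t\to0$, choose $T$ with $\eta_s^2<\varepsilon$ for all $s\ge T$, and split the sum at $\tau=t-T$: for $\tau\le t-T$ the factor $\eta_{t-\tau}^2<\varepsilon$, contributing at most $\varepsilon/(1-\gamma)$, while for $\tau>t-T$ the factor $\gamma^\tau\le\gamma^{t-T}$ is geometrically small and $\eta_{t-\tau}^2\le\eta_1^2$, so that block vanishes as $t\to\infty$; letting $\varepsilon\to0$ gives the limit. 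For the ``furthermore'' claim, I would apply \eqref{eq:alg1:consensus_error_as_bound} with $t$ replaced by $t-1$, multiply by $\eta_t$, and sum over $t$: the transient part is summable because $\eta_t\le\eta_1$ and $\sum_t\gamma^{t-1}<\infty$, and for the convolution part I would use the monotonicity inequality $\eta_t\le\eta_{t-1-\tau}$ (valid since $t\ge t-1-\tau$) to replace $\eta_t\eta_{t-1-\tau}^2$ by $\eta_{t-1-\tau}^3$, then interchange the order of summation. Writing $s=t-1-\tau$ yields $\sum_{\tau\ge0}\gamma^\tau\sum_{s\ge1}\eta_s^3=\frac{1}{1-\gamma}\sum_{s\ge1}\eta_s^3$, finite by hypothesis.

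For Part~2 I would run the same computation quantitatively, starting from \eqref{eq:alg1:consensus_error_Esq_bound} with $\eta_t=\eta_1/t^\beta$. Multiplying by $\eta_t$ and summing, the transient term is bounded using $\eta_t\le\eta_1$ by $\eta_1 e_{\mathrm{c}}(0)\sum_{t\ge1}\gamma^{t-1}=\frac{2\eta_1 e_{\mathrm{c}}(0)}{1-\rho^2}$, which is exactly the first term of \eqref{eq:alg1:sum_sq_consensus_errors}. For the convolution term I would again invoke $\eta_t\le\eta_{t-1-\tau}$ and interchange summation to obtain $\frac{2n\rho^2\kappa^2G^2d}{1-\rho^2}\cdot\frac{1}{1-\gamma}\sum_{s\ge1}\eta_s^3$, and then bound $\sum_{s\ge1}\eta_s^3=\eta_1^3\sum_{s\ge1}s^{-3\beta}\le\eta_1^3\,\frac{3\beta}{3\beta-1}$ via \eqref{eq:zeta_upperbound} (legitimate since $3\beta>1$). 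Collecting the numerical factors, $\frac{2}{1-\rho^2}\cdot\frac{2}{1-\rho^2}\cdot\frac{3\beta}{3\beta-1}=\frac{12\beta}{(3\beta-1)(1-\rho^2)^2}$ up to the common factor $n\rho^2\kappa^2G^2d$, reproducing the second term of \eqref{eq:alg1:sum_sq_consensus_errors}.

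The main obstacle is the convolution-convergence argument for the first claim of Part~1: unlike the summability claims it cannot be reduced to a clean interchange of summation, and one must balance the geometric kernel $\gamma^\tau$ against the slowly decaying $\eta_{t-\tau}^2$ through the split described above. Everywhere else the work is bookkeeping; the key recurring device is the monotonicity bound $\eta_t\le\eta_{t-1-\tau}$, which turns a mixed product $\eta_t\eta_{t-1-\tau}^2$ into a pure cube $\eta_{t-1-\tau}^3$ and thereby decouples the double sum into $\bigl(\sum_\tau\gamma^\tau\bigr)\bigl(\sum_s\eta_s^3\bigr)$. Care is only needed to track the constants in Part~2 so that they match \eqref{eq:alg1:sum_sq_consensus_errors} exactly.
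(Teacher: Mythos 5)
Your proposal is correct and follows essentially the same route as the paper: both parts are deduced from Lemma~\ref{lemma:alg1:consensus_error}, the two summability claims use the same monotonicity device $\eta_t\le\eta_{t-1-\tau}$ to turn $\eta_t\eta_{t-1-\tau}^2$ into $\eta_{t-1-\tau}^3$ and then interchange the order of summation, and the bound \eqref{eq:zeta_upperbound} with $\epsilon=3\beta$ gives the factor $\frac{3\beta}{3\beta-1}$, reproducing \eqref{eq:alg1:sum_sq_consensus_errors} with the exact constants. The only point of divergence is the limit claim in Part~1, where (writing $\gamma=(1+\rho^2)/2$) the paper reindexes the convolution and invokes a Chebyshev-sum/Ces\`aro-mean bound of the form $\sum_{\tau=1}^{t}\gamma^{t-\tau}\eta_\tau^2\le\bigl(\sum_{\tau=1}^{t}\gamma^{t-\tau}\bigr)\cdot\frac{1}{t}\sum_{\tau=1}^{t}\eta_\tau^2\to 0$, whereas your $\varepsilon$-split of the geometric kernel against the vanishing step sizes proves the same fact by an equally standard, arguably cleaner, argument.
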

\begin{proof}
\begin{enumerate}
\item 
By the monotonicity of $\eta_t$ and $((1+\rho^2)/2)^t$, we have
$$
\sum_{\tau=0}^{t-1}
\left(\frac{1+\rho^2}{2}\right)^\tau\eta_{t-\tau}^2
=
\sum_{\tau=1}^{t}
\left(\frac{1+\rho^2}{2}\right)^{t-\tau}\eta_{t}^2
\leq
\sum_{\tau=1}^{t}
\left(\frac{1+\rho^2}{2}\right)^{t-\tau}
\cdot\frac{1}{t}\sum_{\tau=1}^{t}\eta^2_{t}
\longrightarrow 0
$$
as $t\rightarrow\infty$.

For the summability of $\eta_t\|x(t-1)-\mathbf{1}_n\otimes\bar x(t-1)\|^2$, we have
$$
\begin{aligned}
&\sum_{t=2}^\infty
\eta_{t}\|x(t-1)-\mathbf{1}_n\otimes\bar x(t-1)\|^2 \\
\leq\ &
\|x(0)-\mathbf{1}_n\otimes\bar x(0)\|^2\sum_{t=2}^\infty
\eta_t
\left(\frac{1+\rho^2}{2}\right)^{t-1}
+
\frac{2n\rho^2G^2d^2}{1-\rho^2}
\sum_{t=2}^\infty\sum_{\tau=0}^{t-2}\eta_t
\left(\frac{1+\rho^2}{2}\right)^{\tau}\eta_{t-1-\tau}^2 \\
\end{aligned}
$$
The first term on the right-hand side obviously converges. For the second term, we have
$$
\begin{aligned}
\sum_{t=2}^\infty\sum_{\tau=0}^{t-2}\eta_t
\left(\frac{1+\rho^2}{2}\right)^{\tau}\eta_{t-1-\tau}^2
\leq\ &
\sum_{t=2}^\infty\sum_{\tau=0}^{t-2}
\left(\frac{1+\rho^2}{2}\right)^{\tau}\eta_{t-1-\tau}^3
=
\sum_{t=2}^\infty\sum_{\tau=2}^{t}
\left(\frac{1+\rho^2}{2}\right)^{t-\tau}\eta_{\tau-1}^3 \\
=\ &
\sum_{\tau=2}^\infty\eta_{\tau-1}^3\sum_{t=\tau}^\infty
\left(\frac{1+\rho^2}{2}\right)^{t-\tau}
=\frac{2}{1-\rho^2}\sum_{\tau=2}^\infty \eta_{\tau-1}^3<+\infty.
\end{aligned}
$$
Therefore we can conclude that $\eta_t\|x(t-1)-\mathbf{1}_n\otimes\bar x(t-1)\|^2$ is summable almost surely.

\item We have
$$
\begin{aligned}
&\sum_{t=1}^\infty\eta_t
\mathbb{E}[\|x(t-1)-\mathbf{1}_n\otimes\bar x(t-1)\|^2] \\
\leq\ &
\eta_1\|x(0)-\mathbf{1}_n\otimes\bar x(0)\|^2
\sum_{t=1}^\infty
\left(\frac{1+\rho^2}{2}\right)^{t-1}
+
\eta_1^3\frac{2n\rho^2\kappa^2}{1-\rho^2}G^2d\sum_{t=2}^\infty\sum_{\tau=0}^{t-2}\frac{1}{t^\beta(t-1-\tau)^{2\beta}}
\left(\frac{1+\rho^2}{2}\right)^{\tau} \\
\leq\ &
\frac{2\eta_1\|x(0)-\mathbf{1}_n\otimes\bar x(0)\|^2}{1-\rho^2}
+
\eta_1^3\frac{2n\rho^2\kappa^2}{1-\rho^2}G^2d\sum_{t=2}^\infty\sum_{\tau=0}^{t-2}\frac{1}{(t-1-\tau)^{3\beta}}
\left(\frac{1+\rho^2}{2}\right)^{\tau}.
\end{aligned}
$$
Then since
$$
\begin{aligned}
&\sum_{t=2}^\infty\sum_{\tau=0}^{t-2}\frac{1}{(t-1-\tau)^{3\beta}}
\left(\frac{1+\rho^2}{2}\right)^{\tau}
=
\sum_{t=2}^\infty\sum_{\tau=2}^{t}\frac{1}{(\tau-1)^{3\beta}}
\left(\frac{1+\rho^2}{2}\right)^{t-\tau} \\
=\ &
\sum_{\tau=2}^\infty\frac{1}{(\tau-1)^{3\beta}}\sum_{t=\tau}^{\infty}
\left(\frac{1+\rho^2}{2}\right)^{t-\tau}
=\frac{2}{1-\rho^2}
\sum_{\tau=2}^\infty\frac{1}{(\tau-1)^{3\beta}}
\\
\leq\ &
\frac{6\beta}{(3\beta-1)(1-\rho^2)},
\end{aligned}
$$
we get the inequality \eqref{eq:alg1:sum_sq_consensus_errors}.
\end{enumerate}
\end{proof}

Now we are ready to prove Theorem 1 in the main text.

\begin{proof}[Proof of Theorem 1]
Recall that $\delta(t)$ denotes $f(\bar x(t))-f^\ast$. By plugging 
the bound of Lemma \ref{lemma:alg1:gt_square} into \eqref{eq:alg1:lemma_basic_ineq} and noticing that $\eta_t L\leq 1/4$, we get
\begin{equation}\label{eq:alg1:main_bound}
\begin{aligned}
\mathbb{E}\!\left[
\delta(t)|\mathcal{F}_{t-1}\right]
\leq\ &
\delta(t-1)
-\frac{\eta_t}{4}\|\nabla f(\bar x(t-1))\|^2
+\frac{3\eta_t L^2}{2n}
\|x(t-1)-\mathbf{1}_n\otimes\bar x(t-1)\|^2 \\
&+\frac{2\eta_t^2 LG^2d}{3n^2}
+\eta_tu_t^2L^2
\left(1+\frac{1}{2}d^2\eta_t L\right).
\end{aligned}
\end{equation}

\begin{enumerate}
\item Consider the case where $\eta_t$ is non-increasing and $\sum_{t=1}^\infty \eta_t=+\infty$, $\sum_{t=1}^\infty \eta_t^2<+\infty$, and $\sum_{t=1}^\infty \eta_t u_t^2<+\infty$. The convergence of $x^i(t)$ to $\bar x(t)$ is already shown by Corollary \ref{corollary:alg1:converge_consensus}. Moreover, the random variable
$$
\frac{3\eta_t L^2}{2n}
\|x(t-1)-\mathbf{1}_n\otimes\bar x(t-1)\|^2
+\frac{2\eta_t^2 LG^2d}{3n^2}
+\eta_tu_t^2 L^2
\left(1+\frac{1}{2}d^2\eta_t L\right)
$$
is summable almost surely by Corollary \ref{corollary:alg1:converge_consensus} and the assumptions on $\eta_t$ and $u_t$. Then Lemma \ref{lemma:martingale_converge} guarantees that $f(\bar x(t))$ converges and
$$
\sum_{t=1}^\infty \eta_t
\|\nabla f(t-1)\|^2<+\infty 
$$
almost surely, which implies that $\liminf_{t\rightarrow\infty}\|\nabla f(\bar x(t))\|=0$.

Now let $\delta>0$ be arbitrary, and consider the event
$$
A_\delta := \left\{\limsup_{t\rightarrow\infty}
\|\nabla f(\bar x(t))\|\geq\delta\right\}.
$$
On the event $A_\delta$, we can always find a (random) subsequence of $\|\nabla f(\bar x(t))\|$, which we denote by $(\|\nabla f(\bar x(t_k))\|)_{k\in\mathbb{N}}$, such that $\|\nabla f(\bar x(t_k))\|\geq\frac{2\delta}{3}$ for all $k$.
It's not hard to verify that
$$
M:=\sup_{t\geq 1}\|\bar g(t)\|<+\infty.
$$
Then for any $s\geq 1$, we have
$$
\begin{aligned}
\|\nabla f(\bar x(t_k+s))\|
\geq\ &
\|\nabla f(\bar x(t_k))\|
-\sum_{\tau=1}^s\|\nabla f(\bar x(t_k+\tau))-\nabla f(\bar x(t_k+\tau-1))\| \\
\geq\ &
\frac{2\delta}{3}
-\sum_{\tau=1}^s L\cdot\eta_{t_k+\tau}M
\end{aligned}
$$
Let $\hat s(k)$ be the smallest positive integer such that
$$
\frac{2\delta}{3}
-\sum_{\tau=1}^{\hat s(k)+1} L\cdot\eta_{t_k+\tau}M
<\frac{\delta}{3}
$$
(such $\hat s(k)$ exists as $\sum_{t=1}^\infty \eta_t=+\infty$). We then see that
$$
\sum_{\tau=1}^{\hat s(k)+1} \eta_{t_k+\tau}
>\frac{\delta}{3LM}
\qquad
\textrm{and}
\qquad 
\|\nabla f(\bar x(t_k+s))\|\geq\frac{\delta}{3}
$$
for all $s=0,\ldots,\hat s(k)$. Therefore
$$
\sum_{\tau=1}^{\hat s(k)+1}\eta_{t_k+\tau}
\|\nabla f(\bar x(t_k+\tau-1)))\|^2
\geq \sum_{\tau=1}^{\hat s(k)+1}
\eta_{t_k+\tau}
\frac{\delta^2}{9}
\geq \frac{\delta^3}{27LM}
$$
Since $t_k\rightarrow\infty$ as $k\rightarrow\infty$, we can find a subsequence of $(t_{k_p})_{p\in\mathbb{N}}$ satisfying
$t_{k_{p+1}}-t_{k_p}>\hat s(k_p)$ by induction, and then
$$
\sum_{t=1}^\infty \eta_t\|\nabla f(\bar x(t-1))\|^2
\geq
\sum_{p=0}^\infty \frac{\delta^3}{27LM}=+\infty.
$$
In other words, on $A_\delta$ the series $\sum_{t=1}^\infty\eta_t\|\nabla f(t-1)\|^2$ diverges. Since $\sum_{t=1}^\infty\eta_t\|\nabla f(t-1)\|^2<+\infty$ converges almost surely, we have $\mathbb{P}(A_\delta)=0$, and consequently
$$
\mathbb{P}\left(
\limsup_{t\rightarrow\infty}
\|\nabla f(\bar x(t))\|>0
\right)
=\mathbb{P}
\left(
\bigcup_{k=1}^\infty
A_{1/k}
\right)
=\lim_{k\rightarrow\infty} \mathbb{P}(A_{1/k})=0,
$$
and we see that $\|\nabla f(\bar x(t))\|$ converges almost surely.

\item When $\eta_t=\eta_1/\sqrt{t}$ and $u_t=u_1/t^{\gamma/2-1/4}$, by \eqref{eq:alg1:main_bound} we have
$$
\begin{aligned}
\mathbb{E}\!\left[
\left.\frac{\delta(t)}{(t+1)^\epsilon} \right|\mathcal{F}_{t-1}\right]
\leq\ &
\frac{1}{t^\epsilon}\delta(t-1)
-\frac{\eta_1}{4t^{1/2+\epsilon}}\|\nabla f(\bar x(t-1))\|^2 \\
&+\frac{3\eta_t L^2}{2nt^{\epsilon}}
\|x(t-1)-\mathbf{1}_n\otimes\bar x(t-1)\|^2
+\frac{2\eta_1^2 LG^2d}{3n^2 t^{1+\epsilon}}
+\frac{\eta_tu_t^2L^2 }{t^\epsilon}\left(1+\frac{1}{2}d^2\eta_t L\right),
\end{aligned}
$$
where $\epsilon>0$ is arbitrary. Since
$$
\frac{3\eta_t L^2}{2nt^{\epsilon}}
\|x(t-1)-\mathbf{1}_n\otimes\bar x(t-1)\|^2 +\frac{2\eta_1^2 LG^2(d-1)}{3n^2 t^{1+\epsilon}}
+\frac{\eta_tu_t^2L^2 }{t^\epsilon}\left(1+\frac{1}{2}d^2\eta_t L\right)
$$
is summable, we see that
$$
\sum_{t=1}^\infty\frac{\eta_1}{t^{1/2+\epsilon}}
\|\nabla f(\bar x(t-1))\|^2<+\infty,
$$
which implies that
$$
\liminf_{t\rightarrow\infty}\|\nabla f(\bar x(t))\|=0.
$$

Now by taking the telescoping sum of \eqref{eq:alg1:main_bound} and noting that $\delta(t)\geq 0$, we get
\begin{equation}\label{eq:alg1_proof_temp1}
\begin{aligned}
\sum_{\tau=1}^t \eta_\tau \mathbb{E}\!\left[\|\nabla f(\bar x(t-1))\|^2\right]
\leq\ &
4\delta(0)
+\frac{6L^2}{n}
\sum_{\tau=1}^t \eta_\tau
e_{\mathrm{c}}(\tau\!-\!1)
+\frac{8LG^2d}{3n^2}
\sum_{\tau=1}^t\eta_\tau^2 \\
&+4L^2
\sum_{\tau=1}^t
\! \left(\eta_\tau u_\tau^2
+\frac{1}{2}d^2L \eta_\tau^2 u_\tau^2
\right).
\end{aligned}
\end{equation}
Since $\eta_t=\eta_1/\sqrt{t}=\alpha_\eta/(4L\sqrt{d\cdot t})
\leq 1/(4L\sqrt{d})$ and $u_t\leq \alpha_u G/(L\sqrt{d}t^{\gamma/2-1/4})$ with $\alpha_\eta\leq 1$ and $\gamma>1$, we have
$$
\begin{aligned}
\sum_{\tau=1}^t \eta_\tau
=\ &
\eta_1 \sum_{\tau=1}^t\frac{1}{\sqrt{t}}
\geq 2\eta_1(\sqrt{t+1}-1), \\
\sum_{\tau=1}^t \eta_\tau^2
=\ &
\eta_1^2 \sum_{\tau=1}^t\frac{1}{t}
\leq \eta_1^2 \ln(2t+1), \\
\sum_{\tau=1}^t
\left(\eta_\tau u_\tau^2+\frac{1}{2}d^2 L\eta_\tau^2 u_\tau^2\right)
\leq\ &
\sum_{\tau=1}^t
\left(1+\frac{d^{3/2}}{8}\right)\eta_\tau u_\tau^2 \\
\leq\ &
\frac{9d^{3/2}}{8}\cdot\eta_1
\frac{\alpha_u^2 G^2}{L^2 d}\sum_{\tau=1}^t
\frac{1}{t^{\gamma}}
\leq
\eta_1\frac{9\alpha_u^2 G^2\sqrt{d}}{8L^2}\frac{\gamma}{\gamma-1},
\end{aligned}
$$
where we used \eqref{eq:lower_bound_sum_poly}, \eqref{eq:bound_harmonic_number} and \eqref{eq:zeta_upperbound}.
In addition, by the second part of Corollary \ref{corollary:alg1:converge_consensus}, we get
\begin{align*}
\sum_{\tau=1}^t \eta_\tau
\frac{e_{\mathrm{c}}(\tau-1)}{n}
\leq\ &
\frac{2\eta_1 e_{\mathrm{c}}(0)}{n(1\!-\!\rho^2)}
+\eta_1^3\frac{12 \kappa^2\rho^2}{(1-\rho^2)^2} G^2 d.
\end{align*}
By plugging these bounds into \eqref{eq:alg1_proof_temp1}, we get
$$
\begin{aligned}
\frac{\sum_{\tau=1}^t \eta_\tau \mathbb{E}\!\left[\|\nabla f(\bar x(t-1))\|^2\right]}{\sum_{\tau=1}^t\eta_\tau}
\leq\ &
\frac{1}{\sqrt{t+1}-1}
\left(
\frac{8\sqrt{d} L \delta(0)}{\alpha_\eta}
+\frac{6 L^2 e_{\mathrm{c}}(0)}{n(1-\rho^2)}
+\frac{9\alpha_u^2\gamma}{4(\gamma-1)}G^2\sqrt{d}
\right) \\
&
+\frac{\alpha_\eta G^2\sqrt{d}}{3n^2}\frac{\ln(2t+1)}{\sqrt{t+1}-1}
+\frac{9\kappa^2\rho^2}{4(1-\rho^2)^2}\frac{\alpha_\eta^2 G^2}{\sqrt{t+1}-1}.
\end{aligned}
$$
We now get the convergence rate of $\mathbb{E}\!\left[\|\nabla f(\bar x(t))\|^2\right]$ stated in the theorem. The convergence rate of the consensus error follows from Lemma \ref{lemma:alg1:consensus_error} and \eqref{eq:conv_exp_poly_asymp}.
\end{enumerate}
\end{proof}

\section{Proof of Theorem~\ref{theorem:alg1_grad_dom}}

We still denote
$$
x(t)=\begin{bmatrix}
x^1(t) \\
\vdots \\
x^n(t)
\end{bmatrix},
\quad
g(t)=\begin{bmatrix}
g^1(t) \\
\vdots \\
g^n(t)
\end{bmatrix},
\qquad
\bar x(t)=\frac{1}{n}\sum_{i=1}^n x^i(t),
\quad
\bar g(t)=\frac{1}{n}\sum_{i=1}^n g^i(t),
$$
and $\delta(t)= f(\bar x(t))-f^\ast$, $e_{\mathrm{c}}(t)
= \mathbb{E}\!\left[
\|x(t)-\mathbf{1}_n\otimes\bar{x}(t)\|^2
\right]
$,
and $(\mathcal{F}_t)_{t\in \mathbb{N}}$ will be a filtration such that $(z^i(t),x^i(t))$ is $\mathcal{F}_t$-measurable for each $t\geq 1$.
We recall that the iterations of Algorithm~\ref{alg:2point} can be equivalently written as
$$
x(t)=(W\otimes I_d)(x(t-1)
-\eta_t g(t)),
\qquad
\bar x(t)=\bar x(t-1)-\eta_t\bar g(t),
$$
and that each $f_i$ is assumed to be $L$-smooth and $f_i^\ast=\inf_{x\in\mathbb{R}^d} f_i(x)>-\infty$. In addition, $\Delta$ is defined as $\Delta\coloneqq f^\ast-\frac{1}{n}\sum_{i=1}^n f_i^\ast$.

Note  that  Lemma~\ref{lemma:alg1:basic_ineq}  still  applies  here.  On  the  other  hand,as each $f_i$ is not uniformly Lipschitz continuous over $\mathbb{R}^d$, we need new lemmas characterizing the consensus procedure and the second moment of $\bar{g}(t)$.

\begin{lemma}\label{lemma:alg1_graddom:consns_err_bound}
Suppose
\begin{equation}\label{eq:alg1_graddom:eta_cond1}
\eta_t^2 L^2
\leq \frac{1-\rho^2}{12\rho^2\left(
\frac{4d}{3}+\frac{6\rho^2}{1-\rho^2}\right)}.
\end{equation}
Then for each $t\geq 1$,
\begin{equation}\label{eq:alg1_graddom:consns_err_one_step}
\begin{aligned}
\frac{e_{\mathrm{c}}(t)}{n}
\leq\ &
\frac{1 + \rho^2}{2}
\frac{e_{\mathrm{c}}(t-1)}{n}
+
4\eta_t^2
\rho^2L
\!\left(
\frac{4d}{3}
+\frac{6\rho^2}{1 - \rho^2}
\right)\!
\mathbb{E}\!\left[\delta(t-1)\right] \\
&
+
4\eta_t^2
\rho^2L\Delta
\!\left(
\frac{4d}{3}
+\frac{6\rho^2}{1-\rho^2}
\!\right)
+
\eta_t^2\rho^2 u_t^2 L^2
\!\left(
d^2
+
\frac{6\rho^2}{1 - \rho^2}
\right)\!.
\end{aligned}
\end{equation}
Consequently,
\begin{equation}\label{eq:alg1_graddom:consns_err_bound}
\begin{aligned}
\frac{e_{\mathrm{c}}(t)}{n}
\leq\ &
\frac{16\alpha_\eta^2\rho^2 L}{\mu^2}
\left(\frac{4d}{3}
+\frac{6\rho^2}{1\!-\!\rho^2}\right)
\sum_{\tau=1}^t
\frac{\mathbb{E}[\delta(\tau\!-\!1)]}
{(\tau+t_0)^2}
\left(\frac{1+\rho^2}{2}\right)^{t-\tau} \\
& +
\frac{32\alpha_\eta^2
\rho^2 L\Delta}{\mu^2(1-\rho^2)}\left(\frac{4d}{3}
+\frac{6\rho^2}{1\!-\!\rho^2}\right)
\frac{1}{t^2}
+ o(t^{-2}).
\end{aligned}
\end{equation}
\end{lemma}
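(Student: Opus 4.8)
The plan is to first establish the one-step recursion \eqref{eq:alg1_graddom:consns_err_one_step} — which is the heart of the lemma — and then obtain the accumulated bound \eqref{eq:alg1_graddom:consns_err_bound} by unrolling the recursion and invoking \eqref{eq:conv_exp_poly_asymp}. Throughout I write $x^\perp(t)\coloneqq x(t)-\mathbf{1}_n\otimes\bar x(t)$ and $g^\perp(t)\coloneqq g(t)-\mathbf{1}_n\otimes\bar g(t)$, and let $\Pi\coloneqq I_n-\frac{1}{n}\mathbf{1}_n\mathbf{1}_n^\top$. Unlike in the proof of Theorem~\ref{theorem:alg1_non_grad_dom}, each $f_i$ is now only $L$-smooth rather than $G$-Lipschitz, so the second moment of the gradient estimator can no longer be bounded by a constant; instead it must be controlled through \eqref{eq:alg1:bound_Eg2_ver2} of Lemma~\ref{lemma:alg1:bounds_func} and then tied back to the objective gap $\delta(t-1)$ via Lemma~\ref{lemma:gradient_upper_bound}.

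For the one-step recursion I would start from the non-expansiveness of $(W\otimes I_d)$ on the consensus-orthogonal subspace (as in the derivation of Lemma~\ref{lemma:alg1:consensus_error}), which gives $\|x^\perp(t)\|^2\le \rho^2\|x^\perp(t-1)-\eta_t g^\perp(t)\|^2$. Taking the expectation conditioned on $\mathcal{F}_{t-1}$ and expanding the square, the cross term involves only the conditional mean $\mathbb{E}[g^\perp(t)\mid\mathcal{F}_{t-1}]=(\Pi\otimes I_d)\nabla^{u_t}(t-1)$, where $\nabla^{u_t}(t-1)$ stacks the smoothed gradients $\nabla f_i^{u_t}(x^i(t-1))$ by \eqref{eq:alg1:grad_smoothed_ver}, while the quadratic term retains the full second moment. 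The key device is to split the cross term by Young's inequality with the carefully chosen weight $\gamma=(1-\rho^2)/(3\rho^2)$: this produces a raw contraction factor $\rho^2(1+\gamma)=(1+2\rho^2)/3$ on $\|x^\perp(t-1)\|^2$ and a weight $\rho^2\eta_t^2/\gamma=3\rho^4\eta_t^2/(1-\rho^2)$ on the squared mean $\|(\Pi\otimes I_d)\nabla^{u_t}(t-1)\|^2$. Bounding the latter by $2\sum_i\|\nabla f_i(x^i(t-1))\|^2+2nu_t^2L^2$ (using \eqref{eq:alg1:bound_diff_smooth_grad}) and the full second moment $\rho^2\eta_t^2\,\mathbb{E}[\|g^\perp(t)\|^2\mid\mathcal{F}_{t-1}]$ by $\rho^2\eta_t^2(\frac{4d}{3}\sum_i\|\nabla f_i(x^i(t-1))\|^2+nu_t^2L^2d^2)$ (using \eqref{eq:alg1:bound_Eg2_ver2}), the combined coefficient of $\sum_i\|\nabla f_i(x^i(t-1))\|^2$ collapses to exactly $\rho^2\eta_t^2(\frac{4d}{3}+\frac{6\rho^2}{1-\rho^2})$ and that of $nu_t^2L^2$ to $\rho^2\eta_t^2(d^2+\frac{6\rho^2}{1-\rho^2})$, the latter already matching the $u_t^2$ term of \eqref{eq:alg1_graddom:consns_err_one_step}.

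The main obstacle — and the reason the step-size condition \eqref{eq:alg1_graddom:eta_cond1} is needed — is that converting $\sum_i\|\nabla f_i(x^i(t-1))\|^2$ into the objective gap reintroduces the consensus error, making the recursion self-referential. Concretely, I would write $\|\nabla f_i(x^i(t-1))\|^2\le 2\|\nabla f_i(\bar x(t-1))\|^2+2L^2\|x^i(t-1)-\bar x(t-1)\|^2$ and then apply Lemma~\ref{lemma:gradient_upper_bound} together with the identity $\frac1n\sum_i(f_i(\bar x(t-1))-f_i^\ast)=\delta(t-1)+\Delta$ to obtain $\sum_i\|\nabla f_i(x^i(t-1))\|^2\le 4Ln(\delta(t-1)+\Delta)+2L^2\|x^\perp(t-1)\|^2$. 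The first part produces the $\mathbb{E}[\delta(t-1)]$ and $\Delta$ terms with the common coefficient $4\eta_t^2\rho^2L(\frac{4d}{3}+\frac{6\rho^2}{1-\rho^2})$ as claimed, while the second, self-referential part contributes an extra $2\rho^2\eta_t^2L^2(\frac{4d}{3}+\frac{6\rho^2}{1-\rho^2})$ to the coefficient of $\|x^\perp(t-1)\|^2/n$. Under \eqref{eq:alg1_graddom:eta_cond1} this extra term is at most $(1-\rho^2)/6$, so the total consensus coefficient is at most $\frac{1+2\rho^2}{3}+\frac{1-\rho^2}{6}=\frac{1+\rho^2}{2}$, which is precisely the contraction factor in \eqref{eq:alg1_graddom:consns_err_one_step}. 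Verifying that the choice $\gamma=(1-\rho^2)/(3\rho^2)$ and the constant $12$ in \eqref{eq:alg1_graddom:eta_cond1} conspire to produce exactly $\frac{1+\rho^2}{2}$ is the delicate bookkeeping at the core of the argument.

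Finally, for \eqref{eq:alg1_graddom:consns_err_bound} I would iterate \eqref{eq:alg1_graddom:consns_err_one_step} from $\tau=1$ to $t$, attaching the geometric weight $(\frac{1+\rho^2}{2})^{t-\tau}$ to each source term. Substituting $\eta_\tau=\frac{2\alpha_\eta}{\mu(\tau+t_0)}$, so that $\eta_\tau^2=\frac{4\alpha_\eta^2}{\mu^2(\tau+t_0)^2}$, turns the $\delta$ term into $\frac{16\alpha_\eta^2\rho^2L}{\mu^2}(\frac{4d}{3}+\frac{6\rho^2}{1-\rho^2})\sum_{\tau=1}^t\frac{\mathbb{E}[\delta(\tau-1)]}{(\tau+t_0)^2}(\frac{1+\rho^2}{2})^{t-\tau}$, which is exactly the first term on the right of \eqref{eq:alg1_graddom:consns_err_bound}. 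The $\Delta$ term, being a constant-in-$\tau$ multiple of $\eta_\tau^2$, is summed via \eqref{eq:conv_exp_poly_asymp} with $\lambda=\frac{1+\rho^2}{2}$ and $\epsilon=2$; since $1-\lambda=\frac{1-\rho^2}{2}$, this yields the explicit $\frac{32\alpha_\eta^2\rho^2L\Delta}{\mu^2(1-\rho^2)}(\frac{4d}{3}+\frac{6\rho^2}{1-\rho^2})t^{-2}$ term. The $u_t^2$ term scales like $\eta_\tau^2u_\tau^2=O(\tau^{-3})$ and the initial-condition term decays geometrically, so both fall into $o(t^{-2})$ by \eqref{eq:conv_exp_poly_asymp}, completing the bound.
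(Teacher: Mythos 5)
Your proposal is correct and follows essentially the same route as the paper's proof: the same non-expansiveness step, the same Young's-inequality split with weight $(1-\rho^2)/(3\rho^2)$ yielding the raw factor $\tfrac{1+2\rho^2}{3}$, the same use of \eqref{eq:alg1:bound_Eg2_ver2}, \eqref{eq:alg1:bound_diff_smooth_grad} and Lemma~\ref{lemma:gradient_upper_bound} to reduce $\sum_i\|\nabla f_i(x^i(t-1))\|^2$ to $4Ln(\delta(t-1)+\Delta)+2L^2\|x(t-1)-\mathbf{1}_n\otimes\bar x(t-1)\|^2$, the same verification that \eqref{eq:alg1_graddom:eta_cond1} pushes the contraction factor to exactly $\tfrac{1+\rho^2}{2}$, and the same unrolling via \eqref{eq:conv_exp_poly_asymp}. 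No gaps; all constants check out.
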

\begin{proof}
From
$
x(t)-\mathbf{1}_n\otimes\bar{x}(t)
=(W\otimes I_d)(x(t \!-\! 1)-\mathbf{1}_n\otimes\bar{x}(t \!-\! 1)
-\eta_t(g(t)-\mathbf{1}_n\otimes\bar g(t)))
$,
we get
$$
\begin{aligned}
\left\|x(t)-\mathbf{1}_n\otimes\bar{x}(t)\right\|^2
\leq\ &
\rho^2
\left\|x(t \!-\! 1)-\mathbf{1}_n\otimes\bar{x}(t\!-\! 1)
-\eta_t(g(t)-\mathbf{1}_n\otimes\bar g(t))\right\|^2 \\
\leq\ &
\rho^2\left\|x(t\!-\!1)-\mathbf{1}_n\!\otimes\!\bar{x}(t\!-\!1)\right\|^2
+\rho^2\eta_t^2 \left\|g(t)-\mathbf{1}_n\otimes\bar g(t)\right\|^2 \\
&
-2\rho^2\eta_t\left\langle x(t\!-\!1)-\mathbf{1}_n\otimes\bar{x}(t\!-\!1),
g(t)-\mathbf{1}_n\otimes\bar g(t)\right\rangle.
\end{aligned}
$$
By $\|g(t)-\mathbf{1}_n\otimes\bar{g}(t)\|\leq \|g(t)\|$ and the bound \eqref{eq:alg1:bound_Eg2_ver2} of Lemma~\ref{lemma:alg1:bounds_func}, we get
$$
\begin{aligned}
\mathbb{E}\!\left[\left\|g(t)-\mathbf{1}_n\otimes \bar g(t)\right\|^2|\mathcal{F}_{t-1}\right]
\leq \ &
\sum_{i=1}^n\mathbb{E}\!\left[
\left.\left\|g^i(t)\right\|^2\right|\mathcal{F}_{t-1}\right]
\leq
\frac{4d}{3}\sum_{i=1}^n
\|\nabla f_i(x^i(t\!-\!1))\|^2
+nu_t^2L^2d^2.
\end{aligned}
$$
On the other hand, we have
$$
\begin{aligned}
& -
2\eta_t\,\mathbb{E}\!\left[\left\langle x(t\!-\!1)-\mathbf{1}_n\otimes\bar{x}(t\!-\!1),
g(t)-\mathbf{1}_n\otimes\bar g(t)\right\rangle
|\mathcal{F}_{t-1}\right] \\
\leq\ &
\frac{1\!-\!\rho^2}{3\rho^2}
\left\|x(t\!-\!1)-\mathbf{1}_n\otimes\bar{x}(t\!-\!1)\right\|^2
+
\frac{3\rho^2}{1\!-\!\rho^2}\eta_t^2
\sum_{i=1}^n
\left\|
\nabla f_i^{u_t}(x^i(t\!-\!1))
-
\frac{1}{n}\sum_{j=1}^n
\nabla f_j^{u_t}(x^j(t\!-\!1))
\right\|^2 \\
\leq\ &
\frac{1\!-\!\rho^2}{3\rho^2}
\left\|x(t\!-\!1)-\mathbf{1}_n\otimes\bar{x}(t\!-\!1)\right\|^2
+
\frac{3\rho^2}{1\!-\!\rho^2}\eta_t^2
\sum_{i=1}^n\|\nabla f_i^{u_t}(x^i(t\!-\!1))\|^2 \\
\leq\ &
\frac{1 \!-\! \rho^2}{3\rho^2}
\left\|x(t \!-\! 1)-\mathbf{1}_n\otimes\bar{x}(t \!-\! 1)\right\|^2
+
\frac{6\rho^2}{1\!-\!\rho^2}\eta_t^2
\sum_{i=1}^n\|\nabla f_i(x^i(t\!-\!1))\|^2
+ \frac{6\rho^2}{1\!-\!\rho^2}\eta_t^2 n u_t^2L^2.
\end{aligned}
$$
Then, we notice that
$$
\begin{aligned}
\sum_{i=1}^n \|\nabla f_i(x^i(t \!-\! 1))\|^2
\leq\ &
2\sum_{i=1}^n
\left(\|\nabla f_i(x^i(t \!-\! 1))
-\nabla f_i(\bar{x}(t \!-\! 1))\|^2
+
\|\nabla f_i(\bar{x}(t \!-\! 1))\|^2\right) \\
\leq\ &
2L^2
\|x(t \!-\! 1)
-\mathbf{1}_n\otimes\bar{x}(t \!-\! 1)\|^2
+
4L\sum_{i=1}^n
(f_i(\bar x(t\!-\!1))-f_i^\ast) \\
\leq\ &
2L^2
\|x(t \!-\! 1)
-\mathbf{1}_n\otimes\bar{x}(t \!-\! 1)\|^2
+
4Ln(f(\bar x(t\!-\!1))
-f^\ast)
+4Ln\Delta,
\end{aligned}
$$
where we used the $L$-smoothness of each $f_i$ and Lemma~\ref{lemma:gradient_upper_bound}. Therefore
$$
\begin{aligned}
& \mathbb{E}\!\left[
\left\|x(t)-\mathbf{1}_n\otimes\bar{x}(t)\right\|^2
|\mathcal{F}_{t-1}\right] \\
\leq\ &
\frac{1+2\rho^2}{3}
\left\|x(t\!-\!1)-\mathbf{1}_n\otimes\bar{x}(t\!-\!1)\right\|^2
+
\eta_t^2\rho^2
\left(\frac{4d}{3}\sum_{i=1}^n \|\nabla f_i(x^i(t \!-\! 1))\|^2
+nu_t^2L^2d^2\right) \\
&
+\frac{6\rho^4}{1-\rho^2}\eta_t^2
\sum_{i=1}^n\|\nabla f_i(x^i(t \!-\! 1))\|^2
+\frac{6\rho^4}{1-\rho^2}\eta_t^2 nu_t^2L^2 \\
\leq\ &
\frac{1+2\rho^2}{3}
\left\|x(t \!-\! 1)-\mathbf{1}_n\otimes\bar{x}(t \!-\! 1)\right\|^2
+
\eta_t^2
\rho^2
\left(\frac{4d}{3}
+\frac{6\rho^2}{1-\rho^2}\right)
\sum_{i=1}^n \|\nabla f_i(x^i(t \!-\! 1))\|^2 \\
&
+ \eta_t^2\rho^2 nu_t^2 L^2
\left(d^2
+\frac{6\rho^2}{1-\rho^2}\right) \\
\leq\ &
\left[
\frac{1+2\rho^2}{3}
+2\eta_t^2 L^2\rho^2
\left(\frac{4d}{3}
+\frac{6\rho^2}{1-\rho^2}\right)
\right]
\left\|x(t \!-\! 1)-\mathbf{1}_n\otimes\bar{x}(t \!-\! 1)\right\|^2 \\
&
+
\eta_t^2
\rho^2
\left(\frac{4d}{3}
+\frac{6\rho^2}{1-\rho^2}\right)
\cdot 4Ln(f(\bar x(t\!-\!1))-f^\ast)
+
\eta_t^2
\rho^2
\left(\frac{4d}{3}
+\frac{6\rho^2}{1-\rho^2}\right)
\cdot 4Ln\Delta \\
& +
\eta_t^2\rho^2 nu_t^2 L^2
\left(d^2
+\frac{6\rho^2}{1-\rho^2}\right).
\end{aligned}
$$
Since the condition \eqref{eq:alg1_graddom:eta_cond1} implies
$$
\frac{1+2\rho^2}{3}
+2\eta_t^2 L^2\rho^2
\left(\frac{4d}{3}
+\frac{6\rho^2}{1-\rho^2}\right)
\leq\frac{1+\rho^2}{2},
$$
by taking the total expectation of the bound on $\mathbb{E}\!\left[
\left\|x(t)-\mathbf{1}_n\otimes\bar{x}(t)\right\|^2
|\mathcal{F}_{t-1}\right]$, we get \eqref{eq:alg1_graddom:consns_err_one_step}.
Finally, by induction, we get
$$
\begin{aligned}
\frac{e_{\mathrm{c}}(t)}{n}
\leq\ &
\left(\frac{1+\rho^2}{2}\right)^t
\frac{e_{\mathrm{c}}(0)}{n}
+
4\rho^2 L
\left(\frac{4d}{3}
+\frac{6\rho^2}{1\!-\!\rho^2}\right)
\sum_{\tau=1}^t\eta_\tau^2\,
\mathbb{E}[\delta(\tau\!-\!1)]
\left(\frac{1+\rho^2}{2}\right)^{t-\tau} \\
&+
4\rho^2 L\left(\frac{4d}{3}
+\frac{6\rho^2}{1\!-\!\rho^2}\right)\Delta
\sum_{\tau=1}^t\eta_\tau^2
\!\left(\!\frac{1\!+\!\rho^2}{2}\!\right)^{t-\tau}
+
\rho^2L^2\left(d^2
+\frac{6\rho^2}{1\!-\!\rho^2}\right)
\sum_{\tau=1}^t\eta_\tau^2u_\tau^2
\left(\!\frac{1\!+\!\rho^2}{2}\!\right)^{t-\tau} \\
=\ &
\frac{16\alpha_\eta^2\rho^2 L}{\mu^2}
\!\left(\frac{4d}{3}
\!+\!\frac{6\rho^2}{1\!-\!\rho^2}\right)
\!\sum_{\tau=1}^t
\frac{\mathbb{E}[\delta(\tau\!-\!1)]}
{(\tau+t_0)^2}
\!\left(\frac{1 \!+\! \rho^2}{2}\right)^{\! t-\tau}
\!\!+
\frac{32\alpha_\eta^2
\rho^2 L\Delta}{\mu^2(1 \!-\! \rho^2)}
\!\left(\frac{4d}{3}
\!+\! \frac{6\rho^2}{1\!-\!\rho^2}\right)\!
\frac{1}{t^2}
+ o(t^{-2}),
\end{aligned}
$$
which completes the proof.
\end{proof}

\begin{lemma}
We have
\begin{equation}\label{eq:alg1_graddom:gt_second_moment}
\mathbb{E}\!\left[
\|\bar g(t)\|^2
\right]
\leq
\frac{8L^2d}{n} e_{\mathrm{c}}(t-1)
+\frac{32Ld}{3}
\mathbb{E}\!\left[\delta(t-1)\right]
+\frac{32Ld\Delta}{3}
+u_t^2L^2d^2.
\end{equation}
\end{lemma}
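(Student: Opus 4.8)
The plan is to bound $\mathbb{E}[\|\bar g(t)\|^2]$ by first passing from the average $\bar g(t)=\frac1n\sum_i g^i(t)$ to the individual estimators via convexity, then applying the per-agent second-moment estimate, and finally converting the resulting sum of local gradient norms into the consensus error, the suboptimality $\delta(t-1)$, and the constant gap $\Delta$. Concretely, I would start from the convexity (Jensen/Cauchy--Schwarz) bound
$$
\|\bar g(t)\|^2\leq\frac{1}{n}\sum_{i=1}^n\|g^i(t)\|^2 ,
$$
take the conditional expectation given $\mathcal{F}_{t-1}$, and invoke the $L$-smooth second-moment bound \eqref{eq:alg1:bound_Eg2_ver2} of Lemma~\ref{lemma:alg1:bounds_func} (this is the variant that needs only smoothness, which is essential here; see the obstacle below). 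This yields
$$
\mathbb{E}\!\left[\|\bar g(t)\|^2\,\middle|\,\mathcal{F}_{t-1}\right]
\leq\frac{4d}{3n}\sum_{i=1}^n\|\nabla f_i(x^i(t-1))\|^2+u_t^2L^2d^2 .
$$

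Next I would reuse verbatim the estimate of $\sum_i\|\nabla f_i(x^i(t-1))\|^2$ already derived inside the proof of Lemma~\ref{lemma:alg1_graddom:consns_err_bound}: split each $\nabla f_i(x^i)$ through $\nabla f_i(\bar x)$ by the triangle inequality, control $\|\nabla f_i(x^i)-\nabla f_i(\bar x)\|\leq L\|x^i-\bar x\|$ using $L$-smoothness, and bound $\|\nabla f_i(\bar x)\|^2\leq 2L(f_i(\bar x)-f_i^\ast)$ by Lemma~\ref{lemma:gradient_upper_bound}. Summing over $i$ and using $\sum_i\bigl(f_i(\bar x(t-1))-f_i^\ast\bigr)=n\bigl(\delta(t-1)+\Delta\bigr)$ gives
$$
\sum_{i=1}^n\|\nabla f_i(x^i(t-1))\|^2
\leq 2L^2\|x(t-1)-\mathbf{1}_n\otimes\bar x(t-1)\|^2+4Ln\,\delta(t-1)+4Ln\Delta .
$$
Substituting this in and taking total expectation, so that $\mathbb{E}[\|x(t-1)-\mathbf{1}_n\otimes\bar x(t-1)\|^2]=e_{\mathrm{c}}(t-1)$ and $\delta(t-1)$ becomes $\mathbb{E}[\delta(t-1)]$, produces a bound of exactly the form \eqref{eq:alg1_graddom:gt_second_moment}; the numerical coefficients obtained this way are in fact no larger than those stated, so the claimed inequality follows after relaxing constants.

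I do not expect a genuinely hard step, since this lemma is an intermediate estimate. The one point requiring care is that, unlike the general-nonconvex setting, here each $f_i$ is assumed only $L$-smooth and \emph{not} uniformly $G$-Lipschitz, so the crude uniform bound $\|\nabla f_i\|\leq G$ used in Lemma~\ref{lemma:alg1:gt_square} is unavailable. The substitute is to ``pay'' for the local gradient norms with objective suboptimality through Lemma~\ref{lemma:gradient_upper_bound}, which is precisely what forces the appearance of the $\delta(t-1)$ and $\Delta$ terms in place of a dimension-scaled constant. The only genuine bookkeeping subtlety is tracking the additive offset $\Delta=f^\ast-\frac1n\sum_i f_i^\ast$ arising from the difference between the global minimum and the average of the local minima; note also that the crude $1/n$ factor from the convexity step is harmless, since the $n$ it produces cancels against the $n$ in $\sum_i(f_i(\bar x)-f_i^\ast)$, leaving the $\delta$ and $\Delta$ terms independent of $n$ as required.
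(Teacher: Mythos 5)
Your proposal is correct and follows essentially the same route as the paper's own proof: Jensen's inequality to pass to $\frac{1}{n}\sum_i\|g^i(t)\|^2$, the smoothness-only second-moment bound \eqref{eq:alg1:bound_Eg2_ver2}, the split of $\|\nabla f_i(x^i(t-1))\|^2$ through $\nabla f_i(\bar x(t-1))$ via $L$-smoothness, and Lemma~\ref{lemma:gradient_upper_bound} to convert $\|\nabla f_i(\bar x(t-1))\|^2$ into $f_i(\bar x(t-1))-f_i^\ast$, producing the $\delta(t-1)$ and $\Delta$ terms. You also correctly identify the one real subtlety (no uniform Lipschitz bound is available here, so \eqref{eq:alg1:bound_Eg2} cannot be used), and your constants $\tfrac{8L^2d}{3n}$ and $\tfrac{16Ld}{3}$ are indeed dominated by the stated ones since $e_{\mathrm{c}}(t-1)$, $\delta(t-1)$, and $\Delta$ are all nonnegative.
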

\begin{proof}
We have
$$
\begin{aligned}
\mathbb{E}\!\left[\left.\|\bar g(t)\|^2\right|\mathcal{F}_{t-1}\right]
=\ &
\mathbb{E}\!\left[
\left.
\left\|
\frac{1}{n}\sum_{i=1}^n
\mathsf{G}^{(2)}_{f_i}(x^i(t\!-\!1);u_t,z^i(t))\right\|^2
\right|\mathcal{F}_{t-1}
\right] \\
\leq\ &
\frac{1}{n}
\sum_{i=1}^n
\mathbb{E}\!\left[
\left.
\left\|
\mathsf{G}^{(2)}_{f_i}(x^i(t\!-\!1);u_t,z^i(t))\right\|^2
\right|\mathcal{F}_{t-1}
\right] \\
\leq\ &
\frac{4d}{3n}
\sum_{i=1}^n
\|\nabla f_i(x^i(t\!-\!1))\|^2
+u_t^2L^2d^2 \\
\leq\ &
\frac{8d}{3n}
\sum_{i=1}^n
\|\nabla f_i(\bar x(t\!-\!1))\|^2
+\frac{8L^2d}{3n}
\|x(t\!-\!1)-\mathbf{1}_n\otimes\bar{x}(t\!-\!1)\|^2
+u_t^2L^2d^2 \\
\leq\ &
\frac{8L^2d}{n}\|x(t\!-\!1)-\mathbf{1}_n\otimes\bar{x}(t\!-\!1)\|^2
+\frac{32Ld}{3}
(f(\bar x(t\!-\!1))-f^\ast)
+\frac{32Ld\Delta}{3}
+u_t^2L^2d^2,
\end{aligned}
$$
where the third step follows from \eqref{eq:alg1:bound_Eg2_ver2} of Lemma~\ref{lemma:alg1:bounds_func}, and the fourth step utilizes the $L$-smoothness of $f_i$. Taking the total expectation completes the proof.
\end{proof}

By plugging \eqref{eq:alg1_graddom:gt_second_moment} into \eqref{eq:alg1:lemma_basic_ineq} of Lemma~\ref{lemma:alg1:basic_ineq} and using the fact that $f$ is $\mu$-gradient dominated, we can prove the following result.

\begin{lemma}\label{lemma:alg1_graddom:opt_gap_bound}
Suppose
$$
\eta_t L
\leq \frac{3\mu}{32L d}.
$$
Then for each $t\geq 1$,
\begin{equation}\label{eq:alg1_graddom:opt_gap_bound}
\mathbb{E}[\delta(t)]
\leq
\left(
1-\frac{\eta_t\mu}{2}
\right)
\mathbb{E}\!\left[\delta(t\!-\!1)\right]
+
\frac{3\eta_tL^2}{2n}e_{\mathrm{c}}(t\!-\!1)
+\frac{16\eta_t^2L^2d\Delta}{3}
+2\eta_tu_t^2L^2d
\end{equation}
\end{lemma}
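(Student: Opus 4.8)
The plan is to merge the two preceding displays into a single recursion for the optimality gap and then use gradient domination to turn it into a contraction. First I would start from the one-step inequality \eqref{eq:alg1:lemma_basic_ineq} of Lemma~\ref{lemma:alg1:basic_ineq} and substitute, for the term $\frac{\eta_t^2 L}{2}\mathbb{E}[\|\bar g(t)\|^2\mid\mathcal{F}_{t-1}]$, the \emph{conditional} version of the second-moment bound \eqref{eq:alg1_graddom:gt_second_moment} (the one derived in its proof, with $e_{\mathrm{c}}(t-1)$ and $\mathbb{E}[\delta(t-1)]$ replaced by $\|x(t-1)-\mathbf{1}_n\otimes\bar x(t-1)\|^2$ and $f(\bar x(t-1))-f^\ast$). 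Taking total expectation then produces a bound on $\mathbb{E}[\delta(t)]$ in terms of $\mathbb{E}[\delta(t-1)]$, $\mathbb{E}[\|\nabla f(\bar x(t-1))\|^2]$, $e_{\mathrm{c}}(t-1)$, $\Delta$ and $u_t^2$, where an extra positive multiple of $\mathbb{E}[\delta(t-1)]$ has appeared through the $\frac{32Ld}{3}\mathbb{E}[\delta(t-1)]$ contribution of \eqref{eq:alg1_graddom:gt_second_moment}.

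Next I would invoke the $\mu$-gradient domination of $f$, namely $\|\nabla f(\bar x(t-1))\|^2\geq 2\mu\,\delta(t-1)$, to replace $-\tfrac{\eta_t}{2}\mathbb{E}[\|\nabla f(\bar x(t-1))\|^2]$ by $-\eta_t\mu\,\mathbb{E}[\delta(t-1)]$; this is the step that converts the smooth-case descent into linear contraction. The remainder is careful bookkeeping of four coefficients. The coefficient of $\mathbb{E}[\delta(t-1)]$ becomes $1-\eta_t\mu+\frac{16\eta_t^2L^2d}{3}$ (the last term being $\frac{\eta_t^2 L}{2}\cdot\frac{32Ld}{3}$), and demanding that this be at most $1-\frac{\eta_t\mu}{2}$ is \emph{exactly} equivalent to $\frac{16\eta_t^2L^2d}{3}\leq\frac{\eta_t\mu}{2}$, i.e. to the hypothesis $\eta_t L\leq\frac{3\mu}{32Ld}$. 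Thus the $\delta$-coefficient saturates the step-size condition.

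The other three coefficients carry slack. The consensus coefficient is $\frac{\eta_t L^2}{n}+\frac{\eta_t^2 L}{2}\cdot\frac{8L^2 d}{n}=\frac{\eta_t L^2}{n}(1+4\eta_t Ld)$, which is at most $\frac{3\eta_t L^2}{2n}$ once $\eta_t Ld\leq\frac18$; using the hypothesis together with $\mu\leq L$ (the remark after Lemma~\ref{lemma:gradient_upper_bound}) one has $\eta_t Ld\leq\frac{3\mu}{32L}\leq\frac{3}{32}<\frac18$. The $\Delta$-term $\frac{\eta_t^2 L}{2}\cdot\frac{32Ld}{3}\Delta$ equals $\frac{16\eta_t^2L^2d\Delta}{3}$ on the nose, and the $u_t^2$-terms $\eta_t u_t^2L^2+\frac{\eta_t^2 L}{2}u_t^2L^2d^2=\eta_t u_t^2L^2\big(1+\frac{\eta_t Ld^2}{2}\big)$ are dominated by $2\eta_t u_t^2L^2d$, again via $\eta_t L\leq\frac{3\mu}{32Ld}$ and $\mu\leq L$. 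Assembling these four comparisons yields \eqref{eq:alg1_graddom:opt_gap_bound}. The only real obstacle is organizational: one must confirm that the single condition $\eta_t L\leq\frac{3\mu}{32Ld}$ simultaneously controls all four $\eta_t^2$ cross-terms, the binding one being the coefficient of $\delta(t-1)$, while the consensus and $u_t$ coefficients are closed using $\mu\leq L$.
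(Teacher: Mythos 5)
Your proof is correct and follows essentially the same route as the paper's: plug the second-moment bound \eqref{eq:alg1_graddom:gt_second_moment} into Lemma~\ref{lemma:alg1:basic_ineq}, apply gradient domination to convert $-\tfrac{\eta_t}{2}\mathbb{E}[\|\nabla f(\bar x(t-1))\|^2]$ into $-\eta_t\mu\,\mathbb{E}[\delta(t-1)]$, and close each coefficient using the step-size hypothesis together with $\mu\leq L$. Your bookkeeping — the $\delta(t-1)$ coefficient saturating the condition $\eta_t L\leq\frac{3\mu}{32Ld}$ exactly, while the consensus and $u_t^2$ terms carry slack via $\eta_t Ld\leq\frac{3}{32}<\frac{1}{8}$ — is precisely the paper's use of $\eta_t\leq\frac{3\mu}{32L^2d}\leq\frac{1}{8Ld}$.
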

\begin{proof}
Plugging \eqref{eq:alg1_graddom:gt_second_moment} into \eqref{eq:alg1:lemma_basic_ineq} and taking the total expectation yield
$$
\begin{aligned}
\mathbb{E}\!\left[
\delta(t)\right]
\leq\ &
\mathbb{E}\!\left[
\delta(t-1)
\right]
-\frac{\eta_t}{2}
\mathbb{E}\!\left[
\left\|\nabla f(\bar x(t-1))\right\|^2
\right]
+
\frac{\eta_tL^2}{n}\left(1
\!+\!4\eta_t Ld\right)
e_{\mathrm{c}}(t-1) \\
&
+\frac{16\eta_t^2 L^2d}{3}
\mathbb{E}\!\left[
\delta(t-1)
\right]
+\frac{16\eta_t^2 L^2d\Delta}{3}
+\eta_tu_t^2L^2 
+\frac{\eta_t^2L}{2}\cdot u_t^2L^2d^2
\\
\leq\ &
\!\left(
\!1+\frac{16\eta_t^2L^2d}{3}
\right)
\mathbb{E}\!\left[\delta(t\!-\!1)\right]
-\frac{\eta_t}{2}\mathbb{E}\!\left[
\left\|\nabla f(\bar x(t\!-\!1))\right\|^2
\right]
+
\frac{3\eta_tL^2}{2n}e_{\mathrm{c}}(t\!-\!1)
+\frac{16\eta_t^2L^2d\Delta}{3}
+2\eta_tu_t^2L^2d.
\end{aligned}
$$
where we used $\eta_t\leq
\mfrac{3\mu}{32L^2d}
\leq\mfrac{1}{8Ld}
$. The bound \eqref{eq:alg1_graddom:opt_gap_bound} then follows by using $2\mu\delta(t\!-\!1)\leq\|\nabla f(\bar x(t\!-\!1))\|^2$ and again $\eta_t\leq
\mfrac{3\mu}{32L^2d}$.
\end{proof}

The following lemma gives a coarse estimate of the convergence rate of $\mathbb{E}[\delta(t)]$, which will be refined later.
\begin{lemma}\label{lemma:alg1_graddom:boundedness_delta}
Suppose $\eta_t$ satisfies the conditions of Theorem~\ref{theorem:alg1_grad_dom}. Then $\mathbb{E}[\delta(t)]= O(t^{-1/2})$.
\end{lemma}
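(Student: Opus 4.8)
The plan is to collapse the two scalar recursions already established---the optimality-gap bound \eqref{eq:alg1_graddom:opt_gap_bound} of Lemma~\ref{lemma:alg1_graddom:opt_gap_bound} and the one-step consensus bound \eqref{eq:alg1_graddom:consns_err_one_step} inside Lemma~\ref{lemma:alg1_graddom:consns_err_bound}---into a single Lyapunov recursion. Writing $\delta_t \coloneqq \mathbb{E}[\delta(t)]$ and $e_t \coloneqq e_{\mathrm{c}}(t)/n$, and using $\eta_t = 2\alpha_\eta/(\mu(t+t_0))$ (so that $\eta_t\mu/2 = \alpha_\eta/(t+t_0)$) together with $\eta_t^2 = O((t+t_0)^{-2})$ and $\eta_t u_t^2 = O((t+t_0)^{-2})$, these two lemmas read
\begin{align*}
\delta_t &\leq \Big(1-\tfrac{\alpha_\eta}{t+t_0}\Big)\delta_{t-1} + \tfrac{3L^2}{2}\,\eta_t\,e_{t-1} + O\big((t+t_0)^{-2}\big),\\
e_t &\leq \tfrac{1+\rho^2}{2}\,e_{t-1} + C\,\eta_t^2\,\delta_{t-1} + O\big((t+t_0)^{-2}\big),
\end{align*}
for an explicit $C = 4\rho^2 L\big(\tfrac{4d}{3}+\tfrac{6\rho^2}{1-\rho^2}\big)$. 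The essential structure is an asymmetry: the first recursion contracts at a rate $1-O(1/t)$ tending to $1$, while the second contracts \emph{geometrically} at the fixed rate $(1+\rho^2)/2<1$; moreover the cross term feeding $e_{t-1}$ into $\delta_t$ carries a factor $\eta_t=O(1/t)$, whereas the cross term feeding $\delta_{t-1}$ into $e_t$ carries only $\eta_t^2=O(1/t^2)$.

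First I would form $V_t \coloneqq \delta_t + \beta e_t$ for a fixed weight $\beta>0$ and add $\beta$ times the second inequality to the first. Grouping coefficients, the combined bound becomes
\[
V_t \leq \big[(1-\tfrac{\alpha_\eta}{t+t_0}) + \beta C\eta_t^2\big]\delta_{t-1} + \big[\tfrac{3L^2}{2}\eta_t + \beta\tfrac{1+\rho^2}{2}\big]e_{t-1} + O\big((t+t_0)^{-2}\big).
\]
The key algebraic step is to choose $\beta$ and a threshold so that, for all large $t$, the bracketed $e_{t-1}$-coefficient is at most $\beta$ times the bracketed $\delta_{t-1}$-coefficient; because $1-\rho^2>0$ is a fixed gap while $\eta_t\to 0$, the required inequality $\tfrac{3L^2}{2}\eta_t \leq \beta\big(1-\tfrac{\alpha_\eta}{t+t_0}-\tfrac{1+\rho^2}{2}\big)+\beta^2 C\eta_t^2$ holds for every fixed $\beta>0$ once $t$ is large enough. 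This collapses the pair into the single scalar inequality $V_t \leq (1-\tfrac{c}{t+t_0})V_{t-1} + O((t+t_0)^{-2})$, where the benign $O(t^{-2})$ perturbation $\beta C\eta_t^2$ only shaves the contraction slightly, so $c$ may be taken to be any constant in $(1,\alpha_\eta)$ for $t$ large (using the hypothesis $\alpha_\eta>1$).

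Finally I would solve this scalar recursion by the elementary induction underlying Lemma~\ref{lemma:martingale_converge}: with the ansatz $V_t \leq M/(t+t_0)$ and $c>1$, one checks $M/(t+t_0) \geq (1-\tfrac{c}{t+t_0})\tfrac{M}{t-1+t_0} + \tfrac{b}{(t+t_0)^2}$ whenever $M\geq b/(c-1)$ and $t$ is large, so $V_t=O(1/t)$; hence $\mathbb{E}[\delta(t)]\leq V_t = O(1/t)$, which is stronger than, and so implies, the claimed $O(t^{-1/2})$. For the coarse statement alone only $c>1/2$ is needed, so the conclusion is robust, and the sharp $O(d/t)$ constant of Theorem~\ref{theorem:alg1_grad_dom} is recovered afterwards by feeding this polynomial decay back through the consensus convolution \eqref{eq:alg1_graddom:consns_err_bound}. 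I expect the main obstacle to be exactly the bidirectional coupling: neither $\mathbb{E}[\delta(t)]$ nor $e_{\mathrm{c}}(t)$ decays on its own without control of the other, and a naive product-form iteration of either recursion in isolation yields only boundedness. The Lyapunov weight $\beta$ is what breaks this circularity, and it succeeds only because of the contraction mismatch together with the extra $\eta_t$ on the coupling term, which lets one $1-O(1/t)$ rate govern the whole system.
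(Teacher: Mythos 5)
Your proposal is correct, and it takes a genuinely different route from the paper's. Both arguments start from the same two ingredients (the optimality-gap recursion of Lemma~\ref{lemma:alg1_graddom:opt_gap_bound} and the one-step consensus recursion \eqref{eq:alg1_graddom:consns_err_one_step}), but the paper stacks $\big(e_{\mathrm{c}}(t)/n,\ \mathbb{E}[\delta(t)]\big)$ into a vector and controls the \emph{spectral norm} of the coupled $2\times 2$ iteration matrix: it shows this norm equals $1-\alpha_\eta/t+O(t^{-2})$, weakens it to $1-\alpha_\eta/(2t)$ for $t\geq T$, and unrolls the norm recursion into a product plus a perturbation sum, which yields a rate of order $t^{-\min(1,\,\alpha_\eta/2)}$ (with a logarithmic factor at $\alpha_\eta=2$); since only $\alpha_\eta>1$ is assumed, the paper states the uniform bound $O(t^{-1/2})$ --- the halved exponent is exactly the price of the factor-of-two slack taken in the matrix-norm contraction. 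You instead collapse the system with the anisotropic linear functional $V_t=\delta_t+\beta e_t$, exploiting the same structural asymmetry the paper's matrix encodes (fixed geometric contraction of the consensus error, $O(\eta_t)$ versus $O(\eta_t^2)$ cross-coupling), and because this weighted combination preserves the contraction $1-c/(t+t_0)$ for any $c\in(1,\alpha_\eta)$, your induction with ansatz $M/(t+t_0)$ gives $\mathbb{E}[\delta(t)]\leq V_t=O(1/t)$ --- strictly stronger than the lemma's claim, and consistent with the paper, which only recovers the sharp $O(d/t)$ rate later in the proof of Theorem~\ref{theorem:alg1_grad_dom} by bootstrapping the coarse lemma through the convolution bound \eqref{eq:alg1_graddom:consns_err_bound}. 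Your stronger intermediate estimate would feed through that bootstrap equally well (indeed improving the lower-order terms). One cosmetic correction: the final step is not ``the induction underlying Lemma~\ref{lemma:martingale_converge}'' --- that lemma (Robbins--Siegmund) gives convergence and summability, not polynomial rates; what you actually use is the standard Chung-type recursion argument, which you state and verify explicitly anyway, so nothing is missing mathematically.
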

\begin{proof}
It can be checked that the conditions of both Lemma~\ref{lemma:alg1_graddom:consns_err_bound} and Lemma~\ref{lemma:alg1_graddom:opt_gap_bound} are satisfied by the choice of $\eta_t$ in Theorem~\ref{theorem:alg1_grad_dom}. By \eqref{eq:alg1_graddom:consns_err_one_step} and \eqref{eq:alg1_graddom:opt_gap_bound}, we have
$$
\begin{bmatrix}
e_{\mathrm{c}}(t)/n \\
\mathbb{E}[\delta(t)]
\end{bmatrix}
\leq
\begin{bmatrix}
\mfrac{1\!+\!\rho^2}{2}
&
\rho^2
\left(\mfrac{4d}{3}+\mfrac{6\rho^2}{1\!\!-\rho^2}\right)\eta_t^2 \\
\mfrac{3L^2}{2}\eta_t &
1-\mfrac{\eta_t\mu}{2}
\end{bmatrix}
\begin{bmatrix}
e_{\mathrm{c}}(t\!-\!1)/n \\
\mathbb{E}[\delta(t\!-\!1)]
\end{bmatrix}
+
\upsilon_t,
$$
where
$$
\upsilon_t=
\begin{bmatrix}
4\eta_t^2\rho^2L\Delta
\left(\mfrac{4d}{3}+\mfrac{6\rho^2}{1\!-\!\rho^2}\right)
+\eta_t^2u_t^2\rho^2L^2
\left(d^2+\mfrac{6\rho^2}{1\!-\!\rho^2}\right)
 \\
\mfrac{16\eta_t^2 L^2\Delta d}{3}
+2\eta_tu_t^2L^2d
\end{bmatrix}.
$$
By using $\eta_t=\mfrac{2\alpha_\eta}{\mu(t+t_0)}$ and $u_t=O(1/\sqrt{t})$, it can be shown by straightforward calculation that
$$
\left\|
\begin{bmatrix}
\mfrac{1\!+\!\rho^2}{2}
&
\rho^2
\left(\mfrac{4d}{3}+\mfrac{6\rho^2}{1\!\!-\rho^2}\right)\eta_t^2 \\
\mfrac{3L^2}{2}\eta_t &
1-\mfrac{\eta_t\mu}{2}
\end{bmatrix}
\right\|
=
1-\frac{\alpha_\eta}{t}+O(t^{-2}).
$$
Therefore there exists $T\geq 1$ such that
$$
\left\|
\begin{bmatrix}
\mfrac{1\!+\!\rho^2}{2}
&
\rho^2
\left(\mfrac{4d}{3}+\mfrac{6\rho^2}{1\!\!-\rho^2}\right)\eta_t^2 \\
\mfrac{3L^2}{2}\eta_t &
1-\mfrac{\eta_t\mu}{2}
\end{bmatrix}
\right\|
\leq 1-\frac{\alpha_\eta}{2t}
$$
for all $t\geq T$. Therefore
$$
\left\|
\begin{bmatrix}
e_{\mathrm{c}}(t)/n \\
\mathbb{E}[\delta(t)]
\end{bmatrix}\right\|
\leq
\left(1-\frac{\alpha_\eta}{2t}\right)
\left\|
\begin{bmatrix}
e_{\mathrm{c}}(t\!-\!1)/n \\
\mathbb{E}[\delta(t\!-\!1)]
\end{bmatrix}\right\|
+\|\upsilon_t\|,
\qquad\forall t\geq T,
$$
and by induction, we see that
$$
\left\|
\begin{bmatrix}
e_{\mathrm{c}}(t)/n \\
\mathbb{E}[\delta(t)]
\end{bmatrix}\right\|
\leq
\left\|
\begin{bmatrix}
e_{\mathrm{c}}(T\!-\!1)/n \\
\mathbb{E}[\delta(T\!-\!1)]
\end{bmatrix}\right\|
\cdot\prod_{\tau=T}^t
\left(1-\frac{\alpha_\eta}{2\tau}\right)
+\sum_{\tau=T}^t
\|\upsilon_\tau\|\prod_{s=\tau+1}^t
\left(1-\frac{\alpha_\eta}{2s}\right).
$$
Since for any $T\leq t_1\leq t_2+1$, we have
$$
\prod_{s=t_1}^{t_2}
\left(
1-\frac{\alpha_\eta}{2s}\right)
\leq
\exp\left(-\sum_{s=t_1}^{t_2}\frac{\alpha_\eta}{2s}\right)
\leq
\exp\left(-\frac{\alpha_\eta}{2}
\left(\ln(t_2+1)-\ln(t_1)\right)\right)
=\left(\frac{t_1}{t_2+1}\right)^{\alpha_\eta/2},
$$
we can see that
\begin{equation}\label{eq:alg1_graddom:boundedness_delta_temp}
\begin{aligned}
\left\|
\begin{bmatrix}
e_{\mathrm{c}}(t)/n \\
\mathbb{E}[\delta(t)]
\end{bmatrix}\right\|
\leq
\left\|
\begin{bmatrix}
e_{\mathrm{c}}(T\!-\!1)/n \\
\mathbb{E}[\delta(T\!-\!1)]
\end{bmatrix}\right\|
\left(\frac{T}{t+1}\right)^{\alpha_\eta/2}
+\sum_{\tau=T}^t
\|\upsilon_\tau\|
\left(\frac{\tau+1}{t+1}\right)^{\alpha_\eta/2}.
\end{aligned}
\end{equation}
Finally, by noticing that $\alpha_\eta>1$, $\|\upsilon_t\|=O(1/t^2)$ and that
$$
\begin{aligned}
\sum_{\tau=T}^t
\frac{1}{\tau^2}
\left(\frac{\tau+1}{t+1}\right)^{\alpha_\eta/2}
\leq\ &
\frac{1}{(t+1)^{\alpha_\eta/2}}
\cdot\frac{(T+1)^2}{T^2}
\sum_{\tau=T}^t
(\tau+1)^{\alpha_\eta/2-2}
=
\left\{
\begin{aligned}
& O(1/t), & \ \ & \alpha_\eta>2, \\
& O(\ln t/t), &\ \ & \alpha_\eta=2, \\
& O(t^{-\alpha_\eta/2}), &\ \ & 1<\alpha_\eta<2
\end{aligned}
\right. \\
=\ &
O(t^{-1/2}),
\end{aligned}
$$
we can see that $\mathbb{E}[\delta(t)]=O(t^{-1/2})$.
\end{proof}

We are now ready to prove Theorem~2.

\begin{proof}[Proof of Theorem~2]
By Lemma~\ref{lemma:alg1_graddom:boundedness_delta}and \eqref{eq:conv_exp_poly_asymp}, we can see that
$$
\sum_{\tau=1}^t
\frac{\mathbb{E}[\delta(\tau-1)]}{(\tau+t_0)^2}
\left(\frac{1+\rho^2}{2}\right)^{t-\tau}
=O\!\left(\frac{1}{t^{2+1/2}}\right).
$$
Therefore from \eqref{eq:alg1_graddom:consns_err_bound} of Lemma~\ref{lemma:alg1_graddom:consns_err_bound}, we see that
\begin{equation}\label{eq:alg1_graddom:consns_err_bound_temp1}
\begin{aligned}
\frac{e_{\mathrm{c}}(t)}{n}
\leq\ &
\frac{32\alpha_\eta^2
\rho^2 L\Delta}{\mu^2(1-\rho^2)}\left(\frac{4d}{3}
+\frac{6\rho^2}{1\!-\!\rho^2}\right)
\frac{1}{t^2}
+ o(t^{-2}),
\end{aligned}
\end{equation}
which is just the bound \eqref{eq:alg1_consensus_grad_dom}. On the other hand, by Lemma~\ref{lemma:alg1_graddom:opt_gap_bound} and mathematical induction, we get
$$
\begin{aligned}
\mathbb{E}[\delta(t)]
\leq\ &
\delta(0)
\prod_{\tau=1}^t
\left(
1-\frac{\eta_\tau\mu}{2}\right)
+
\frac{3L^2}{2}
\sum_{\tau=1}^t
\eta_\tau
\frac{e_{\mathrm{c}}(\tau\!-\!1)}{n}
\prod_{s=\tau+1}^t
\left(
1-\frac{\eta_s\mu}{2}\right) \\
&
+\frac{16L^2\Delta d}{3}
\sum_{\tau=1}^t
\eta_\tau^2\prod_{s=\tau+1}^t
\left(
1-\frac{\eta_s\mu}{2}\right)
+
2L^2d
\sum_{\tau=1}^t
\eta_\tau u_\tau^2
\left(
1-\frac{\eta_s\mu}{2}\right).
\end{aligned}
$$
Since for any $t_1\leq t_2+1$, we have
$$
\begin{aligned}
\prod_{s=t_1}^{t_2}
\left(
1-\frac{\eta_s\mu}{2}\right)
\leq\ &
\exp\left(-\sum_{s=t_1}^{t_2}\frac{\eta_s\mu}{2}\right)
=
\exp\left(-\alpha_\eta\sum_{s=t_1}^{t_2}\frac{1}{s+t_0}\right) \\
\leq\ &
\exp\left(-\alpha_\eta
\left(\ln(t_2+t_0+1)-\ln(t_1+t_0)\right)\right)
=\left(\frac{t_1+t_0}{t_2+t_0+1}\right)^{\alpha_\eta},
\end{aligned}
$$
by plugging in the conditions on $\eta_t$ and $u_t$, we get
$$
\begin{aligned}
\mathbb{E}[\delta(t)]
\leq\ &
\delta(0)
\left(\frac{t_0+1}{t+t_0+1}\right)^{\alpha_\eta}
+\frac{3\alpha_\eta L^2}{\mu}
\sum_{\tau=1}^t\frac{ e_{\mathrm{c}}(\tau\!-\!1)}{n(\tau+t_0)}
\left(\frac{\tau+t_0+1}{t+t_0+1}\right)^{\alpha_\eta} \\
&
+
\left(\frac{64\alpha_\eta^2 L^2\Delta d}{3\mu^2}
+\frac{4\alpha_\eta\alpha_u^2 L^2d}{\mu}\right)
\sum_{\tau=1}^t
\frac{1}{(\tau+t_0)^2}
\left(\frac{\tau+t_0+1}{t+t_0+1}\right)^{\alpha_\eta}.
\end{aligned}
$$
By \eqref{eq:alg1_graddom:consns_err_bound_temp1}, we see that
$$
\begin{aligned}
& \sum_{\tau=1}^t
\frac{e_{\mathrm{c}}(\tau\!-\!1)}{n(\tau+t_0)}\left(\frac{\tau+t_0+1}{t+t_0+1}\right)^{\alpha_\eta} \\
\leq\ &
C_1\sum_{\tau=1}^t\frac{1}{\tau^2(\tau+t_0)}\left(\frac{\tau+t_0+1}{t+t_0+1}\right)^{\alpha_\eta}
=
\frac{C_1}{(t+t_0+1)^{\alpha_\eta}}
\cdot\left\{
\begin{aligned}
&O(t^{\alpha_\eta-2}), &\ \ & \alpha_\eta>2, \\
& O(\ln t), &\ \ & \alpha_\eta = 2, \\
& C_2, &\ \ & 1<\alpha_\eta<2
\end{aligned}
\right. \\
=\ &
o(t^{-1}),
\end{aligned}
$$
where $C_1$ and $C_2$ are some positive constant. On the other hand,
$$
\begin{aligned}
\sum_{\tau=1}^t
\frac{1}{(\tau+t_0)^2}
\left(\frac{\tau+t_0+1}{t+t_0+1}\right)^{\alpha_\eta}
\leq\ &
\frac{1}{(t+t_0+1)^{\alpha_\eta}}
\left(
\frac{t_0+2}{t_0+1}
\right)^2
\sum_{\tau=1}^t
(\tau+t_0+1)^{\alpha_\eta} \\
=\ &
\left(
\frac{t_0+2}{t_0+1}
\right)^2
\cdot\frac{1}{t}
+o(t^{-1})
\leq
\frac{3}{2}
\cdot\frac{1}{t}
+o(t^{-1}),
\end{aligned}
$$
where we used the fact that
$$
\left(
\frac{t_0+2}{t_0+1}
\right)^2
\leq \left(
1+\frac{3\mu^2}{64\alpha_\eta L^2 d}
\right)^2
\leq 
\left(
1+\frac{3}{64}
\right)^2
\leq\frac{3}{2}.
$$
Therefore we obtain
\begin{equation}\label{eq:alg1_graddom:delta_bound_final}
\mathbb{E}[\delta(t)]
\leq
\left(\frac{32\alpha_\eta^2 L^2\Delta d}{\mu^2}
+\frac{6\alpha_\eta\alpha_u^2 L^2d}{\mu}\right)\frac{1}{t}
+o(t^{-1}).
\end{equation}
\end{proof}

\section{Proof of Theorem~\ref{theorem:alg2_non_grad_dom}}

We first bound the error of the $2d$-point gradient estimator.
\begin{lemma}\label{lemma:alg2:finite_difference}
Let $f:\mathbb{R}^d\rightarrow\mathbb{R}$ be $L$-smooth. Then for any $x\in\mathbb{R}^d$,
$$
\left\|
\sum_{k=1}^d\frac{f(x+ue_k)-f(x-ue_k)}{2u}e_k-\nabla f(x)\right\|
\leq \frac{1}{2}uL\sqrt{d}.
$$
\end{lemma}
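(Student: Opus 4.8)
The plan is to exploit the fact that the $2d$-point estimator is built coordinate-by-coordinate, so the whole problem reduces to the scalar finite-difference bound already recorded in \eqref{eq:alg1:finite_difference} of Lemma~\ref{lemma:alg1:bounds_func}. First I would observe that the $k$-th entry of the vector
$$
\sum_{k=1}^d\frac{f(x+ue_k)-f(x-ue_k)}{2u}e_k-\nabla f(x)
$$
is precisely $\frac{f(x+ue_k)-f(x-ue_k)}{2u}-\langle\nabla f(x),e_k\rangle$, since $\langle\nabla f(x),e_k\rangle$ is the $k$-th partial derivative of $f$.

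Next I would apply the one-dimensional estimate \eqref{eq:alg1:finite_difference} with the direction $h=e_k$. Because $\|e_k\|=1$, this yields the componentwise bound
$$
\left|\frac{f(x+ue_k)-f(x-ue_k)}{2u}-\langle\nabla f(x),e_k\rangle\right|\leq\frac{1}{2}uL
$$
for every $k=1,\dots,d$, with a right-hand side that does not depend on $k$.

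Finally I would sum the squares of the $d$ entries and take a square root. Since the squared Euclidean norm of the error vector equals the sum of the squared componentwise errors, each at most $\tfrac14 u^2L^2$, I obtain a total of at most $d\cdot\tfrac14 u^2L^2$, and hence the norm is bounded by $\tfrac12 uL\sqrt{d}$, as claimed. There is no genuine obstacle here: the only thing to get right is the bookkeeping that identifies the estimator's coordinates with the partial derivatives so that \eqref{eq:alg1:finite_difference} can be invoked verbatim; once that is in place the result is an immediate application of the scalar bound together with the elementary inequality $\|v\|\le\sqrt{d}\,\max_k|v_k|$ (or equivalently the direct sum of squares).
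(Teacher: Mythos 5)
Your proposal is correct and is essentially identical to the paper's own proof: the paper also writes the error vector componentwise, applies the scalar bound \eqref{eq:alg1:finite_difference} with $h=e_k$ to each coordinate, and sums the squares to obtain $\tfrac{1}{2}uL\sqrt{d}$. No differences worth noting.
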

\begin{proof}
We have
$$
\begin{aligned}
\left\|
\sum_{k=1}^d\frac{f(x+ue_k)-f(x-ue_k)}{2u}e_k-\nabla f(x)\right\|
=\ &\left\|
\sum_{k=1}^d\left(\frac{f(x+ue_k)-f(x-ue_k)}{2u}-\langle\nabla f(x),e_k\rangle\right)e_k\right\| \\
=\ &
\left(
\sum_{k=1}^d
\left|
\frac{f(x+ue_k)-f(x-ue_k)}{2u}-\langle\nabla f(x),e_k\rangle
\right|^2
\right)^{1/2} \\
\leq\ &
\left(
\sum_{k=1}^d \left(
\frac{1}{2} uL
\right)^2
\right)^{1/2}=\frac{1}{2}uL\sqrt{d},
\end{aligned}
$$
where we used \eqref{eq:alg1:finite_difference} of Lemma \ref{lemma:alg1:bounds_func}.
\end{proof}

We shall use the notations
$$
x(t)=\begin{bmatrix}
x^1(t) \\
\vdots \\
x^n(t)
\end{bmatrix},
\quad
g(t)=\begin{bmatrix}
g^1(t) \\
\vdots \\
g^n(t)
\end{bmatrix},
\quad
s(t)=\begin{bmatrix}
s^1(t) \\
\vdots \\
s^n(t)
\end{bmatrix},
\qquad
\bar x(t)=\frac{1}{n}\sum_{i=1}^n x^i(t),
\quad
\bar g(t)=\frac{1}{n}\sum_{i=1}^n g^i(t),
$$
and $\delta(t)=f(\bar x(t))-f^\ast$, $e_{\mathrm{c}}(t)=\|x(t)-\mathbf{1}_n\otimes\bar{x}(t)\|^2$, $e_{\mathrm{g}}(t)= \|s(t)-\mathbf{1}_n \otimes \bar g(t)\|^2$. It's not hard to see that the iterations of Algorithm~\ref{alg:multipoint} can be equivalently written as
$$
\begin{aligned}
s(t) &= (W\otimes I_d)(s(t-1)+g(t)-g(t-1)), \\
x(t) &=(W\otimes I_d)(x(t-1)-\eta s(t)).
\end{aligned}
$$
We also have
$$
\frac{1}{n}\sum_{i=1}^n s^i(t)=\bar g(t),\qquad\qquad
\bar x(t) = \bar x(t-1)-\eta\bar g(t).
$$

\begin{lemma}\label{lemma:alg2:basic_ineq}
Suppose $\eta L\leq 1/6$. Then
\begin{equation}\label{eq:lemma_basic_ineq}
\delta(t)
\leq
\delta(t-1)
-\frac{\eta}{3}\|\nabla f(\bar x(t-1))\|^2
+
\frac{4\eta L^2}{3n}
e_{\mathrm{c}}(t-1)
+\frac{\eta u_t^2 L^2 d}{3}.
\end{equation}
\end{lemma}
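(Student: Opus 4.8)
The plan is to run a standard descent-lemma argument on the averaged iterate, exploiting the fact that the $2d$-point estimator is deterministic (so no expectation is needed here, consistent with the deterministic bound in the statement). Since $\bar x(t)=\bar x(t-1)-\eta\bar g(t)$, $L$-smoothness of $f$ gives
$$
\delta(t)\leq \delta(t-1)-\eta\langle \nabla f(\bar x(t-1)),\bar g(t)\rangle+\frac{\eta^2 L}{2}\|\bar g(t)\|^2.
$$
Writing $p\coloneqq\nabla f(\bar x(t-1))$ and splitting $\bar g(t)=p+(\bar g(t)-p)$, the task reduces to controlling the surrogate-gradient error $\bar g(t)-p$ and then choosing the Young's-inequality parameters so that the residual coefficients collapse to exactly $-\eta/3$, $\tfrac{4\eta L^2}{3n}$, and $\tfrac{\eta u_t^2L^2 d}{3}$.

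First I would bound $\|\bar g(t)-p\|$. Recall $\bar g(t)=\tfrac1n\sum_i \mathsf{G}^{(2d)}_{f_i}(x^i(t-1);u_t)$, and decompose $\bar g(t)-p = \big(\tfrac1n\sum_i\nabla f_i(x^i(t-1))-\nabla f(\bar x(t-1))\big)+\tfrac1n\sum_i\big(\mathsf{G}^{(2d)}_{f_i}(x^i(t-1);u_t)-\nabla f_i(x^i(t-1))\big)$. The first (consensus) piece is bounded using the $L$-smoothness of each $f_i$ together with Cauchy--Schwarz, $\big\|\tfrac1n\sum_i(\nabla f_i(x^i(t-1))-\nabla f_i(\bar x(t-1)))\big\|\leq \tfrac{L}{n}\sum_i\|x^i(t-1)-\bar x(t-1)\|\leq \tfrac{L}{\sqrt n}\sqrt{e_{\mathrm c}(t-1)}$, so its square is at most $\tfrac{L^2}{n}e_{\mathrm c}(t-1)$. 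The second (finite-difference) piece is controlled termwise by Lemma~\ref{lemma:alg2:finite_difference}, giving $\|\cdot\|\leq \tfrac12 u_tL\sqrt d$, hence square at most $\tfrac14 u_t^2L^2 d$. Using $\|a+b\|^2\leq 2\|a\|^2+2\|b\|^2$ yields $\|\bar g(t)-p\|^2\leq \tfrac{2L^2}{n}e_{\mathrm c}(t-1)+\tfrac12 u_t^2L^2 d$.

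Next I would handle the two inner-product/quadratic terms. For the cross term write $-\eta\langle p,\bar g(t)\rangle=-\eta\|p\|^2-\eta\langle p,\bar g(t)-p\rangle$ and apply Young's inequality with parameter $1$, so that $-\eta\langle p,\bar g(t)-p\rangle\leq \tfrac\eta2\|p\|^2+\tfrac\eta2\|\bar g(t)-p\|^2$. For the quadratic term use $\|\bar g(t)\|^2\leq 2\|p\|^2+2\|\bar g(t)-p\|^2$, giving $\tfrac{\eta^2 L}{2}\|\bar g(t)\|^2\leq \eta^2 L\|p\|^2+\eta^2 L\|\bar g(t)-p\|^2$. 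Collecting the $\|p\|^2$ coefficient produces $-\tfrac\eta2+\eta^2 L$, which is $\leq-\tfrac\eta3$ once $\eta L\leq 1/6$ (so $\eta^2 L\leq\eta/6$); collecting the $\|\bar g(t)-p\|^2$ coefficient produces $\tfrac\eta2+\eta^2 L\leq\tfrac{2\eta}{3}$ under the same condition. Substituting the bound on $\|\bar g(t)-p\|^2$ from the previous step, $\tfrac{2\eta}{3}\big(\tfrac{2L^2}{n}e_{\mathrm c}(t-1)+\tfrac12 u_t^2L^2 d\big)=\tfrac{4\eta L^2}{3n}e_{\mathrm c}(t-1)+\tfrac{\eta u_t^2L^2 d}{3}$, which is precisely the claimed inequality.

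The argument itself is routine once the decomposition is set up; the only delicate point is the \emph{constant bookkeeping}. The parameters must be tuned so the three target coefficients emerge exactly rather than merely up to harmless constants: the Young parameter is forced to be $1$, the split must be the $2$/$2$ split above, and the single hypothesis $\eta L\leq 1/6$ must be invoked at both the $\|p\|^2$ and $\|\bar g(t)-p\|^2$ stages. Getting these to line up simultaneously is the main thing to verify; once they do, taking $\eta L\leq 1/6$ closes the proof.
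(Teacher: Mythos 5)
Your proof is correct and is essentially the paper's own argument: the same descent-lemma start, the same split of $\bar g(t)$ into $\nabla f(\bar x(t-1))$ plus the surrogate error, the same consensus/finite-difference decomposition (via Lemma~\ref{lemma:alg2:finite_difference}) giving $\|\bar g(t)-\nabla f(\bar x(t-1))\|^2\leq \frac{2L^2}{n}e_{\mathrm{c}}(t-1)+\frac{1}{2}u_t^2L^2d$, the same Young's inequality with parameter $1$, and the same final invocation of $\eta L\leq 1/6$; indeed your collected coefficients $-\frac{\eta}{2}+\eta^2L$ and $\frac{\eta}{2}+\eta^2L$ match the paper's $-\frac{\eta}{2}(1-2\eta L)$ and $(1+2\eta L)$ factors exactly. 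The only difference is cosmetic bookkeeping (you keep the error term symbolic until the end, while the paper substitutes the bounds earlier), so there is nothing to fix.
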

\begin{proof}
By $\bar x(t)=\bar x(t-1)-\eta\bar g(t)$ and the $L$-smoothness of the function $f$, we have
$$
\begin{aligned}
f(\bar x(t))
\leq\ &
f(\bar x(t-1))
-\eta \langle\nabla f(\bar x(t-1)),\bar g(t)\rangle
+\frac{\eta^2 L}{2}\|\bar g(t)\|^2 \\
=\ &
f(\bar x(t-1))
-\eta\|\nabla f(\bar x(t-1))\|^2
+\frac{\eta^2 L}{2}\|\bar g(t)\|^2 \\
&- \eta
\left\langle \nabla f(\bar x(t-1)),
\frac{1}{n}\sum_{i=1}^n(g^i(t)-f_i(\bar x(t-1))\right\rangle \\
\leq\ &
f(\bar x(t-1))
-\frac{\eta}{2}\|\nabla f(\bar x(t-1))\|^2
+\frac{\eta^2 L}{2}\|\bar g(t)\|^2
+\frac{\eta}{2}
\left\|\frac{1}{n}\sum_{i=1}^n(g^i(t)-\nabla f_i(\bar x(t-1))\right\|^2.
\end{aligned}
$$
Then, by Lemma \ref{lemma:alg2:finite_difference},
\begin{equation}\label{eq:alg2:temp_bound_1}
\begin{aligned}
&\left\|\frac{1}{n}\sum_{i=1}^n\left(g^i(t)-\nabla f_i(\bar x(t-1))\right)\right\|^2 \\
\leq \ &
2\left\|\frac{1}{n}\sum_{i=1}^n\left(\nabla f_i(x^i(t-1))-\nabla f_i(\bar x(t-1))\right)\right\|^2
+2\left(\frac{1}{n}\sum_{i=1}^n
\|g^i(t)-\nabla f_i(x^i(t-1))\|\right)^2 \\
\leq\ &
2\left(\frac{1}{n}\sum_{i=1}^n L
\|x^i(t-1)-\bar x(t-1)\|\right)^2
+\frac{1}{2}u_t^2L^2d \\
\leq\ &
\frac{2L^2}{n}\|x(t-1)-\mathbf{1}_n\otimes\bar x(t-1)\|^2
+\frac{1}{2}u_t^2L^2d,
\end{aligned}
\end{equation}
we see that
$$
\begin{aligned}
f(\bar x(t))
\leq\ &
f(\bar x(t-1))
-\frac{\eta}{2}\|\nabla f(\bar x(t-1))\|^2
+\frac{\eta^2 L}{2}\|\bar g(t)\|^2 \\
&
+
\frac{\eta L^2}{n}\|x(t-1)-\mathbf{1}_n\otimes\bar x(t-1)\|^2
+\frac{\eta u_t^2 L^2 d}{4}.
\end{aligned}
$$
Next, we bound the term $\|\bar g(t)\|^2$:
$$
\begin{aligned}
\|\bar g(t)\|^2
=\ &
\left\|\frac{1}{n}\sum_{i=1}^n g^i(t)\right\|^2
\leq
2\left\|\nabla f(\bar x(t-1))\right\|^2
+2\left\|
\frac{1}{n}\sum_{i=1}^n(g^i(t)
-\nabla f_i(\bar x(t-1)))\right\|^2 \\
\leq\ &
2\left\|\nabla f(\bar x(t-1))\right\|^2
+\frac{4L^2}{n}
\|x(t)-\mathbf{1}_n\otimes\bar x(t)\|^2
+u_t^2L^2d.
\end{aligned}
$$
Then we see that
\begin{equation}\label{eq:alg2:temp_basic_ineq}
\begin{aligned}
f(\bar x(t))
\leq\ &
f(\bar x(t-1))
-\frac{\eta}{2}
\left(1-2\eta L\right)\|\nabla f(\bar x(t-1))\|^2\\
&
+
\frac{\eta L^2}{n}
\left(1+2\eta L\right)\|x(t)-\mathbf{1}_n\otimes\bar x(t-1)\|^2
+\frac{\eta u_t^2 L^2 d}{4}
\left(1+2\eta L\right).
\end{aligned}
\end{equation}
Finally, by using $\eta L\leq 1/6$, we get the desired result.
\end{proof}

\begin{lemma}\label{lemma:alg2:d0_bound}
We have
$$
e_{\mathrm{g}}(1)=\|s(1)-\mathbf{1}_n\otimes\bar g(1)\|^2
\leq \rho^2\left(
\frac{3}{2}\sum_{i=1}^n \|\nabla f_i(x^i(0))\|^2+\frac{3}{4}nu_1^2L^2d
\right).
$$
\end{lemma}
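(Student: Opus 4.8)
The plan is to exploit the trivial structure of the very first iteration. Since Algorithm~\ref{alg:multipoint} is initialized with $s^i(0)=g^i(0)=0$ for every $i$, the stacked recursion $s(t)=(W\otimes I_d)(s(t-1)+g(t)-g(t-1))$ collapses at $t=1$ to $s(1)=(W\otimes I_d)g(1)$. I would first record this identity, together with the gradient-tracking invariant $\bar g(1)=\frac1n\sum_i s^i(1)=\frac1n\sum_i g^i(1)$, which holds because $W$ is doubly stochastic and hence $\mathbf{1}_n^\top W=\mathbf{1}_n^\top$; thus $\bar g(1)$ is simultaneously the block-average of $s(1)$ and of $g(1)$.

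Next I would rewrite the deviation from consensus so that the contraction lemma applies. Because $W\mathbf{1}_n=\mathbf{1}_n$, we have $(W\otimes I_d)(\mathbf{1}_n\otimes\bar g(1))=\mathbf{1}_n\otimes\bar g(1)$, whence
$$
s(1)-\mathbf{1}_n\otimes\bar g(1)
=(W\otimes I_d)\bigl(g(1)-\mathbf{1}_n\otimes\bar g(1)\bigr).
$$
Since $\bar g(1)$ is exactly the average of the blocks of $g(1)$, the vector $g(1)-\mathbf{1}_n\otimes\bar g(1)$ has the form required by Lemma~\ref{lemma:consensus_contraction}, which gives $\|s(1)-\mathbf{1}_n\otimes\bar g(1)\|\leq\rho\,\|g(1)-\mathbf{1}_n\otimes\bar g(1)\|$, and after squaring, $e_{\mathrm{g}}(1)\leq\rho^2\|g(1)-\mathbf{1}_n\otimes\bar g(1)\|^2$.

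It then remains to bound $\|g(1)-\mathbf{1}_n\otimes\bar g(1)\|^2$ by the right-hand side. I would first drop the mean via the variance-reduction identity $\sum_i\|g^i(1)-\bar g(1)\|^2=\sum_i\|g^i(1)\|^2-n\|\bar g(1)\|^2\leq\sum_i\|g^i(1)\|^2$, the same step used in the proof of Lemma~\ref{lemma:alg1:consensus_error}. Then, writing $g^i(1)=\nabla f_i(x^i(0))+\bigl(g^i(1)-\nabla f_i(x^i(0))\bigr)$ and invoking the $2d$-point error bound of Lemma~\ref{lemma:alg2:finite_difference}, namely $\|g^i(1)-\nabla f_i(x^i(0))\|\leq\frac12 u_1 L\sqrt d$, a Young-type split $\|a+b\|^2\leq(1+\theta)\|a\|^2+(1+\theta^{-1})\|b\|^2$ with $\theta=\frac12$ yields
$$
\|g^i(1)\|^2\leq\frac32\|\nabla f_i(x^i(0))\|^2+\frac34 u_1^2 L^2 d.
$$
Summing over $i$ and multiplying by $\rho^2$ produces the claimed inequality.

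There is no genuine obstacle in this lemma: it is the base case $t=1$ of the induction that later controls the gradient-tracking error $e_{\mathrm{g}}(t)$, and its role is simply to supply the initial constant appearing in the definition of $R_0$. The only points requiring a little care are matching the constants---the choice $\theta=\frac12$ in the Young split is precisely what produces the factors $\frac32$ and $\frac34$ at once---and verifying that $\bar g(1)$ is indeed the common average of $s(1)$ and $g(1)$ before invoking the contraction lemma.
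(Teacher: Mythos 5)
Your proposal is correct and follows essentially the same route as the paper's own proof: exploit $s(0)=g(0)=0$ to get $s(1)=(W\otimes I_d)g(1)$, apply the consensus contraction to $g(1)-\mathbf{1}_n\otimes\bar g(1)$, drop the mean via the variance identity, and split $g^i(1)$ around $\nabla f_i(x^i(0))$ with the same Young weights $(\tfrac32,3)$ and the bound of Lemma~\ref{lemma:alg2:finite_difference}. Your added verification that $(W\otimes I_d)$ fixes $\mathbf{1}_n\otimes\bar g(1)$ is a detail the paper leaves implicit, but it is the same argument.
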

\begin{proof}
Since $s(0)=g(0)=0$, we have
$$
\|s(1)-\mathbf{1}_n\otimes\bar g(1)\|^2
=\|(W\otimes I_d)(g(1)-\mathbf{1}_n\otimes\bar g(1))\|^2
\leq \rho^2
\|g(1)-\mathbf{1}_n\otimes\bar g(1)\|^2.
$$
Then since
$$
\begin{aligned}
\|g(1)-\mathbf{1}_n\otimes\bar g(1)\|^2
=\ &
\|g(1)\|^2
+n\|\bar g(1)\|^2
-2\sum_{i=1}^n\left\langle g^i(1),\frac{1}{n}\sum_{j=1}^n g^j(1)\right\rangle \\
=\ &
\|g(1)\|^2
-n\|\bar g(1)\|^2
\leq \|g(1)\|^2,
\end{aligned}
$$
and by Lemma \ref{lemma:alg2:finite_difference},
$$
\begin{aligned}
\|g(1)\|^2
\leq\ &
\sum_{i=1}^n
\left[\frac{3}{2}\|\nabla f^i(x(0))\|^2
+3\|g^i(1)-\nabla f^i(x(0))\|^2
\right] \\
\leq\ &
\frac{3}{2}
\sum_{i=1}^n\|\nabla f^i(x(0))\|^2
+3\sum_{i=1}^n\left(\frac{1}{2}u_1L\sqrt{d}\right)^2
= \frac{3}{2}
\sum_{i=1}^n\|\nabla f^i(x(0))\|^2
+\frac{3}{4}nu_1^2L^2d,
\end{aligned}
$$
we get the desired result.
\end{proof}

The following lemma characterizes the consensus procedure.
\begin{lemma}\label{lemma:alg2_consensus_basic}
Suppose $\eta L\leq 1/6$. Then we have the following component-wise inequality
\begin{equation}\label{eq:temp_consensus_alg2}
\begin{aligned}
\begin{bmatrix}
\frac{5\eta}{2\sqrt{57}L} e_{\mathrm{g}}(t) \\
e_{\mathrm{c}}(t\!-\!1)
\end{bmatrix}
\leq
A
\begin{bmatrix}
\frac{5\eta}{2\sqrt{57}L} e_{\mathrm{g}}(t\!-\!1) \\
\mathrm{e}_{\mathrm{c}}(t\!-\!2)
\end{bmatrix}
+
\frac{2n\eta^3 L \rho^2(1\!+\!2\rho^2)}{3(1-\rho^2)}
\begin{bmatrix}
2\|\nabla f(\bar x(t \!-\! 2))\|^2
\!+\!\mfrac{5}{4}\mfrac{u_{t\!-\!1}^2d}{\eta^2} \\
0
\end{bmatrix},
\end{aligned}
\end{equation}
where
\begin{equation}\label{eq:alg2:mat_A}
A\coloneqq
\begin{bmatrix}
\frac{1+2\rho^2}{3}
+\frac{18\rho^4(1+2\rho^2)}{1-\rho^2}\eta^2 L^2
&
\frac{2\sqrt{57}\rho^2(1+2\rho^2)}{5(1-\rho^2)}\eta L
\\
\frac{2\sqrt{57}\rho^2(1+2\rho^2)}{5(1-\rho^2)}\eta L
&
\frac{1+2\rho^2}{3}
\end{bmatrix}
\end{equation}
\end{lemma}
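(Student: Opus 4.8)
The plan is to establish the two rows of the stated component-wise inequality separately, each as a one-step recursion obtained from the contraction estimate of Lemma~\ref{lemma:consensus_contraction}, and then simply to read off the matrix $A$ and the forcing term. The rescaling factor $\frac{5\eta}{2\sqrt{57}L}$ attached to $e_{\mathrm g}$ plays no role in deriving the individual bounds; it is inserted only at the end to \emph{symmetrize} $A$, so that later (in the proof of Theorem~\ref{theorem:alg2_non_grad_dom}) the operator norm $\|A\|$ equals its spectral radius and can be controlled by the step-size condition. Throughout I use the equivalent vector form of Algorithm~\ref{alg:multipoint} together with $\frac1n\sum_i s^i(t)=\bar g(t)$ and the double stochasticity of $W$.

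For the second row I would start from the identity $x(t)-\mathbf{1}_n\otimes\bar x(t)=(W\otimes I_d)\big[(x(t-1)-\mathbf{1}_n\otimes\bar x(t-1))-\eta(s(t)-\mathbf{1}_n\otimes\bar g(t))\big]$. Applying Lemma~\ref{lemma:consensus_contraction}, expanding the square, and splitting the cross term by Young's inequality with parameter $\frac{1-\rho^2}{3\rho^2}$ turns the leading factor $\rho^2$ into exactly $\frac{1+2\rho^2}{3}$ and leaves a term $\frac{\rho^2(1+2\rho^2)}{1-\rho^2}\eta^2 e_{\mathrm g}(t)$. Shifting $t\mapsto t-1$ yields $e_{\mathrm c}(t-1)\le\frac{1+2\rho^2}{3}e_{\mathrm c}(t-2)+\frac{\rho^2(1+2\rho^2)}{1-\rho^2}\eta^2 e_{\mathrm g}(t-1)$, and dividing the $e_{\mathrm g}$ coefficient by the scaling $\frac{5\eta}{2\sqrt{57}L}$ reproduces precisely the $(2,2)$ and $(2,1)$ entries of $A$ with a zero forcing.

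The first row is the crux. I would write $s(t)-\mathbf{1}_n\otimes\bar g(t)=(W\otimes I_d)\big[(s(t-1)-\mathbf{1}_n\otimes\bar g(t-1))+P(g(t)-g(t-1))\big]$, where $P=I_{nd}-\frac1n\mathbf{1}_n\mathbf{1}_n^\top\otimes I_d$ is the averaging projection; this follows from the tracking update and $\frac1n\sum_i s^i(t)=\bar g(t)$. Applying Lemma~\ref{lemma:consensus_contraction} with Young's inequality (again with parameter $\frac{1-\rho^2}{3\rho^2}$) and $\|P(g(t)-g(t-1))\|\le\|g(t)-g(t-1)\|$ gives $e_{\mathrm g}(t)\le\frac{1+2\rho^2}{3}e_{\mathrm g}(t-1)+\frac{\rho^2(1+2\rho^2)}{1-\rho^2}\|g(t)-g(t-1)\|^2$. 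It then remains to bound $\|g(t)-g(t-1)\|^2$. Writing $g^i(t)=\nabla f_i(x^i(t-1))+\varepsilon^i(t)$ with $\|\varepsilon^i(t)\|\le\frac12 u_tL\sqrt d$ (Lemma~\ref{lemma:alg2:finite_difference}), a weighted split separates the smooth part $\sum_i\|\nabla f_i(x^i(t-1))-\nabla f_i(x^i(t-2))\|^2\le L^2\|x(t-1)-x(t-2)\|^2$ from the finite-difference part, which (using monotonicity of $u_t$) contributes the $u_{t-1}^2 d$ forcing. Finally I would expand $x(t-1)-x(t-2)=(W\otimes I_d-I_{nd})(x(t-2)-\mathbf{1}_n\otimes\bar x(t-2))-\eta(W\otimes I_d)(s(t-1)-\mathbf{1}_n\otimes\bar g(t-1))-\eta\,\mathbf{1}_n\otimes\bar g(t-1)$ and bound its squared norm in terms of $e_{\mathrm c}(t-2)$, $e_{\mathrm g}(t-1)$ and $\|\bar g(t-1)\|^2$; converting $\|\bar g(t-1)\|^2$ into $2\|\nabla f(\bar x(t-2))\|^2$ plus an $\frac{L^2}{n}e_{\mathrm c}(t-2)$ term and a $u_{t-1}^2 d$ term (exactly the one-step-shifted estimate \eqref{eq:alg2:temp_bound_1}) leaves a bound involving only $e_{\mathrm c}(t-2)$, $e_{\mathrm g}(t-1)$, $\|\nabla f(\bar x(t-2))\|^2$ and $u_{t-1}^2$. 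Substituting back and rescaling produces the $(1,1)$ and $(1,2)$ entries of $A$ and the stated forcing vector.

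The main obstacle is the constant bookkeeping in the first row, and two features make it delicate. First, the expansion of $x(t-1)-x(t-2)$ must be organized so that only the time-$(t-2)$ consensus error appears and never $e_{\mathrm c}(t-1)$; this is precisely what allows the recursion to close in two variables, and the term $(W\otimes I_d-I_{nd})(x(t-2)-\mathbf{1}_n\otimes\bar x(t-2))$, controlled through $\|(W-I)v\|\le(1+\rho)\|v\|$ on the disagreement subspace, is the sensitive one. Second, the several Young's-inequality weights must be chosen jointly so that, after multiplying $e_{\mathrm g}$ by $\frac{5\eta}{2\sqrt{57}L}$, both diagonal entries come out as exactly $\frac{1+2\rho^2}{3}$, the two off-diagonal entries coincide at $\frac{2\sqrt{57}\rho^2(1+2\rho^2)}{5(1-\rho^2)}\eta L$ (this coincidence is what pins down the constant $\sqrt{57}$ via $\sigma^2=\tfrac{\eta^2}{L^2}\cdot\tfrac{25}{228}$), and the forcing collapses to the single prefactor $\frac{2n\eta^3 L\rho^2(1+2\rho^2)}{3(1-\rho^2)}$. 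Verifying this matching, rather than any conceptual step, is the bulk of the work.
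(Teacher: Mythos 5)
Your proposal follows essentially the same route as the paper's proof: the same one-step contraction recursions for $e_{\mathrm{c}}$ and $e_{\mathrm{g}}$ (with identical Young parameters), the same finite-difference bound on $\|g(t)-g(t-1)\|^2$, the same mean/disagreement decomposition of $x(t-1)-x(t-2)$ closing the recursion at time $t-2$, and the same rescaling of $e_{\mathrm{g}}$ to symmetrize $A$. The only slight imprecision is your closing remark that both diagonal entries come out as exactly $\frac{1+2\rho^2}{3}$: the $(1,1)$ entry of $A$ necessarily carries the extra $\frac{18\rho^4(1+2\rho^2)}{1-\rho^2}\eta^2L^2$ term generated by the $e_{\mathrm{g}}(t-1)$ contribution inside $\|x(t-1)-x(t-2)\|^2$, exactly as your own derivation would produce.
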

\begin{proof}
We have
$$
\begin{aligned}
&s(t)-\mathbf{1}_n\otimes\bar g(t) \\
=\ &(W\otimes I_d)\left(s(t-1)-\mathbf{1}_n\otimes\bar g(t-1)+g(t)-g(t-1)-\mathbf{1}_n\otimes\bar g(t)+\mathbf{1}_n\otimes\bar g(t-1)\right).
\end{aligned}
$$
Then since
$$
\begin{aligned}
&\|g(t)-g(t-1)-\mathbf{1}_n\otimes\bar g(t)+\mathbf{1}_n\otimes\bar g(t-1)\|^2 \\
=\ & \|g(t)-g(t-1)\|^2
+ n \|\bar g(t)-\bar g(t-1)\|^2
-2\sum_{i=1}^n \langle g^i(t)-g^i(t-1), \bar g(t)-\bar g(t-1)\rangle \\
=\ & \|g(t)-g(t-1)\|^2
- n \|\bar g(t)-\bar g(t-1)\|^2
\leq \|g(t)-g(t-1)\|^2,
\end{aligned}
$$
we have
$$
\begin{aligned}
&
e_{\mathrm{g}}(t)=\|s(t)-\mathbf{1}_n\otimes \bar g(t)\|^2 \\
\leq\ &
\rho^2\left(\|s(t-1)-\mathbf{1}_n\otimes \bar g(t-1)\|
+\|g(t)-g(t-1)\|\right)^2 \\
\leq\ &
\rho^2
\cdot\left[
\left(1+\frac{1-\rho^2}{3\rho^2}\right)\|s(t-1)-\mathbf{1}_n\otimes \bar g(t-1)\|^2
+\left(1+\frac{3\rho^2}{1-\rho^2}\right)\|g(t)-g(t-1)\|^2
\right] \\
=\ &
\frac{1+2\rho^2}{3}\|s(t-1)-\mathbf{1}_n\otimes \bar g(t-1)\|^2
+ \frac{\rho^2(1+2\rho^2)}{1-\rho^2}\sum_{i=1}^n \|g^i(t)-g^i(t-1)\|^2.
\end{aligned}
$$
Now since
$$
\begin{aligned}
\|g^i(t)-g^i(t-1)\|^2 \leq\ &
2\left\|
\nabla f_i(x^i(t-1))-\nabla f_i(x^i(t-2))
\right\|^2 \\
& + 
2 \left\|
g^i(t)-\nabla f_i(x^i(t-1))
-g^i(t-1)+\nabla f_i(x^i(t-2))
\right\|^2
\\
\leq\ &
2\left\|
\nabla f_i(x^i(t-1))-\nabla f_i(x^i(t-2))
\right\|^2
+2\left(\frac{u_t+u_{t-1}}{2}L\sqrt{d}\right)^2 \\
\leq\ &
2L^2 \|x^i(t-1)-x^i(t-2)\|^2
+2u_{t-1}^2L^2d,
\end{aligned}
$$
we get
$$
\begin{aligned}
e_{\mathrm{g}}(t)
\leq
\frac{1+2\rho^2}{3}
e_{\mathrm{g}}(t-1)+
\frac{2\rho^2(1+2\rho^2)L^2}{1-\rho^2} \|x(t-1)-x(t-2)\|^2
+\frac{2\rho^2(1+2\rho^2)L^2}{1-\rho^2}nu_{t-1}^2d.
\end{aligned}
$$
Now,
$$
\begin{aligned}
&x(t-1)-x(t-2) \\
=\ &((W\otimes I_d)-I_{nd})x(t-2)
-\eta (W\otimes I_d)s(t-1) \\
=\ &((W\otimes I_d)-I_{nd})(x(t-2)-\mathbf{1}_n\otimes\bar x(t-2))
-\eta (W\otimes I_d)(s(t-1)-\mathbf{1}_n\otimes\bar g(t-1)) \\
& - \eta \mathbf{1}_n\otimes
(\bar g(t-1)-\nabla f(\bar x(t-2)))
-\eta\mathbf{1}_n\otimes \nabla f(\bar x(t-2)).
\end{aligned}
$$
We notice that for any $u_1,\ldots,u_n\in\mathbb{R}^{d}$ and $v\in\mathbb{R}^d$, we have
\begin{equation}\label{eq:temp_inner_prod_vanish}
\sum_{i=1}^n\left\langle u_i-\frac{1}{n}\sum_{j=1}^n u_j,
v\right\rangle=0,
\qquad
\textrm{and}
\qquad
\sum_{i=1}^n\left\langle \sum_{j=1}^n W_{ij}u_j-\frac{1}{n}\sum_{j=1}^n u_j,
v\right\rangle=0.
\end{equation}
In addition, similar to \eqref{eq:alg2:temp_bound_1}, we can show that
$$
\begin{aligned}
\|\bar g(t-1)-\nabla f(\bar x(t-2))\|^2 
\leq\ &
\frac{18L^2}{17n}\|x(t-2)-\mathbf{1}_n\otimes\bar x(t-2)\|^2
+\frac{9}{2}u_{t-1}^2L^2d.
\end{aligned}
$$
Therefore we get
$$
\begin{aligned}
&\|x(t-1)-x(t-2)\|^2 \\
=\ &
\|((W\otimes I_d)-I_{nd})(x(t-2)-\mathbf{1}_n\otimes\bar x(t-2))-\eta(W\otimes I_d)(s(t-1)-\mathbf{1}_n\otimes\bar g(t-1))\|^2 \\
&+\eta^2 n\|\bar g(t-1)-\nabla f(\bar x(t-2))+\nabla f(\bar x(t-2))\|^2 \\
\leq\ &
\frac{9}{2}\|x(t-2)-\mathbf{1}_n\otimes\bar x(t-2)\|^2
+9\eta^2\rho^2
\|s(t-1)-\mathbf{1}_n\otimes\bar g(t-1)\|^2 \\
&
+2\eta^2 n\|\bar g(t-1)
-\nabla f(\bar x(t-2))\|^2
+2\eta^2 n\|\nabla f(\bar x(t-2))\|^2
\\
\leq\ &
\left(\frac{9}{2}+\frac{36}{17}\eta^2 L^2\right)\|x(t-2)-\mathbf{1}_n\otimes\bar x(t-2)\|^2 \\
&+9\eta^2\rho^2
\|s(t-1)-\mathbf{1}_n\otimes\bar g(t-1)\|^2
+
2\eta^2 n\|\nabla f(\bar x(t-2))\|^2
+9\eta^2nu_{t-1}^2L^2d
\\
\leq\ &
\frac{155}{34}\|x(t-2)-\mathbf{1}_n\otimes\bar x(t-2)\|^2 
+9\eta^2\rho^2
\|s(t-1)-\mathbf{1}_n\otimes\bar g(t-1)\|^2 \\
&+
2\eta^2 n\|\nabla f(\bar x(t-2))\|^2
+\frac{1}{4}nu_{t-1}^2d,
\end{aligned}
$$
where the first inequality follows from $\|W\otimes I_d-I_{nd}\|\leq 2$ and that $\|u+v\|^2\leq (1+1/\epsilon)\|u\|^2+(1+\epsilon)\|v\|^2$ for any vectors $u,v$ and $\epsilon>0$, and the third inequality follows from $\eta L\leq 1/6$. Consequently
$$
\begin{aligned}
e_{\mathrm{g}}(t)
\leq\ &
\left(\frac{1+2\rho^2}{3}+\frac{18\rho^4(1+2\rho^2)}{1-\rho^2}\eta^2L^2\right)
e_{\mathrm{g}}(t-1)
+\frac{228\rho^2(1+2\rho^2)}{25(1-\rho^2)}
L^2
\|x(t-2)-\mathbf{1}_n\otimes\bar x(t-2)\|^2 \\
&+\frac{2\rho^2(1+2\rho^2)}{1-\rho^2}
\left(2\eta^2 L^2 n\|\nabla f(\bar x(t-2))\|^2
+\frac{5}{4}nL^2 u_{t-1}^2d\right),
\end{aligned}
$$
where we used $155/34<114/25$. On the other hand,
$$
\begin{aligned}
&
e_{\mathrm{c}}(t-1)=\|x(t-1)-\mathbf{1}_n\otimes \bar x(t-1)\|^2 \\
=\ &
\|(W\otimes I_d)[x(t-2)-\mathbf{1}_n\otimes\bar x(t-2)
-\eta(s(t-1)-\mathbf{1}_n\otimes\bar g(t-1))]\|^2 \\
\leq\ &
\frac{1+2\rho^2}{3} e_{\mathrm{c}}(t-2)
+\frac{\rho^2(1+2\rho^2)}{1-\rho^2}\eta^2
e_{\mathrm{g}}(t-1).
\end{aligned}
$$
By combining these results, we get the desired inequality \eqref{eq:temp_consensus_alg2}.
\end{proof}

\begin{lemma}
If
\begin{equation}\label{eq:cond_stepsize_1}
\eta L \leq 
\min\left\{
\frac{1}{6},
\frac{(1-\rho^2)^2}{4\rho^2(3+4\rho^2)}
\right\},
\end{equation}
then
\begin{equation}\label{eq:alg2:consensus_bound}
\begin{aligned}
\max\left\{
e_{\mathrm{c}}(t),
\frac{3\eta }{10L}
e_{\mathrm{g}}(t+1)
\right\}
\leq\ &
\!\left(
\frac{2+\rho^2}{3}
\right)^{\!t}
n R_0
+\frac{4n\eta^3 L \rho^2(1+2\rho^2)}{3(1-\rho^2)}
\sum_{\tau=0}^{t-1}
\left(\frac{2+\rho^2}{3}\right)^{\!\tau}
\!\|\nabla f(\bar x(t \!-\! \tau \!-\! 1))\|^2 \\
&
+\frac{5n\eta L d\rho^2(1+2\rho^2)}{6(1-\rho^2)}
\sum_{\tau=0}^{t-1} \left(\frac{2+\rho^2}{3}\right)^{\tau}u_{t-\tau}^2,
\end{aligned}
\end{equation}
where we recall that
$$
R_0
\coloneqq
\frac{1}{n}\sum_{i=1}^n\!
\left(\! \frac{\eta \rho^2 }{2L}\|\nabla f_i(x^i(0))\|^2
\!+\!\|x^i(0)-\bar x(0)\|^2 \!\right)
+\frac{\eta\rho^2 u_1^2 Ld}{4}.
$$
Consequently
\begin{equation}\label{eq:alg2:sum_consensus_error}
\begin{aligned}
\max\left\{\sum_{\tau=0}^{t-1}
e_{\mathrm{c}}(\tau),
\frac{3\eta}{10L}\sum_{\tau=1}^{t}
e_{\mathrm{g}}(\tau)\right\}
\leq\ &
\frac{3n R_0}{1-\rho^2}
+\frac{4n\eta^3L\rho^2(1+2\rho^2)}{(1-\rho^2)^2}
\sum_{\tau=0}^{t-2}\|\nabla f(\bar x(\tau))\|^2  \\
&
+\frac{5n \eta L d \rho^2(1+2\rho^2)}{2(1-\rho^2)^2}
\sum_{\tau=1}^{t-1}u_\tau^2.
\end{aligned}
\end{equation}
\end{lemma}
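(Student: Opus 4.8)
The plan is to read the coupled inequality of Lemma~\ref{lemma:alg2_consensus_basic} as a two-dimensional linear recursion with nonnegative forcing and to distill from it a single scalar contraction. Reindex the state as the pair $v(t) = (e_{\mathrm{c}}(t),\, w\,e_{\mathrm{g}}(t+1))$ with the symmetrizing weight $w = \tfrac{5\eta}{2\sqrt{57}L}$; shifting $t\mapsto t+1$ in Lemma~\ref{lemma:alg2_consensus_basic} gives the componentwise bound $0 \le v(t) \le A\,v(t-1) + f(t)$ for $t\ge 1$, where $A$ is the symmetric matrix \eqref{eq:alg2:mat_A} and $f(t)$ has a single nonzero entry $\tfrac{2n\eta^3 L\rho^2(1+2\rho^2)}{3(1-\rho^2)}\bigl(2\|\nabla f(\bar x(t-1))\|^2 + \tfrac54\,\tfrac{u_t^2 d}{\eta^2}\bigr)$ sitting in the $w\,e_{\mathrm{g}}$ slot.

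The crux is to show that under \eqref{eq:cond_stepsize_1} the operator norm satisfies $\|A\| = \lambda_{\max}(A) \le \tfrac{2+\rho^2}{3}$. Since $A$ is symmetric with nonnegative entries, its Euclidean norm equals its largest eigenvalue $\tfrac{a+c}{2} + \sqrt{(\tfrac{a-c}{2})^2 + b^2}$, where $a,b,c$ are the entries of \eqref{eq:alg2:mat_A}. The two diagonal entries are $\tfrac{1+2\rho^2}{3}$ and $\tfrac{1+2\rho^2}{3}+O(\eta^2 L^2)$, so the claim reduces to bounding the correction $\tfrac{a-c}{2}+\sqrt{(\tfrac{a-c}{2})^2+b^2}$ by the available slack $\tfrac{1-\rho^2}{3}$. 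Here $b = \Theta(\eta L)$ dominates while $\tfrac{a-c}{2} = \Theta(\eta^2 L^2)$ is higher order, so to leading order this is the clean requirement $b \le \tfrac{1-\rho^2}{3}$; the step-size bound \eqref{eq:cond_stepsize_1} is calibrated so that the $O(\eta L)$ and $O(\eta^2 L^2)$ contributions together stay under $\tfrac{1-\rho^2}{3}$.

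Granting this spectral bound, the scalar contraction follows from monotonicity of the Euclidean norm on nonnegative vectors: from $0 \le v(t) \le A\,v(t-1)+f(t)$ we get $\|v(t)\| \le \|A\|\,\|v(t-1)\| + \|f(t)\| \le \tfrac{2+\rho^2}{3}\|v(t-1)\| + \|f(t)\|$. For the base case I would invoke Lemma~\ref{lemma:alg2:d0_bound} and use the crude bound $\|v(0)\| \le e_{\mathrm{c}}(0) + w\,e_{\mathrm{g}}(1)$; the factors $\tfrac12$ and $\tfrac14$ in the definition of $R_0$ are exactly what make $e_{\mathrm{c}}(0)+w\,e_{\mathrm{g}}(1)\le nR_0$, and bounding $\|v(0)\|$ by this $\ell^1$-type sum rather than keeping it as a Euclidean norm is what avoids a spurious $\sqrt2$. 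Unrolling then yields $\|v(t)\| \le \bigl(\tfrac{2+\rho^2}{3}\bigr)^t nR_0 + \sum_{\tau=0}^{t-1}\bigl(\tfrac{2+\rho^2}{3}\bigr)^\tau\|f(t-\tau)\|$, and since $\|f(t-\tau)\|$ is just its single nonzero component and $\tfrac{3\eta}{10L} < w$, the quantity $\max\{e_{\mathrm{c}}(t),\tfrac{3\eta}{10L}e_{\mathrm{g}}(t+1)\}$ is at most $\|v(t)\|$; substituting the two pieces of $\|f\|$ produces exactly \eqref{eq:alg2:consensus_bound}. The consequence \eqref{eq:alg2:sum_consensus_error} then comes from summing \eqref{eq:alg2:consensus_bound} over $t$ and interchanging summation in the convolution, using $\sum_{\tau\ge 0}\bigl(\tfrac{2+\rho^2}{3}\bigr)^\tau = \tfrac{3}{1-\rho^2}$, which turns the coefficients into those of the cumulative bound.

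The hard part will be the spectral estimate $\lambda_{\max}(A)\le\tfrac{2+\rho^2}{3}$ for all $\rho\in(0,1)$: the step-size condition \eqref{eq:cond_stepsize_1} is the minimum of $1/6$ and a $\rho$-dependent quantity, and which of the two binds switches near $\rho^2\approx 0.23$, so the verification requires a short case split. This is also where all the delicate numerical constants are spent — the $\sqrt{57}$ in $w$, and the $\tfrac{114}{25} > \tfrac{155}{34}$ rounding already built into $A$ — and getting the final coefficients to land precisely on \eqref{eq:alg2:consensus_bound} depends on these choices.
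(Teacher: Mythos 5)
Your proposal follows essentially the same route as the paper's proof: recast Lemma~\ref{lemma:alg2_consensus_basic} as a nonnegative two-dimensional recursion in $\bigl(e_{\mathrm{c}},\,\tfrac{5\eta}{2\sqrt{57}L}e_{\mathrm{g}}\bigr)$, establish the spectral bound $\|A\|\leq\tfrac{2+\rho^2}{3}$ for the symmetric matrix \eqref{eq:alg2:mat_A}, handle the base case via Lemma~\ref{lemma:alg2:d0_bound} together with $\tfrac{3}{10}<\tfrac{5}{2\sqrt{57}}<\tfrac{1}{3}$, and sum the geometric convolution, with your coefficient bookkeeping ($\tfrac{2}{3}\cdot 2=\tfrac{4}{3}$ and $\tfrac{2}{3}\cdot\tfrac{5}{4}=\tfrac{5}{6}$) landing exactly on \eqref{eq:alg2:consensus_bound} and \eqref{eq:alg2:sum_consensus_error}. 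The one step you defer — verifying $\lambda_{\max}(A)\leq\tfrac{2+\rho^2}{3}$ — is carried out in the paper by computing $\|A\|$ in closed form, solving the resulting inequality for $(\eta L)^2$, and checking the polynomial bound $\tfrac{1}{25}\bigl(3402+8208\rho^2+4158\rho^4+2700\rho^6\bigr)\leq\bigl[4(3+4\rho^2)\bigr]^2$; note that no case split on the min in \eqref{eq:cond_stepsize_1} is needed, since the $\rho$-dependent term alone implies the spectral bound.
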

\begin{proof}

By induction on \eqref{eq:temp_consensus_alg2}, we get
$$
\begin{aligned}
\begin{bmatrix}
\frac{5\eta}{2\sqrt{57}L} e_{\mathrm{g}}(t+1) \\
e_{\mathrm{c}}(t)
\end{bmatrix}
\leq\ &
A^t \!
\begin{bmatrix}
\frac{5\eta}{2\sqrt{57}L}
e_{\mathrm{g}}(1) \\
e_{\mathrm{c}}(0)
\end{bmatrix} \\
&
+\frac{5n\eta^3 L\rho^2(1+2\rho^2)}{\sqrt{57}(1-\rho^2)}
\sum_{\tau=0}^{t-1}
A^{\tau} \!
\begin{bmatrix}
2\|\nabla f(\bar x(t\!-\!\tau\!-\!1))\|^2
+\frac{5}{4}\eta^{-2}u^2_{t-\tau}d \\
0
\end{bmatrix},
\end{aligned}
$$
and consequently
$$
\begin{aligned}
&
\max\left\{e_{\mathrm{c}}(t),\frac{5\eta}{2\sqrt{57}L}e_{\mathrm{g}}(t+1)\right\} \\
\leq\ &
\|A\|^t
\left(\frac{5\eta}{2\sqrt{57}L} e_{\mathrm{g}}(1)
+e_{\mathrm{c}}(0)\right) \\
&
+\frac{10n\eta^3 L\rho^2(1+2\rho^2)}{\sqrt{57}(1-\rho^2)}
\sum_{\tau=0}^{t-1} \|A\|^{\tau}\|\nabla f(\bar x(t-\tau-1))\|^2
+\frac{25n \eta L d\rho^2(1+2\rho^2)}{4\sqrt{57}(1-\rho^2)}
\sum_{\tau=0}^{t-1} \|A\|^{\tau}u_{t-\tau}^2,
\end{aligned}
$$
where we used the fact that $\max\{a,b\}\leq \sqrt{a^2+b^2}\leq a+b$ for any $a\geq 0$ and $b\geq 0$.

Now, since $A$ is symmetric, straightforward calculation shows that
\begin{equation}\label{eq:alg2:norm_A}
\|A\|
=\frac{1+2\rho^2}{3(1-\rho^2)}
\left(
1-\rho^2+27\rho^4(\eta L)^2
+\frac{3\sqrt{3}}{5}\sqrt{76\rho^4(\eta L)^2+675\rho^8(\eta L)^4}
\right).
\end{equation}
By solving the inequality $\|A\|\leq (2+\rho^2)/3$, we get
$$
(\eta L)^2
\leq \frac{25(1-\rho^2)^4}{\rho^4
(3402+8208\rho^2+4158\rho^4+2700\rho^6)}.
$$
It can be checked that
$$
\frac{1}{25}(3402+8208\rho^2+4158\rho^4+2700\rho^6)
\leq \left[4(3+4\rho^2)\right]^2,\qquad\forall \rho\in[0,1).
$$
Therefore if $\eta L$ satisfies \eqref{eq:cond_stepsize_1}, we have $\|A\|\leq(2+\rho^2)/3$. By Lemma \ref{lemma:alg2:d0_bound} and that $3/10<5/(2\sqrt{57})<1/3$, we get \eqref{eq:alg2:consensus_bound}. The bound \eqref{eq:alg2:sum_consensus_error} follows by taking the sum of \eqref{eq:alg2:consensus_bound} and using
$$
\sum_{\tau=1}^{t-1}\sum_{s=0}^{\tau-1}\theta^s a_{\tau-s}
=\sum_{\tau=1}^{t-1}\sum_{s=1}^{\tau}\theta^{\tau-s} a_{s}
=\sum_{s=1}^{t-1}a_{s}\sum_{\tau=s}^{t-1}\theta^{\tau-s}
\leq \frac{1}{1-\theta}\sum_{s=1}^{t-1}a_{s}
$$
for any nonnegative sequence $(a_s)_{s\in\mathbb{N}}$ and $\theta\in(0,1)$.
\end{proof}

Now we are ready to prove Theorem 3 in the main text.

\begin{proof}[Proof of Theorem 3]
Let $t\geq 1$ be arbitrary. By Lemma \ref{lemma:alg2:basic_ineq} and \eqref{eq:alg2:sum_consensus_error}, we see that
$$
\begin{aligned}
0\leq\ &
\delta(0)
-\frac{\eta}{3}\sum_{\tau=0}^{t-1}\|\nabla f(\bar x(\tau))\|^2
+\frac{\eta L^2d}{3}\sum_{\tau=1}^t u_\tau^2\\
&
+\frac{4\eta L^2}{3n}
\left(
\frac{3nR_0}{1-\rho^2}
+\frac{4n\eta^3 L\rho^2(1+2\rho^2)}{(1-\rho^2)^2}
\sum_{\tau=0}^{t-2}\|\nabla f(\bar x(\tau))\|^2 
+\frac{5n \eta L d\rho^2(1+2\rho^2)}{2(1-\rho^2)^2}
\sum_{\tau=1}^{t-1}u_\tau^2\right)  \\
\leq\ &
\delta(0)
+\frac{4\eta L^2R_0}{1-\rho^2}
-\frac{\eta}{3}
\left(
1
-\frac{16\eta^3 L^3 \rho^2 (1+2\rho^2)}{(1-\rho^2)^2}\right)\sum_{\tau=0}^{t-1}\|\nabla f(\bar x(\tau))\|^2 \\
&
+\left(\frac{10\eta L\rho^2(1+2\rho^2)}{3(1-\rho^2)^2}
+\frac{1}{3}\right)\eta L^2 d
\sum_{\tau=1}^{t}u_\tau^2.
\end{aligned}
$$
Then since
$$
\begin{aligned}
\frac{16\eta^3 L^3\rho^2 (1+2\rho^2)}{(1-\rho^2)^2}
\leq\ &
\frac{16}{36}
\cdot \frac{(1-\rho^2)^2}{4\rho^2(3+4\rho^2)}\frac{ \rho^2(1+2\rho^2)}{(1-\rho^2)^2}
=
\frac{4}{9}\frac{(1+2\rho^2)}{4(2+4\rho^2)}
=\frac{1}{18},
\end{aligned}
$$
and $\frac{1}{3}(1-1/18)=\frac{17}{54}\geq\frac{5}{16}$, and
$$
\frac{10\eta L\rho^2(1+2\rho^2)}{3(1-\rho^2)^2}
\leq \frac{10}{3}\cdot\frac{(1-\rho^2)^2}{4\rho^2(3+4\rho^2)}\cdot \frac{\rho^2(1+2\rho^2)}{(1-\rho^2)^2}
\leq\frac{5}{12},
$$
we get
$$
\begin{aligned}
0\leq\ &
f(\bar x(0))
+\frac{4\eta L^2 R_0}{1-\rho^2}
-\frac{5\eta}{16}
\sum_{\tau=0}^{t-1}\|\nabla f(\bar x(\tau))\|^2
+\frac{3}{4}\eta L^2 d
\sum_{\tau=1}^{t}u_\tau^2.
\end{aligned}
$$
Since $u_t^2$ is summable, this implies that $\|\nabla f(\bar x(t))\|$ converges to zero, and we have
\begin{equation}
\frac{1}{t}\sum_{\tau=0}^{t-1}
\|\nabla f(\bar x(\tau))\|^2
\leq \frac{1}{t}\cdot\left[\frac{3.2 \delta(0)}{\eta}
+ \frac{12.8 L^2 R_0 }{1-\rho^2}
+2.4 L^2 d
\sum_{\tau=1}^{\infty}u_\tau^2\right]
\tag{\ref{eq:alg2:convergence_alg2}}
\end{equation}

Now by \eqref{eq:alg2:sum_consensus_error} and \eqref{eq:alg2:convergence_alg2}, we see that
$e_{\mathrm{c}}(t)$ is summable, and
$$
\begin{aligned}
\frac{1}{n}\sum_{\tau=0}^\infty
e_{\mathrm{c}}(\tau)
\leq\ &
\frac{3R_0}{1-\rho^2}
+\frac{4\eta^3L\rho^2(1+2\rho^2)}{(1-\rho^2)^2}
\left[\frac{3.2 \delta(0)}{\eta}
+ \frac{12.8 L^2 R_0 }{1-\rho^2}
+2.4 L^2 d
\sum_{\tau=1}^{\infty}u_\tau^2\right] \\
&+\frac{5 \eta L d \rho^2(1+2\rho^2)}{2(1-\rho^2)^2}
\sum_{\tau=1}^{t-1}u_\tau^2 \\
<\ &
1.6\eta \delta(0)
+\frac{3.2 R_0}{1-\rho^2}
+0.35d\sum_{\tau=1}^\infty u_\tau^2.
\end{aligned}
$$
For the convergence of $s(t)$, we have
$$
\begin{aligned}
&\frac{1}{n}\sum_{\tau=1}^\infty
\|s(\tau)-\mathbf{1}_n\otimes\nabla f(\bar x(\tau-1))\|^2 \\
\leq\ &
\frac{3}{2n}\sum_{\tau=1}^\infty
\|s(\tau)-\mathbf{1}_n\otimes\bar g(\tau)\|^2
+3\sum_{\tau=0}^\infty\|\bar g(\tau)-\nabla f(\bar x(\tau-1))\|^2 \\
\leq\ &
\frac{3}{2n}\sum_{\tau=1}^\infty
e_{\mathrm{g}}(\tau)
+3\sum_{\tau=1}^\infty
\left(\frac{2L^2}{n}
\|x(\tau-1)-\mathbf{1}_n\otimes\bar x(\tau-1)\|^2
+\frac{1}{2}u_\tau^2L^2d
\right) \\
\leq\ &
\left(\frac{3}{2}\cdot\frac{10L}{3\eta}
+6L^2\right)
\left(1.6\eta \delta(0)
+\frac{3.2 R_0}{1-\rho^2}
+0.35d\sum_{\tau=1}^\infty u_\tau^2\right)
+\frac{3}{2}\sum_{\tau=1}^\infty u_\tau^2 L^2 d \\
\leq\ &
9.6L \delta(0)
+ \frac{19.2LR_0}{\eta(1-\rho^2)}
+\frac{2.35}{\eta}
Ld\sum_{\tau=1}^\infty u_\tau^2,
\end{aligned}
$$
where we used \eqref{eq:alg2:temp_bound_1} and $\eta L\leq 1/6$. Finally, since $u_t^2$ is also summable, by Lemma \ref{lemma:alg2:basic_ineq} and the deterministic version of Lemma \ref{lemma:martingale_converge}, we see that $f(\bar x(t))$ converges.
\end{proof}

\subsection{Proof of Theorem~\ref{theorem:alg2_grad_dom}}

We shall still use the notations
$$
x(t)=\begin{bmatrix}
x^1(t) \\
\vdots \\
x^n(t)
\end{bmatrix},
\quad
g(t)=\begin{bmatrix}
g^1(t) \\
\vdots \\
g^n(t)
\end{bmatrix},
\quad
s(t)=\begin{bmatrix}
s^1(t) \\
\vdots \\
s^n(t)
\end{bmatrix},
\qquad
\bar x(t)=\frac{1}{n}\sum_{i=1}^n x^i(t),
\quad
\bar g(t)=\frac{1}{n}\sum_{i=1}^n g^i(t),
$$
and $\delta(t)=f(\bar x(t))-f^\ast$, $e_{\mathrm{c}}(t)=\|x(t)-\mathbf{1}_n\otimes\bar{x}(t)\|^2$, $e_{\mathrm{g}}(t)= \|s(t)-\mathbf{1}_n \otimes \bar g(t)\|^2$. Also recall that the iterations of Algorithm~\ref{alg:multipoint} can be equivalently written as
$$
\begin{aligned}
s(t) &= (W\otimes I_d)(s(t-1)+g(t)-g(t-1)), \\
x(t) &=(W\otimes I_d)(x(t-1)-\eta s(t)),
\end{aligned}
$$
and that
$$
\frac{1}{n}\sum_{i=1}^n s^i(t)=\bar g(t),\qquad\qquad
\bar x(t) = \bar x(t-1)-\eta\bar g(t).
$$

Let $\theta:=\mu/L$. By Lemma \ref{lemma:gradient_upper_bound}, we see that $\mu\leq L$. Notice that the condition on the step size
\begin{equation}\label{eq:alg2:condition_stepsize_grad_dom}
\eta L=\alpha\cdot \left(\frac{\mu}{L}\right)^{\frac{1}{3}}\frac{(1-\rho^2)^2}{14}
\end{equation}
implies $\eta L\leq 1/6$. By \eqref{eq:temp_consensus_alg2} of Lemma~\ref{lemma:alg2_consensus_basic} and Lemma~\ref{lemma:gradient_upper_bound}, we get
$$
\begin{bmatrix}
\frac{5\eta}{2\sqrt{57} L}
e_{\mathrm{g}}(t)\\
e_{\mathrm{c}}(t-1)
\end{bmatrix}
\leq
A
\begin{bmatrix}
\frac{5\eta}{2\sqrt{57} L}
e_{\mathrm{g}}(t-1)\\
e_{\mathrm{c}}(t-2)
\end{bmatrix}+\frac{2n\eta L \rho^2(1+2\rho^2)}{3(1-\rho^2)}
\begin{bmatrix}
4\eta^2 L \delta(t-2)
+\frac{5}{4}u_{t-1}^2d \\
0
\end{bmatrix},
$$
where $A$ is given by \eqref{eq:alg2:mat_A} and the norm of $A$ is given by \eqref{eq:alg2:norm_A}. By solving the inequality $\|A\|\leq 1-(1-\rho^2)^2/21$, we get
$$
(\eta L)^2
\leq
\frac{25(1-\rho^2)^4(13+\rho^2)^2}{\rho^4(
223398 + 411642 \rho^2 + 33642 \rho^4 + 217350 \rho^6 + 18900 \rho^8
)}.
$$
It can be verified that
$$
\frac{25(13+\rho^2)^2}{\rho^4(223398 + 411642 \rho^2 + 33642 \rho^4 + 217350 \rho^6 + 18900 \rho^8)}\geq\frac{1}{14^2}
$$
for all $\rho\in(0,1)$. By the condition \eqref{eq:alg2:condition_stepsize_grad_dom} on $\eta L$, we see that $\|A\|\leq 1-(1-\rho^2)^2/21$. Then, since
$$
\begin{aligned}
\frac{8n \eta^3 L^2\rho^2(1+2\rho^2)}{3(1-\rho^2)}
=\ &
\frac{8n \eta \rho^2(1+2\rho^2)}{3}
\cdot\frac{\alpha^2\theta^{2/3}(1-\rho^2)^3}{196} \\
\leq\ &
\frac{2n\alpha^2\theta^{2/3}\eta}{147} \max_{\rho\in[0,1]}\rho^2(1+2\rho^2)(1-\rho^2)^3
=
\frac{n\alpha^2\theta^{2/3}\eta}{6}
\cdot (1-\chi),
\end{aligned}
$$
where we denote
$$
\chi
:=1-\frac{4}{49}\max_{\rho\in[0,1]}\rho^2(1+2\rho^2)(1-\rho^2)^3\approx 0.9865,
$$
we get
$$
\begin{aligned}
\left\|\begin{bmatrix}
\frac{5\eta}{2\sqrt{57} L}
e_{\mathrm{g}}(t)\\
e_{\mathrm{c}}(t-1)
\end{bmatrix}\right\|
\leq\ &
\left(1-\frac{(1-\rho^2)^2}{21}\right)
\left\|\begin{bmatrix}
\frac{5\eta}{2\sqrt{57} L}
e_{\mathrm{g}}(t-1)\\
e_{\mathrm{c}}(t-2)
\end{bmatrix}\right\| \\
&+
\frac{n\alpha^2\theta^{2/3}\eta }{6}\cdot (1-\chi)\delta(t-2)
+\frac{5n\alpha\theta^{1/3}\rho^2(1+2\rho^2)(1-\rho^2)}{84}u_{t-1}^2 d,
\end{aligned}
$$
where the condition \eqref{eq:alg2:condition_stepsize_grad_dom} was used. Consequently, if we denote
$$
\sigma(t-1)
:=
\frac{2\sqrt{2}L}{n\alpha\theta^{1/3}\sqrt{1-\chi}}
\left\|
\begin{bmatrix}
\frac{5\eta}{2\sqrt{57} L}
e_{\mathrm{g}}(t)\\
e_{\mathrm{g}}(t-1)
\end{bmatrix}
\right\|,
$$
we get
$$
\sigma(t-1)
\leq
\left(1-\frac{(1-\rho^2)^2}{21}\right)
\sigma(t-2)
+
\frac{\sqrt{2}\alpha\theta^{1/3}\sqrt{1-\chi}}{3}
\eta L\cdot \delta(t-2)
+
\frac{5\sqrt{2} \rho^2(1+2\rho^2)(1-\rho^2)}{42\sqrt{1-\chi}}u_{t-1}^2 L d
.
$$
On the other hand, by Lemma \ref{lemma:alg2:basic_ineq} and the fact that $f$ is $\mu$-gradient dominated, we have
$$
\begin{aligned}
\delta(t-1)
\leq\ &
\left(1-\frac{2\eta\mu}{3}\right)\delta(t-2)
+\frac{4\eta L^2}{3n}e_{\mathrm{c}}(t-2)
+\frac{\eta L^2 u_{t-1}^2  d}{3} \\
\leq\ &
\left(1-\frac{2\eta\mu}{3}\right)\delta(t-2)
+\frac{\sqrt{2}\alpha\theta^{1/3}\sqrt{1-\chi}}{3}\eta L
\cdot\sigma(t-2)
+\frac{\eta L^2 u_{t-1}^2 d}{3}.
\end{aligned}
$$
Therefore
\begin{equation}\label{eq:temp_gradient_dom}
\begin{bmatrix}
\sigma(t-1) \\
\delta(t-1)
\end{bmatrix}
\leq
B
\begin{bmatrix}
\sigma(t-2) \\
\delta(t-2)
\end{bmatrix}
+\begin{bmatrix}
\frac{5\sqrt{2}\rho^2(1+2\rho^2)(1-\rho^2)}{14\sqrt{1-\chi}} \\
\eta L
\end{bmatrix} \frac{u_{t-1}^2 Ld}{3},
\end{equation}
where
$$
B:=
\begin{bmatrix}
1-\frac{1}{21}(1-\rho^2)^2
&
\frac{1}{3}\sqrt{2(1-\chi)}\alpha\theta^{1/3}\eta L
\\
\frac{1}{3}\sqrt{2(1-\chi)}\alpha\theta^{1/3}\eta L
& 1-\frac{2}{3}\eta\mu
\end{bmatrix}.
$$
Straightforward calculation shows that
$$
\begin{aligned}
\|B\|
=\ &
1-\frac{(1-\rho^2)^2}{42}
\left(1+\alpha\theta^{4/3}
-\sqrt{(1-\alpha\theta^{4/3})^2
+2(1-\chi)\alpha^4\theta^{4/3}}
\right) \\
\leq\ &
1-\frac{(1-\rho^2)^2}{42}
\left(1+\alpha\theta^{4/3}
-\sqrt{(1-\alpha\theta^{4/3})^2
+2(1-\chi)\alpha\theta^{4/3}}
\right) \\
=\ &
1-\frac{(1-\rho^2)^2}{42}
\left(1+\alpha\theta^{4/3}
-\sqrt{(1-\chi\alpha\theta^{4/3})^2
+(1-\chi^2)\alpha^2\theta^{8/3}}
\right)
\end{aligned}
$$
Since $x\mapsto \sqrt{(1-\chi x)^2+(1-\chi^2)x^2}$ is a convex function over $x\in[0,1]$, it can be shown that
$$
\sqrt{(1-\chi x)^2+(1-\chi^2)x^2}
\leq
1+(\sqrt{2(1-\chi)}-1)x,
$$
and so
$$
\|B\|
\leq 
1-\frac{(1-\rho^2)^2}{42}
\left(2-\sqrt{2(1-\chi)}\right)\alpha\theta^{4/3}
\leq 
1-\frac{(1-\rho^2)^2}{25}\alpha\theta^{4/3},
$$
where we used the fact that $2-\sqrt{2(1-\chi)}>\frac{42}{25}$. By \eqref{eq:temp_gradient_dom}, we then have
$$
\begin{aligned}
\left\|
\begin{bmatrix}
\sigma(t-1) \\
\delta(t-1)
\end{bmatrix}
\right\|
\leq\ &
\left(1-\frac{(1-\rho^2)^2}{25}\alpha\theta^{4/3}\right)
\left\|
\begin{bmatrix}
\sigma(t-2) \\
\delta(t-2)
\end{bmatrix}
\right\|
+\left\|
\begin{bmatrix}
\frac{5\sqrt{2}\rho^2(1+2\rho^2)(1-\rho^2)}{14\sqrt{1-\chi}} \\
\eta L
\end{bmatrix}
\right\|
\frac{u_{t-1}^2 L d}{3} \\
\leq\ &
\left(1-\frac{(1-\rho^2)^2}{25}\alpha\theta^{4/3}\right)
\left\|
\begin{bmatrix}
\sigma(t-2) \\
\delta(t-2)
\end{bmatrix}
\right\|
+5(1-\rho^2) u_{t-1}^2 L d,
\end{aligned}
$$
where we used $\sqrt{1-\chi}>1/9$ and
$$
\left\|
\begin{bmatrix}
\frac{5\sqrt{2}\rho^2(1+2\rho^2)(1-\rho^2)}{14\sqrt{1-\chi}} \\
\eta L
\end{bmatrix}
\right\|
\leq
\left\|
\begin{bmatrix}
\frac{135\sqrt{2}(1-\rho^2)}{14} \\
\frac{1-\rho^2}{14}
\end{bmatrix}
\right\|
<15(1-\rho^2).
$$
By induction we get
$$
\begin{aligned}
\left\|
\begin{bmatrix}
\sigma(t) \\
\delta(t)
\end{bmatrix}
\right\|
\leq\ &
\left(1-\frac{(1-\rho^2)^2}{25}\alpha\theta^{4/3}\right)^t
\left\|
\begin{bmatrix}
\sigma(0) \\
\delta(0)
\end{bmatrix}
\right\|
+5(1-\rho^2) L d
\sum_{\tau=0}^{t-1}
\left(1-\frac{(1-\rho^2)^2}{25}\alpha\theta^{4/3}\right)^{\tau} u^2_{t-\tau},
\end{aligned}
$$
which implies the bound on $f(\bar x(t))-f(x^\ast)$. The bound on $\frac{1}{n}\sum_{i=1}^n\|x^i(t)-\bar x(t)\|^2$ follows from
$$
\frac{n\alpha\theta^{1/3}\sqrt{1-\chi}}{2\sqrt{2}L}
\cdot 5(1-\rho^2)Ld
<
\frac{3n\alpha\theta^{1/3}}{10\sqrt{2}}(1-\rho^2)d
<\frac{3\eta Ld}{1-\rho^2}
$$
as $\sqrt{1-\chi}<3/25$. The bound on $\frac{1}{n}\sum_{i=1}^n \|s^i(t)-\nabla f(\bar x(t-1))\|^2$ follows from
$$
\begin{aligned}
& \frac{1}{n}\|s(t+1)-\mathbf{1}_n\otimes\bar \nabla f(\bar x(t))\|^2 \\
\leq\ &
\frac{3}{2n}\|s(t+1)-\mathbf{1}_n\otimes\bar g(t+1)\|^2
+3\|\bar g(t+1)-\nabla f(\bar x(t))\|^2 \\
\leq\ &
\frac{3}{2n}
e_{\mathrm{g}}(t+1)
+3\left(\frac{2L^2}{n}\|x(t)-\mathbf{1}_n\otimes\bar x(t))\|^2
+\frac{1}{2}u_{t+1}^2 L^2d\right) \\
\leq\ &
\left(\frac{3}{2n}\cdot\frac{10L}{3\eta}
+\frac{6L^2}{n}\right)\cdot \frac{n\alpha\theta^{1/3}\sqrt{1-\chi}}{2\sqrt{2}L}
\left(1-\frac{(1-\rho^2)^2}{25}\alpha\theta^{4/3}\right)^t
\left\|
\begin{bmatrix}
\sigma(0) \\
\delta(0)
\end{bmatrix}
\right\| \\
&+\frac{3}{2}u_{t+1}^2 L^2 d
+\left(\frac{3}{2n}\cdot\frac{10L}{3\eta}
+\frac{6L^2}{n}\right)
\cdot \frac{3\eta Ld}{1-\rho^2}\sum_{\tau=0}^{t-1}
\left(1-\frac{(1-\rho^2)^2}{25}\alpha\theta^{4/3}\right)^{\tau} u_{t-\tau}^2 \\
\leq\ &
\frac{18L}{5(1-\rho^2)^2}\left(1-\frac{(1-\rho^2)^2}{25}\alpha\theta^{4/3}\right)^t
\left\|
\begin{bmatrix}
\sigma(0) \\
\delta(0)
\end{bmatrix}
\right\|
+\frac{18 L^2d}{1-\rho^2}
\sum_{\tau=0}^{t}\left(1-\frac{(1-\rho^2)^2}{25}\alpha\theta^{4/3}\right)^{\tau} u_{t+1-\tau}^2,
\end{aligned}
$$
where we used \eqref{eq:alg2:temp_bound_1} in the second inequality.

\bibliographystyle{IEEEtran}
\bibliography{biblio_revised.bib}

\end{document}